\documentclass[10pt,leqno,twoside]{amsart}

\usepackage{enumerate}

\usepackage{amssymb}

\usepackage{amssymb,amsthm,amsmath,eucal,mathrsfs}
\usepackage{bm}

\setlength{\textwidth}{16cm}

\setlength{\textheight}{21.5cm}

\hoffset=-55pt

\usepackage{tikz, subfigure, xcolor} 

\usepackage{pgfplots}

\usepackage{cite}

\usepackage{amsmath,verbatim}

\usepackage{amsthm}

\usepackage{amssymb}

\usepackage{amsfonts}

\usepackage{dsfont}

\usepackage{hyperref}

\newtheorem{theorem}{Theorem}[section]

\newtheorem{lemma}[theorem]{Lemma}

\newtheorem{proposition}[theorem]{Proposition}

\theoremstyle{definition}

\newtheorem{remark}[theorem]{Remark}

\numberwithin{equation}{section}









\renewcommand{\Im}{{\ensuremath{\mathrm{Im\,}}}} 



\providecommand{\norm}[1]{\left\Vert#1\right\Vert} 











\newcommand{\1}{\mathbb{I}}





\newcommand\restr[2]{{
  \left.\kern-\nulldelimiterspace 
  #1 
  \vphantom{\big|} 
  \right|_{#2} 
  }}


\usepackage{eucal}

\title[Equivalent media]{The equivalent media generated by bubbles of high contrasts:\\
Volumetric metamaterials and metasurfaces}
\author[Ammari, Challa, Choudhury, Sini]{Habib Ammari $^* $, Durga Prasad Challa $^{**} $, Anupam Pal Choudhury $^{\dag} $, Mourad Sini$^{\ddag} $}

\subjclass[2010]{35R30, 35C20}
\keywords{bubbly media, Foldy-Lax approximation, effective medium theory, metamaterials, metascreen.\\
$^* $ Department of Mathematics, ETH Z\"urich, R\"amistrasse 101, CH-8092 Z\"urich, Switzerland. E-mail: habib.ammari@math.ethz.ch\\
$^{**} $ Faculty of Mathematics, Indian Institute of Technology Tirupati, Tirupati, India. Email: chsmdp@iittp.ac.in. This author was partially supported by the Austrian Science Fund (FWF): P28971-N32 and DST SERB MATRICS (Mathematical Research Impact Centric Support) MTR/2017/000539.\\
$^{\dag}$ Indian Institute of Technology (IIT) Indore, Indore 453552, India. Email: anupampcmath@gmail.com. A part of this work was done when this author was a post-doctoral research scholar at RICAM and was supported by the Austrian Science Fund (FWF): P28971-N32.  \\ 
$^{\ddag}$ RICAM, Austrian Academy of Sciences,
Altenbergerstrasse 69, A-4040, Linz, Austria.
Email: mourad.sini@oeaw.ac.at. This author is partially supported by the Austrian Science Fund (FWF): P28971-N32}

\allowdisplaybreaks
\begin{document}
\begin{abstract}  
We deal with the point-interaction approximations for the acoustic wave fields generated by a cluster of highly contrasted bubbles for a wide range of densities and bulk moduli contrasts.
 We derive the equivalent fields when the cluster of bubbles is appropriately distributed (but not necessarily periodically) in a bounded domain $\Omega$ of $\mathbb{R}^3$. 
 We handle two situations. 
 \begin{enumerate}
 \item In the first one, we distribute the bubbles to occupy a $3$ dimensional domain. For this case, we show that the equivalent speed of propagation changes sign when the medium is excited with
 frequencies smaller or larger than (but not necessarily close to) the Minnaert resonance. As a consequence, this medium behaves as a reflective or absorbing depending on whether the used frequency is smaller or larger than this resonance. 
 In addition, if the used frequency is extremely close to this resonance, for a cluster of bubbles with density above a certain threshold, then the medium behaves as a \textquoteleft wall', i.e. allowing no incident sound to penetrate.
 
 \item In the second one, we distribute the bubbles to occupy a $2$ dimensional (open or closed) surface, not necessarily flat. For this case, we show that the equivalent medium is modeled by a Dirac potential supported on
 that surface. The sign of the surface potential changes for frequencies smaller or larger than the Minnaert resonance, i.e. it behaves as a smart metasurface reducing or amplifying the transmitted sound across it. As in the $3$D case, if the used frequency is extremely close to 
 this resonance, for a cluster of bubbles with density above an appropriate threshold, then the surface  allows no incident sound to be transmitted across the surface, i.e. it behaves as a white screen. 
 \end{enumerate}   
 
\end{abstract}
\maketitle
\section{Introduction}
Recently, there has been a great interest in developing materials to control waves (as acoustic, electromagnetic or elastic waves) in unprecedented ways. For this purpose, the engineers provided us with clever designs 
to create such \textquoteleft artificial' materials with unusual responses. One of such designs is based on inclusions arranged in specific ways so that when the mentioned waves  interact with them new properties emerge. 
The used inclusions are made of usual materials but they are of smaller scales, usually at the micro or nano scales, enjoying high contrasts as compared to the background medium where they are embedded. These two features 
in choosing and using the inclusions, namely the way of arranging them and their proper scales, are crucial. 
Precisely, the choice of the proper scaled and contrasted inclusions allow us to create (subwavelength) resonances which are extremely close to the real line. The arrangement of such inclusions provide effective macroscopic
media with changing behaviors while excited with frequencies close to the mentioned resonances. In addition, we can distribute the inclusions to fill-in volumetric ($3$ D) domains, $2$ D surfaces or $1$ D curves. 
These combinations provide us with, respectively, volumetric metamaterials, metasurfaces and metawires.

In this paper, we deal with acoustic waves generated by micro-scaled bubbles having highly contrasted bulk moduli (and densities). With those scales, Minnaert resonances occur, see \cite{Habib-Minnaert} and \cite{ACCS}. 
Distributing such bubbles in ($3$ D) domains and in $2$ D surfaces, we confirm, in particular, the possibility to generate volumetric metamaterials and (non-necessarily flat) open or closed metasurfaces with interesting
properties. Formal derivation of the effective medium in $3$ D domains are derived in \cite{Papanicoulaou-2} and a justification is provided in \cite{Habib-bubbles} for frequencies near the Minnaert resonance. 
In \cite{H-F-G-L-Z}, the effective medium corresponding to a periodic distribution of the bubbles in a flat and infinite $2$ D surface (a plane) is studied for frequencies near the Minnaert resonance. Compared 
to these results, here we deal with general shaped (open or closed) surfaces (where no periodicity is needed) and for any fixed frequency. In addition, for both $3$ D and $2$ D domains, we consider all the three 
regimes on the denseness of bubbles. In the low regime, the cluster of bubbles has no effect, i.e. there is no reflection. In the medium regime, the equivalent medium allows both reflection and transmission of the wave. 
In this regime, we can control the amount of reflection and transmission and hence increase or reduce it at our will. In the third regime (i.e. high regime), the equivalent medium allows no transmission, i.e. 
it behaves as a wall for the $3$ D case or a white screen for the $2$ D case. More detailed properties of such designs are provided later after stating the results in section \ref{Main-Results}.  

\bigskip

The rest of the paper is organized as follows. In section \ref{Background}, we describe the mathematical background and fix some notations and in section \ref{Main-Results}, we state and discuss the main results with 
appreciable details. In section \ref{Formal-Arguments}, we provide the formal arguments with the key ideas behind the proofs and then in section \ref{Section-volume} and section \ref{Section-surface}, we give the full details
 to justify the main results which are Theorem \ref{Main-theorem-volumes} and Theorem \ref{Main-theorem-surfaces} respectively. 
Finally, in an appendix, we gather few technical results used in the proof of the main results.  

\section{ Background and notations }\label{Background}
In this section, we discuss about the mathematical model and recall some results related to point-interaction approximations for the acoustic wave fields, from \cite{ACCS}, that we shall use in this work.
\bigskip

Let us denote by  $\{D_{s}\}_{s=1}^{M}$ a finite collection of small bubbles in $\mathbb{R}^{3}$ of the form $D_{s}:= \delta B_m+z_{s}$, where
 $B_m$ is an open, bounded (with Lipschitz boundary), simply connected set in $\mathbb{R}^{3}$ containing the origin, and $z_{s}$ specify the locations of the 
bubble. The parameter $\delta > 0 $ characterizes the smallness assumption on the bubbles.  
Let us consider piecewise constant densities of the form 
\begin{equation}
\rho_{\delta}(x)=\begin{cases}
                 \rho_{0}, \ x \in \mathbb{R}^{3}\diagdown \overline{\cup_{l=1}^{M} D_{l}},\\
                 \rho_{s}, \ x \in D_{s}, \ s=1,...,M, \end{cases}
\label{model1}
\end{equation}
%
%
and piecewise constant bulk modulus in the analogous form
\begin{equation}
k_{\delta}(x)=\begin{cases}
                k_{0}, \ x \in \mathbb{R}^{3}\diagdown \overline{\cup_{l=1}^{M} D_{l}},\\
                k_{s}, \ x \in D_{s}, \ s=1,...,M,
               \end{cases}
\label{model2}
\end{equation}
where $\rho_{0},\rho_{s}, k_{0},k_{s}$ are positive constants. Thus $\rho_{0}$ and $k_{0}$ denote the density and bulk modulus of the 
background medium and $\rho_{s}$ and $k_{s}$ denote the density and bulk modulus of the bubbles respectively.\\
The mathematical model for describing the acoustic scattering  by the collection of small bubbles $D_s, s=1, ..., M$ is as follows:
\begin{equation}
 \begin{cases}
  \Delta u + \kappa_{0}^{2} u =0 \ \text{in} \ \mathbb{R}^{3}\diagdown \overline{\cup_{l=1}^{M} D_{l}}, \\
   \Delta u + \kappa_{s}^{2} u =0 \ \text{in}\ D_{s},\ s=1,...,M,\\
   \left.u \right\vert_{-}-\left.u \right\vert_{+}=0, \ \text{on} \ \partial D_{s},\ s=1,...,M,\\
   \frac{1}{\rho_{s}}\left.\frac{\partial u}{\partial \nu^{s}} \right\vert_{-}-\frac{1}{\rho_{0}} \left.\frac{\partial u}{\partial \nu^{s}} \right\vert_{+} =0\ \text{on} \ \partial D_{s},\ s=1,...,M,\\
   \frac{\partial u^{s}}{\partial \vert x \vert}-i \kappa_{0} u^{s} =o (\frac{1}{\vert x \vert}) , \ \vert x \vert \rightarrow \infty \ (S.R.C),
 \end{cases}
\label{model4}
\end{equation}
where $\omega > 0$ is a given frequency and $\kappa_{0}^{2}=\omega^{2} \frac{\rho_0}{k_0}$ and $\kappa_{s}^{2}=\omega^{2} \frac{\rho_s}{k_s}$. Here the total field $u:=u^{I}+u^{s}$, where $u^{I}$ denotes the incident field (we restrict to
plane incident waves) and $u^{s}$ denotes the scattered waves. 
We note that the scattered field $u^s $ can be written as 
\[u^{s}(x,\theta)= \frac{e^{i \kappa_{0} \vert x \vert}}{\vert x \vert} u^{\infty}(\hat{x}, \theta)  + O(\vert x \vert^{-2}), \ \vert x \vert \rightarrow \infty, \]
where $\hat{x}:=\frac{x}{\vert x \vert} $ and $u^{\infty}(\hat{x}, \theta) $ denotes the far-field pattern corresponding to the unit vectors $\hat{x},\theta $, i.e. the incident and propagation directions respectively.                         
\bigskip

To describe the collection of small bubbles, we use the following parameters:
\begin{equation}\label{scales-bubbles}
a:=\max\limits_{1\leq m\leq M } diam (D_m) ~~\big[=\delta \max\limits_{1\leq m\leq M } diam (B_m)\big], \mbox{ and } 
d:=\min\limits_{\substack{m\neq j\\1\leq m,j\leq M }} d_{mj},
\mbox{ where }  \,d_{mj}:=dist(D_m, D_j).
\end{equation}

The distribution of the small bubbles is modeled as follows:
\begin{enumerate}
\item Given $\omega_{max}$, we take $\omega \in (0, \omega_{max}]$ and $a$ such that $\omega_{max}\; a <<1$.

\item the number $M~:=~M(a)~:=~\mathcal{O}(a^{-s})\leq M_{max} a^{-s}$ with a given positive constant $M_{max}$,\\
\item the minimum distance $d~:=~d(a)~\approx ~a^t$, i.e. $d_{min} a^t \leq d(a) \leq d_{max}a^t $, with given positive constants $d_{min}$ and $d_{max}$, \\
\item the coefficients $k_m, \rho_m$ satisfy the conditions: 
\begin{equation}
 \frac{\rho_m}{\rho_0}= C_{\rho} a^{\beta},\ \beta>0, (\mbox{ i.e. } \frac{\rho_m}{\rho_0} \ll 1),
 \label{constant}
 \end{equation}
 keeping the relative speed of propagation uniformly bounded, i.e. 
 \begin{equation}\label{speeds}
 \frac{\kappa^2_{m}}{\kappa^2_0}:= \frac{\rho_{m}k_{0}}{k_{m} \rho_{0}}=\frac{\rho_{m}}{\rho_{0}}\frac{k_{0}}{k_{m}} \sim 1, \mbox{ as } a \ll 1.
 \end{equation}
\end{enumerate}
Here the real numbers $s$, $t$ and $\beta$ are assumed to be non negative.

\bigskip

To state our results, let us first denote $\hat{A_{l}}:=\frac{1}{\vert{\partial D_l}\vert}\int_{ \partial D_l}\int_{ \partial D_l}\frac{(s-s')}{\vert{s-s'}\vert} \cdot\nu_{s'} \,ds'\,ds$ and define $\omega_{M}^2:=\frac{8\pi\; k_l}{(\rho_l-\rho_0) \hat{A_{l}}}.$ The constant $\omega_M$ is an approximation of the real part of the Minnaert resonance created by each bubble, see \cite{Habib-Minnaert, ACCS}. To simplify the exposition of our results in our ongoing work, all the bubbles are assumed to be identical in shape, and have the same density and bulk modulus. In particular they have the same Minneart resonance. The starting point of our work is the following point-approximation expansion of the acoustic scattered waves generated by the above cluster of bubbles, see \cite{ACCS} for more details. \\
Let \[\Phi_{\kappa}(x,y):= \frac{e^{i \kappa \vert x-y \vert}}{4\pi \vert x-y \vert}, \ \text{for} \ x,y \in \mathbb{R}^3, \] 
denote the fundamental solution of the Helmholtz equation in three dimensions with a fixed wave number $\kappa $. We recall that the farfield pattern, corresponding to $ \Phi_k$, is given by $ \Phi_{\kappa}^\infty(\hat{x}, y):= e^{-i \kappa \hat{x} \cdot y}$, where $\hat{x}=\frac{x}{\vert x \vert} $.\\

\begin{theorem}\label{Main-theorem}(see \cite{ACCS})
Under the conditions $0\leq t < \frac{1}{2}, \; 0 \leq s \leq \frac{3}{2},$ $\beta=1+\gamma,$ with $0 \leq \gamma \leq 1$ and $s+\gamma \leq 2$ we have the following expansions.
\begin{enumerate}
\item Assume that  $\gamma<1 $ or $\gamma=1$ with $\omega$ being away from $\omega_M$, i.e. $\vert 1-\frac{\omega^2_M}{\omega^2}\vert \geq l_0$ with a positive constant $l_0$ independent of $a, a \ll 1$. 
Then
\begin{equation}\label{Away-from-resonance}
 u^\infty(\hat{x}, \theta)= \sum^M_{m=1}\Phi_{\kappa_0}^\infty(\hat{x}, z_m)Q_m +\mathcal{O}(a^{2-s}+a^{3-\gamma-2t-s})
\end{equation}
under the additional condition on $t$: $t\geq \frac{s}{3}.$

\item Assume that $\gamma =1$ and the frequency $\omega$ is near $\omega_M$, i.e. $1-\frac{\omega^2_M}{\omega^2}=l_Ma^{h_1},\; h_1> 0$. Then

\begin{equation}\label{Near-resonance}
 u^\infty(\hat{x}, \theta)= \sum^M_{m=1}\Phi_{\kappa_0}^\infty(\hat{x}, z_m) Q_m+\mathcal{O}(a^{2-s-2h_{1}}+a^{3-2t-2s-2h_{1}})
\end{equation}
under the additional conditions on $t$ and $h_1$ given by
\bigskip

\begin{itemize}
 \item $t\geq \frac{s}{3}$ and $s+h_1\leq 1$ if $l_M<0$. 
\bigskip

\item $t\geq \frac{s}{3} $, $t+h_1 \leq 1 $, $s+h_1<\min \{\frac{3}{2}-t, 2-h_1 \} $, if $l_M>0$.

\end{itemize}
\end{enumerate}
\bigskip

The vector $(Q_m)^M_{m=1}$ is the solution of the following algebraic system
\begin{equation}\label{LAS-1-theorem}
 {\bf{C_m}}^{-1} Q_m +\sum_{l\neq m}\Phi_{\kappa_0}(z_l, z_m)Q_l=-u^I(z_m),\; m=1, ..., M,
\end{equation}
with
\begin{equation}
 {\bf{C_m}}:=\frac{\kappa_m^2 \vert{ D_m}\vert}{\frac{\rho_{m}}{\rho_m-\rho_{0}}-\frac{1}{8\pi}\kappa_m^2{\hat{A}_m}}\; \mbox{ and } 
 \hat{A}_m:=\frac{1}{\vert{\partial D_m}\vert}\int_{ \partial D_m}\int_{ \partial D_m}\frac{(s-s')}{\vert{s-s'}\vert} \cdot\nu_{s'} \,ds'\,ds.
 \label{Cm}
\end{equation}

 \noindent The algebraic system (\ref{LAS-1-theorem}) is invertible under one of the following conditions:
 \bigskip
 
 \begin{enumerate}
  \item The coefficients ${\bf{C_m}}$ are negative and $\max \vert {\bf{C_m}}\vert =O(a^s)$, as $a \ll 1$. This condition holds if
  \bigskip
  
  \begin{enumerate}
  \item $\gamma<1$ or $\gamma=1$ with $\omega$ being away from $\omega_M$ and we have the relations $ 0 \leq \gamma \leq 1, \; \gamma+s\leq 2$ and $ \frac{s}{3}\leq t \leq 1$.
  \bigskip
  
  \item $\gamma=1$ and the frequency $\omega$ approaches $\omega_M$ from below ($l_M<0$), i.e. $\omega < \omega_M$, and we have the relations $ \frac{s}{3} \leq t \leq 1$ and $1-h_{1}-s \geq 0 $.
\end{enumerate}
\bigskip

 \item The coefficients ${\bf{C_m}}$ are positive and one of the following conditions is fulfilled
 \begin{enumerate}
 \item $\max \vert {\bf{C_m}}\vert =O(a^t)$, as $a \ll 1$, and $\tau:=\min_{1\leq j,m \leq M,\ j\neq m} cos(\kappa_{0}\vert z_{m}-z_{j} \vert)>0$. 
 The first conditions holds if $\gamma=1$, the frequency $\omega$ approaches $\omega_M$ from above ($l_M>0$), i.e. $\omega > \omega_M$, and we have the relations $0\leq t \leq 1-h_{1} $ and $s \leq 1 $.
 \bigskip

 \item $\max \vert {\bf{C_m}}\vert =O(a^s)$, as $a\ll 1$. This condition holds if $\gamma=1$ and the frequency $\omega$ approaches $\omega_M$ from above ($l_M>0$), 
 i.e. $\omega > \omega_M$, and we have the relations $ \frac{s}{3} \leq t \leq 1$ and $1-h_{1}-s \geq 0 $.
 \end{enumerate}
 \end{enumerate}
\end{theorem}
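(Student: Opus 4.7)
The plan is to represent the scattered field through single-layer potentials on the bubble boundaries, reduce the transmission problem to a coupled system of boundary integral equations, and then exploit the smallness parameter $\delta$ together with the density contrast $\rho_m/\rho_0 = C_\rho a^\beta$ to extract a point-interaction (Foldy--Lax) system for the total charges $Q_m := \int_{\partial D_m}\phi_m\,ds$. Concretely, one writes
\begin{equation*}
u = u^{I} + \sum_{m=1}^{M}\mathcal{S}_{\kappa_0}^{\partial D_m}[\phi_m^{0}] \ \text{outside the bubbles}, \qquad u = \mathcal{S}_{\kappa_m}^{\partial D_m}[\phi_m^{m}] \ \text{in } D_m,
\end{equation*}
and imposes the two transmission conditions to obtain a $2M\times 2M$ boundary integral system for the densities $(\phi_m^{0},\phi_m^{m})_{m=1}^{M}$.

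The first step is a local asymptotic analysis at a single bubble. After the rescaling $y\mapsto \delta y + z_m$, the operators $\mathcal{S}_{\kappa_j}^{\partial D_m}$ and the associated Neumann--Poincar\'e operator admit expansions in powers of $\delta$ whose leading terms are the Laplacian counterparts on $\partial B_m$. Integrating the transmission equations against the constant function and isolating the $\kappa_m^2$-correction of $\mathcal{S}_{\kappa_m}^{\partial D_m}$ produces a scalar balance in which $\rho_m/(\rho_m-\rho_0)$ multiplies the leading mean term while the subleading contribution contributes exactly the geometric constant $\hat A_m$. Combining these with $\kappa_m^2|D_m|$ delivers the Minnaert coefficient ${\bf C}_m$ and identifies $\omega_M$ as the zero of its denominator. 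The cross-bubble coupling then enters through the evaluations $\Phi_{\kappa_0}(z_l,z_m)Q_l$ obtained by Taylor-expanding each exterior single-layer around $z_m$, with remainder controlled by $\kappa_0\delta$ on each interaction.

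These per-bubble estimates are propagated to the full cluster. With $M=O(a^{-s})$, $d\approx a^{t}$, and $|{\bf C}_m|=O(a^{s})$ away from resonance, the accumulated error yields the rate $a^{2-s}+a^{3-\gamma-2t-s}$; near the Minnaert frequency the denominator of ${\bf C}_m$ is only of order $a^{h_1}$, which is the source of the amplification factor $a^{-h_1}$ appearing in \eqref{Near-resonance}. The passage from the boundary integral solution to the far-field via $\Phi_{\kappa_0}^{\infty}(\hat x,z_m) = e^{-i\kappa_0\hat x\cdot z_m}$ costs one further Taylor remainder but preserves the stated rates.

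Finally, invertibility of \eqref{LAS-1-theorem}, written as $(\diag({\bf C}_m^{-1})+B)Q = -U^{I}$ with $B_{ml}=\Phi_{\kappa_0}(z_l,z_m)$ and $B_{mm}=0$, is handled case by case. When the ${\bf C}_m$ are negative with $|{\bf C}_m^{-1}|\gtrsim a^{-s}$, a shell decomposition of the off-diagonal row sums using $d\approx a^{t}$ combined with $s/3\leq t$ yields strict diagonal dominance of the real part. When the ${\bf C}_m$ are positive, diagonal dominance fails and one argues instead through the interplay between the real part of the matrix, controlled by $\min_{l\ne m}\cos(\kappa_0|z_m-z_l|)\geq\tau>0$, and the positive semidefinite kernel $\sin(\kappa_0|z_m-z_l|)/(4\pi|z_m-z_l|)$ governing the imaginary part via Bochner's theorem. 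The main obstacle throughout is keeping the per-bubble expansion, the cluster accumulation, and the resonance amplification $a^{-h_1}$ mutually compatible, which is precisely what forces the sharp algebraic constraints among $s,t,\gamma,h_1$ stated in the theorem.
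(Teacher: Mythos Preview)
The paper does not prove this theorem; it is quoted verbatim from \cite{ACCS} and used as the starting point for the effective-medium analysis in Sections~\ref{Section-volume} and~\ref{Section-surface}. So there is no in-paper proof to compare your outline against.

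That said, your sketch is a faithful high-level summary of the strategy carried out in \cite{ACCS}: single-layer representation on each $\partial D_m$, rescaling to $\partial B_m$ to extract the Laplacian leading terms, integrating the transmission system against constants to isolate the Minnaert denominator and hence ${\bf C}_m$, Taylor-expanding the cross terms $\Phi_{\kappa_0}(\cdot,z_m)$ to produce the Foldy--Lax coupling, and then a shell/layer counting argument to accumulate the per-bubble remainders into the stated powers of $a$. The invertibility split you describe (diagonal dominance when ${\bf C}_m<0$; the $\cos$--$\sin$ splitting with Bochner positivity of the sinc kernel when ${\bf C}_m>0$) is also the mechanism used there.

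What your outline does not yet supply, and what constitutes the bulk of the work in \cite{ACCS}, is the quantitative bookkeeping: uniform-in-$M$ operator bounds for the rescaled boundary integral system, control of the higher-order terms in the $\delta$-expansion of $\mathcal{S}_{\kappa_m}$ and $\mathcal{K}^*_{\kappa_m}$ that feed into the remainder, and the precise layer-counting estimates (of the type $\sum_n [(2n+1)^3-(2n-1)^3]\,n^{-1}d^{-1}$) that convert the local errors into the global exponents $2-s$, $3-\gamma-2t-s$, and their near-resonance counterparts. Your paragraph on error accumulation asserts the rates rather than deriving them; a referee would ask you to exhibit at least one of these sums explicitly and to track how the $a^{-h_1}$ amplification propagates through the a priori bound on $\sum_m|Q_m|$ obtained from the inverted algebraic system.
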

\bigskip

From (\ref{Away-from-resonance}), (\ref{Near-resonance}) and (\ref{LAS-1-theorem}), we see that the knowledge of the parameters $\mathbf{C}_m$'s provides approximation formulas to evaluate the scattered waves. 
We call these parameters the scattering coefficients. As we will see in this work, the scales and signs of these coefficients provide the kind of properties of the equivalent media generated by the bubbles. 
Next, we describe briefly these scales and sign properties.

\subsection*{The scaling and sign of the scattering coefficients $\mathbf{C}_m $'s}\label{scaling}
We describe the scaling and sign of $\mathbf{C}_m$ in the regimes to be considered in the following sections. \\
\begin{itemize}
\item If $\gamma<1 $ or $\gamma=1 $ and the frequency $\omega $ is away from the resonance, the scaling of $\mathbf{C}_m $ is as follows. From \eqref{Cm}, we see that
\begin{equation}
\mathbf{C}_{m}=\overline{\mathbf{C}}_m \cdot \frac{a^3}{a^{1+\gamma}-a^2}=\overline{\mathbf{C}}_m \cdot \frac{a^{2-\gamma}}{1-a^{1-\gamma}}=\overline{\mathbf{C}}_m \cdot a^{2-\gamma} \  ,
\notag
\end{equation}
where $\overline{\mathbf{C}}_m=\mathcal{O}(1) $ as $a \rightarrow 0 $.\\
When $\gamma <1 $, the term $\frac{\rho_m}{\rho_m-\rho_0} $ in \eqref{Cm} dominates and since it has a negative sign, it follows that $\mathbf{C}_m<0 $. In the case of $\gamma=1 $ and the frequency being away from the resonance, \[\mathbf{C}_m \gtrless 0  \ \text{if and only if}\ \omega \gtrless \omega_{M} .\]
\item Next we note that  the Minnaert resonance $\omega_{M}^{2} $ is of the form
\begin{equation}
\begin{aligned}
\omega_{M}^{2}= \frac{8\pi k_l}{(\rho_l-\rho_0) \hat{A}_{l}} 
&= \underbrace{-8\pi \frac{k_l}{\rho_l}  \cdot \frac{1}{\hat{A}_{l} \frac{\rho_0}{\rho_l}}}_{\sim 1}-\underbrace{8\pi \frac{k_l}{\rho_l}  \cdot \frac{1}{\hat{A}_{l} \frac{\rho_0}{\rho_l}}\frac{\rho_l}{\rho_0}}_{\mathcal{O}(a^2)}+\dots ,
\end{aligned}
\notag
\end{equation}
and when the frequency $\omega $ is near the resonance $\omega_{M}$, that is, \[1-\frac{\omega^2_M}{\omega^2}=l_Ma^{h_1},\; 0<h_1\leq 1, \ l_M \neq 0  ,\] using \eqref{Cm} we can express $\mathbf{C}_m $ in the form 
\[\mathbf{C}_m=-\frac{8 \pi \vert D_m \vert}{l_{M} a^{h_1} \hat{A}_{m}} .\] 
We set
 \begin{equation}\label{dominant-M-resonance}
\overline{\omega}^2_M:=-8\pi\frac{k_m}{\rho_0 \overline{\tilde{A}}_m},
\end{equation} where $\overline{\hat{A}}_m:=\frac{1}{\vert{\partial B_m}\vert}\int_{ \partial B_m}\int_{ \partial B_m}\frac{(s-s')}{\vert{s-s'}\vert} \cdot\nu_{s'} \,ds'\,ds$, then we can write $\mathbf{C}_m$ as
\[\mathbf{C}_m=\overline{\omega}^2_M \frac{\vert B_m\vert \rho_0}{l_M k_m}a^{1-h_1}\]
and set 
\begin{equation}\label{bar-C-m}
\overline{\mathbf{C}}_m:=\overline{\omega}^2_M \frac{\vert B_m\vert}{l_M}\frac{ \rho_0}{k_m}.
\end{equation}

Since $\hat{A}_{m} $ is negative, the sign of $\mathbf{C}_m $, therefore, is the same as $l_{M} $, that is, \[\mathbf{C}_m \gtrless 0 \ \text{if and only if}\ l_{M} \gtrless 0 .\] From the above expression, it also follows that $\mathbf{C}_m= \overline{\mathbf{C}}_{m} a^{1-h_1}$, where $\overline{\mathbf{C}}_m=\mathcal{O}(1) $ as $a \rightarrow 0 $.
\end{itemize}
\bigskip

As we assumed the bubbles to be identical in shape, and have the same density and bulk modulus, in what follows, we shall assume $\mathbf{C}_{m} $ to be same (and equal to $\mathbf{C} $) for all $m=1, \dots, M $ and the corresponding $ \overline{\mathbf{C}}_{m} $  will be denoted by $\overline{\mathbf{C}} $. For $m=1, \dots, M $, we shall also denote the quantities 
$$B_m, \rho_m, k_m, \kappa_m, \hat{A}_m $$ by $$ B, \rho, k, \kappa, \hat{A} $$
 respectively. 
From appendix \ref{scale}, we can further observe that $\overline{\mathbf{C}}  $ can be written as 
\[\overline{\mathbf{C}} = 
\begin{cases}
\overline{\mathbf{C}}_{lead}+\mathcal{O}\left( a^{1-\gamma}\right),\  &\text{if}\  \gamma<1, \\
\overline{\mathbf{C}}_{lead}+\mathcal{O}\left( a^2 \right), \  &\text{if}\  \gamma=1 \ \text{and $\omega $ is away from $\omega_M $},
\end{cases}\]
where
\begin{equation}
\overline{\mathbf{C}}_{lead}=
\begin{cases}
-\kappa^{2} \vert B \vert C^{-1}_{\rho}=-\omega^2\vert B \vert\frac{\rho_0}{k} ,\  &\text{if}\  \gamma<1, \\
-\kappa^{2} \vert B  \vert C^{-1}_{\rho} \left[1+\frac{1}{8\pi} \kappa^{2} \overline{\hat{A}} C^{-1}_{\rho} \right]^{-1}=-\omega^2 \frac{\vert B \vert}{1-\frac{\omega^2}{\overline{\omega}_{M}^2}}\frac{\rho_0}{k}, \  &\text{if}\  \gamma=1 \ \text{and $\omega $ is away from $\omega_M $}.
\end{cases}
\notag
\end{equation}

Finally, from the above approximation of $\omega_M$, we observe that 
\begin{equation}
\omega^2_M=\overline{\omega}^2_M+\mathcal{O}(a^2). 
\end{equation}
\bigskip

Based on Theorem \ref{Main-theorem} and the discussion right after, our goal is to describe and quantify the dominating fields related to the approximations (\ref{Away-from-resonance}) and (\ref{Near-resonance})
 in appropriate regimes related to the distributions of the cluster of bubbles.  
\bigskip

\section{Main results}\label{Main-Results}
In this section, we state the main results of our work. For this, we divide it into three subsections. In the first one, we describe the regimes under which our cluster have no effect on the background media, i.e.
the scattered waves are not affected by the cluster when the radius $a$ becomes small. This is done regardless on how the cluster is distributed.
In the second subsection, we distribute the small bubbles in volumetric sets and describe the regimes under which we can have responses.
In the third subsection, we distribute the small bubbles on (close or open) surfaces. In these two subsections, we describe the equivalent dominating fields in the different regimes and discuss
the relevance of these results.

\subsection{The case when the bubbles have no effect on the background media}
We have seen in \eqref{Away-from-resonance} and \eqref{Near-resonance} that in suitable regimes, the far field can be expressed as \[u^\infty(\hat{x}, \theta)= \sum^M_{m=1}\Phi_{\kappa_0}^\infty(\hat{x}, z_m)Q_m + o(1), \ a \rightarrow 0. \] 
Also $\left\vert  \sum^M_{m=1}\Phi_{\kappa_0}^\infty(\hat{x}, z_m)Q_m \right\vert \leq M \max \vert \mathbf{C} \vert$. Therefore 
\begin{itemize}
\item if $\gamma<1 $ and $\gamma+s<2 $, we have $
M \max \vert \mathbf{C} \vert= \mathcal{O}(a^{-s} \cdot a^{2-\gamma})=\mathcal{O}(a^{2-\gamma-s}) =o(1), \ a \rightarrow 0$ and hence $u^\infty(\hat{x}, \theta) \rightarrow 0,\ \text{as}\ a \rightarrow 0$.

\item if $\gamma=1 $ and the frequency is away from the resonance with $s<1 $, we have 
$M \max \vert \mathbf{C}  \vert= \mathcal{O}(a^{-s} \cdot a^{2-\gamma} )=\mathcal{O}(a^{2-\gamma-s})=\mathcal{O}(a^{1-s})=o(1),\ a \rightarrow 0$ and hence $u^\infty(\hat{x}, \theta) \rightarrow 0,\ \text{as}\ a \rightarrow 0$.

\item if $\gamma=1 $ and the frequency is near the resonance with $s+h_{1}<1 $, we have
$M \max \vert \mathbf{C}  \vert= \mathcal{O}(a^{-s} \cdot a^{1-h_{1}})=\mathcal{O}(a^{1-h_{1}-s})=o(1),\ a \rightarrow 0$ and hence $u^\infty(\hat{x}, \theta) \rightarrow 0,\ \text{as}\ a \rightarrow 0$.

\end{itemize}

Thus in these cases, the bubbles have no effect on the background media, as $a<<1$, irrespective of their volumetric or surface distribution.
\subsection{Application to volumetric metamaterials}
Let us now discuss about the volumetric distribution of the bubbles (see also \cite{ACKS,CMS}).

Let $\Omega$ be a bounded domain, say of unit volume. 
We divide $\Omega$ into $[a^{-s}]$ subdomains $\Omega_m,\; m=1, ..., [a^{-s}]$, \footnote{As an example, taking $a:=N^{-\frac{1}{s}}$, with $N$ an integer and $N>>1$, we have $a<<1$ and $[a^{-s}]=N$.} such that each 
$\Omega_m$ contains $D_m$, i.e. $z_m \in \Omega_m$, and some of the other $D_j$'s. We assume that the number of bubbles in each $\Omega_m$, 
for $m=1, ..., [a^{-s}]$, is uniformly bounded in terms of $m$. To describe correctly this number of obstacles, let us be given a function 
 $K: \mathbb{R}^3\rightarrow \mathbb{R}$ as a non-negative, continuous and bounded potential. 
 Let each $\Omega_m$, $m\in \mathbb{N}$, be a cube of volume $a^s\frac{[K(z_m)+1]}{K(z_m)+1}$
 and contains $[K(z_m) +1]\; (=[K(z_m)] +1)$ bubbles \footnote{For a given real and positive number $x$, we denote by $[x]$, the unique integer $n$ 
 such that $n\leq x \leq n+1$, i.e. $n$ is the floor number.}. We set $K_{max}:=\sup_{z_m}[K(z_m) +1]$, hence $M=\sum^{[a^{-s}]}_{m=1}[K(z_m) +1]\leq K_{max}[a^{-s}]=O(a^{-s})$. The function $K$ describes then the local distribution of the holes, i.e. the number of bubbles in each $\Omega_m$ is fixed as $[K(z_m) +1]$.
 \bigskip

One way to do it, using a given function $K$, is to put the location $z_1 $ of the first bubble $D_1 $ in the 'center' of $\Omega$ and 
 then surround it with the cube $\Omega_1$ of volume $a^s \frac{[K(z_1)+1]}{K(z_1)+1}$. Inside $\Omega_1$, add the other $[K(z_1)]$ bubbles. Starting from $\Omega_1$, build up the other $\Omega_m$'s in a Rubik style respecting their volumes and the number of 
 bubbles included inside them using the function $K$ as discussed above.

 Few remarks are in order:
 \begin{enumerate}
  \item If we distribute the bubbles periodically, then $K\equiv 0$, the $\Omega_m$'s are identical (modulo a translation) and $\vert \Omega_m\vert=a^s$.
  \item $K$ can be identically zero but the bubbles can be distributed non-periodically. In general, if $K_{\mid_{\Omega}}$ is an integer, then also $\vert \Omega_m\vert=a^s\frac{[K(z_m)+1]}{K(z_m)+1} =a^s$ and the bubbles can be distributed non-periodically.
  \item Assume now that $K$ is, eventually, a variable function. Hence $\vert \Omega_m\vert =a^s \frac{[K(z_m)+1]}{K(z_m)+1}<a^s$ and 
  $Vol(\lim_{a \rightarrow 0}\cup^{[a^{-s}]}_{m=1}\Omega_m)
  =\int_{\Omega}\frac{[K(z)+1]}{K(z)+1}dz < \vert \Omega \vert$. Hence $\lim_{a \rightarrow 0}\cup^{[a^{-s}]}_{m=1}\Omega_m \subsetneq \Omega$. In this case, to $\Omega$ 
  we cut\footnote{Recall that we denoted by $[x]$ the floor number, i.e. the unique integer $n$ such that $n\leq x \leq n+1$.} a layer of volume $\vert \Omega \vert -\int_{\Omega}\frac{[K(z)+1]}{K(z)+1}dz$ and a constant depth starting from $\partial \Omega$. 
  We denote the resulting domain by $\Omega$ too. Note that this last domain has the same regularity as the former.
   
 \end{enumerate}

 \bigskip

 As $\Omega$ can have an arbitrary shape, the set of the cubes intersecting $\partial \Omega$ is not empty (unless if $\Omega$ has a simple shape as a cube). 
 Later in our analysis, we will need the estimate of the volume of this set. 
Since each $\Omega_j$ has volume of the order $a^s$, and then its maximum radius is of the order $a^{\frac{1}{3}s}$, then the intersecting surfaces with $\partial \Omega$ has an area of the order $a^{\frac{2}{3}s}$.
As the area of $\partial \Omega$ is of the order one, we conclude that the number of such cubes will not exceed the order $a^{-\frac{2}{3}s}$. Hence the volume of this set will not exceed the order 
$a^{-\frac{2}{3}s}a^{s}=a^{\frac{1}{3}s}$, as $a \rightarrow 0$.

\bigskip

\begin{theorem}\label{Main-theorem-volumes}\footnote{The error terms in \eqref{vol1}, \eqref{vol3} and \eqref{vol4} are given explicitly in terms of the corresponding used parameters in \eqref{gamma-small-vol}-\eqref{gamma-away-vol}, \eqref{gamma-near-vol} and \eqref{volume-blow} respectively.}
Let the bubbles be distributed in a bounded domain $\Omega $ according to a given non-negative, real-valued function $K \in C^{0,\lambda}(\overline{\Omega}),\; \lambda \in (0, 1) $, with their number $M:=M(a):=\mathcal{O}(a^{-s}) $ and their minimum distance $d:=d(a):=a^{t} $ as described above.\\
Let us consider the scattering problem
\begin{equation}
\begin{aligned}
&\left(\Delta + n(x) \right)u^{t}_{a}=0,\ \text{in} \ \mathbb{R}^{3},
\end{aligned}
\notag
\end{equation}
\begin{equation}
\begin{aligned}
&u^{t}_{a}=u^{s}_{a}+e^{i \kappa_{0} x \cdot \theta},
\end{aligned}
\notag
\end{equation}
\begin{equation}
\begin{aligned}
&\frac{\partial u^{s}_{a}}{\partial \vert x \vert}-i \kappa_{0} u^{s}_{a} =o\left(\frac{1}{\vert x \vert} \right), \ \vert x \vert\rightarrow \infty,
\end{aligned}
\notag
\end{equation}
\begin{itemize}
\item[(1)] Suppose the conditions $0\leq t<\frac{1}{2} $, $\beta=1+\gamma $ hold and 
\begin{itemize}
\item[(a)] either $\gamma<1, \gamma+s=2$, or \\
\item[(b)] $\gamma=1,\ s=1, $ and $\omega $ away from Minnaert resonance.
\end{itemize}
Then
\begin{equation}
\begin{aligned}
u^{\infty}(\hat{x},\theta)-u^{\infty}_{a}(\hat{x},\theta)=o(1), \mbox{ as } a<<1, \mbox{ uniformly in terms of } \hat{x} \mbox{ and } \theta.
\end{aligned}
\label{vol1}
\end{equation}
In the case $(a)$,
 \begin{equation}
 n:=\omega^2 \rho_0\left[k_{0}^{-1} + (K+1)\vert B \vert k^{-1} \chi_{\Omega} \right] 
 \label{neff-1}
 \end{equation} 
 and in the case $(b)$,
 \begin{equation} 
  n:=\omega^2 \rho_0\left[k_{0}^{-1}+(K+1) \frac{\vert B \vert}{1-\frac{\omega^2}{\overline{\omega}_{M}^2}} k^{-1} \chi_{\Omega} \right].
  \label{neff-2}
  \end{equation}
\item[(2)] Suppose that $\gamma=1$ and $\omega $ is near the Minnaert resonance,i.e. $1-\frac{\omega^2_M}{\omega^2}=l_M a^{h_1}$, with $l_M \neq 0$ and $h_1 \in (0, 1)$ where $s$ and $t$ satisfying the conditions \[\ s=1-h_1 \mbox{ and } \frac{s}{3} \leq t<\min\{1-h_1,\frac{1}{2}\}.\]
Then
\begin{equation}
\begin{aligned}
u^{\infty}(\hat{x},\theta)-u^{\infty}_{a}(\hat{x},\theta)=o(1), \mbox{ as } a<<1, \mbox{ uniformly in terms of } \hat{x} \mbox{ and } \theta.
\end{aligned}
\label{vol3}
\end{equation}
In this case, 
\begin{equation}
 n:= \overline{\omega}^2_M \rho_0 \left[k_{0}^{-1} -(K+1)\frac{\vert B \vert}{l_M}k^{-1} \chi_{\Omega} \right] .
 \label{neff-3}
 \end{equation}
\item[(3)] Suppose that $\gamma=1$ and $\omega $ is near the Minnaert resonance, i.e. $1-\frac{\omega^2_M}{\omega^2}=l_M a^{h_1}$, with $l_M>0$ and $h_1 $, $s$ and $t$ satisfy the conditions
\begin{equation}
\begin{aligned}
 0<1-h_{1}<s \leq 3t< \min \left\{\frac{3}{2}-t-h_{1}, \left(1+\frac{2\lambda}{15}\right)(1-h_1) \right\},\ h_1 < \frac{1}{6}.
\end{aligned}
\notag
\end{equation}
Then provided $\kappa_{0}^{2} $ is not an eigenvalue for the Dirichlet Laplacian in $\Omega $, we have
\begin{equation} 
\begin{aligned}
u^{\infty}(\hat{x},\theta)-u^{\infty}_{D}(\hat{x},\theta)=o(1), \mbox{ as } a<<1, \mbox{ uniformly in terms of } \hat{x} \mbox{ and } \theta,
\end{aligned}
\label{vol4}
\end{equation}
where $u^{\infty}_{D} $ is determined by the exterior Dirichlet problem
\begin{equation}
\begin{aligned}
\left(\Delta+\kappa_{0}^{2} \right)u^{t}_{D}&=0, \ \text{in}\ \mathbb{R}^{3}\setminus \overline{\Omega},
\end{aligned}
\notag
\end{equation}
\begin{equation}
\begin{aligned}
u^{t}_{D}:=u^{s}_{D}+e^{i\kappa_{0} x \cdot \theta}&=0,\ \text{on}\ \partial \Omega,
\end{aligned}
\notag
\end{equation}
\begin{equation}
\begin{aligned}
\frac{\partial u^{s}_{D}}{\partial \nu}-i\kappa_0 u^{s}_{D}&=o\left(\frac{1}{\vert x \vert} \right),\ \vert x \vert \rightarrow \infty.
\end{aligned}
\notag
\end{equation}
\end{itemize}
\end{theorem}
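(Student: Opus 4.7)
By Theorem \ref{Main-theorem}, the remainders $\mathcal{O}(a^{2-s}+a^{3-\gamma-2t-s})$ and $\mathcal{O}(a^{2-s-2h_1}+a^{3-2t-2s-2h_1})$ are $o(1)$ under the exponent constraints imposed in each part of Theorem \ref{Main-theorem-volumes}, so it suffices to analyse the limit as $a\to 0$ of the point-interaction farfield $\sum_{m=1}^M \Phi_{\kappa_0}^\infty(\hat{x},z_m)\,Q_m$, where $(Q_m)$ solves the Foldy--Lax system \eqref{LAS-1-theorem}. The plan is to identify this limit with a continuous equation: a Lippmann--Schwinger equation carrying the effective index $n$ in parts (1) and (2), and a first-kind volume-integral equation on $\Omega$ equivalent to the exterior Dirichlet problem in part (3).

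\textbf{Parts (1) and (2): moderate-density regimes.} Introduce $u_m:=-\mathbf{C}^{-1}Q_m$, so that \eqref{LAS-1-theorem} reads $u_m=u^I(z_m)-\mathbf{C}\sum_{l\neq m}\Phi_{\kappa_0}(z_l,z_m)\,u_l$. The exponents in (1)(a), (1)(b) and (2) are chosen precisely so that $\mathbf{C}\,a^{-s}$ converges to the $\mathcal{O}(1)$ constant $\overline{\mathbf{C}}_{lead}$ (respectively to $\overline{\mathbf{C}}$ of \eqref{bar-C-m} in the near-resonant case). Grouping the bubbles by the cubes $\Omega_m$, each containing $[K(z_m)+1]$ bubbles and of volume $|\Omega_m|\sim a^s$, one recognises the sum as a Riemann sum:
\[\mathbf{C}\sum_{l\neq m}\Phi_{\kappa_0}(z_l,z_m)\,u_l\;\longrightarrow\;\overline{\mathbf{C}}_{lead}\int_{\Omega}(K(y)+1)\,\Phi_{\kappa_0}(x,y)\,u(y)\,dy,\]
with the approximation error controlled via the H\"older regularity of $K$ and the minimum-distance bound $d\geq d_{\min}a^t$ that keeps the singular diagonal part of the kernel under control. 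Matching this limit with the standard Lippmann--Schwinger equation $u=u^I+\int_\Omega\Phi_{\kappa_0}(x,y)(n(y)-\kappa_0^2)u(y)\,dy$ forces $n-\kappa_0^2=-(K(y)+1)\,\overline{\mathbf{C}}_{lead}\,\chi_\Omega$, which after inserting the explicit expressions of $\overline{\mathbf{C}}_{lead}$ recalled just before section \ref{Main-Results} and the expansion $\omega_M^2=\overline{\omega}_M^2+\mathcal{O}(a^2)$ reproduces \eqref{neff-1}, \eqref{neff-2} and \eqref{neff-3}. Testing the limit equation against $\Phi_{\kappa_0}^\infty(\hat x,\cdot)$ then identifies $u_a^{\infty}(\hat x,\theta)$ with $u^\infty(\hat x,\theta)$ up to $o(1)$.

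\textbf{Part (3): strong-density regime.} Now $\mathbf{C}\,a^{-s}=\overline{\mathbf{C}}\,a^{1-h_1-s}\to\infty$, so the previous rescaling blows up. Instead, write $Q_m= a^s\,q(z_m)$ and plug into \eqref{LAS-1-theorem} to obtain
\[\mathbf{C}^{-1}a^s\,q(z_m)+\sum_{l\neq m}\Phi_{\kappa_0}(z_l,z_m)\,a^s q(z_l)=-u^I(z_m).\]
The diagonal term decays like $\mathbf{C}^{-1}a^s\sim a^{s-(1-h_1)}\to 0$, while the off-diagonal part is a Riemann sum converging to $\int_\Omega(K(y)+1)\Phi_{\kappa_0}(x,y)\,q(y)\,dy$. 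The limit is thus the first-kind equation
\[\int_\Omega(K(y)+1)\Phi_{\kappa_0}(x,y)\,q(y)\,dy=-u^I(x),\qquad x\in\Omega,\]
which is uniquely solvable precisely because $\kappa_0^2$ is not a Dirichlet eigenvalue of $-\Delta$ on $\Omega$. The radiating volume potential $v(x):=\int_\Omega(K(y)+1)\Phi_{\kappa_0}(x,y)\,q(y)\,dy$ satisfies $(\Delta+\kappa_0^2)v=0$ in $\mathbb{R}^3\setminus\overline{\Omega}$ with $v+u^I=0$ on $\partial\Omega$ by continuity; by uniqueness of the exterior Dirichlet problem it coincides with $u^s_D$ outside $\overline\Omega$, so its farfield equals $u^\infty_D(\hat x,\theta)$, which in turn equals $\sum_m\Phi_{\kappa_0}^\infty(\hat x,z_m)Q_m+o(1)$.

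\textbf{Main obstacle.} The central technical difficulty is quantifying the Riemann-sum convergence with errors that survive the tight exponent constraints in each regime. Near the diagonal the kernel $\Phi_{\kappa_0}$ is singular, so the sum must be split into a near cluster of radius $\sim a^t$ handled via the minimum-distance bound, and a far part compared to the integral using the H\"older continuity of $K$ together with mean-value bounds on $\Phi_{\kappa_0}$; in part (3) the interplay between this error and the divergent factor $\mathbf{C}\,a^{-s}$ is exactly what forces the tight condition $s\leq 3t<(1+2\lambda/15)(1-h_1)$ with $h_1<1/6$. Two secondary obstacles are (i) establishing uniform invertibility of the Foldy--Lax system in part (3), where the absence of diagonal dominance makes one rely on the sign analysis of $\mathbf{C}$ and the cosine positivity $\tau>0$ inherited from Theorem \ref{Main-theorem}, and (ii) handling the $\sim a^{s/3}$-volume boundary layer formed by cubes crossing $\partial\Omega$, which is negligible in the limit but whose explicit estimate is required to pin down the Dirichlet condition rather than a softer impedance-type one.
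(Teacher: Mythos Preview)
Your treatment of parts (1) and (2) is essentially the paper's: rewrite the Foldy--Lax system with $u_m=-\mathbf{C}^{-1}Q_m$, recognise the off-diagonal sum as a Riemann sum for the Lippmann--Schwinger operator with density $(K+1)\overline{\mathbf{C}}_{lead}$, and control the discrepancy by splitting into near/far layers (the paper's terms $A_1,B_1,C_1,D_1$). The identification of $n$ in \eqref{neff-1}--\eqref{neff-3} is correct.

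Part (3), however, contains a genuine gap. Your rescaling $Q_m=a^s q(z_m)$ leads to the first-kind \emph{volume} equation
\[
\int_\Omega (K(y)+1)\Phi_{\kappa_0}(x,y)\,q(y)\,dy=-u^I(x),\qquad x\in\Omega,
\]
but this equation has \emph{no} solution: if $v(x)$ denotes the left side then $(\Delta+\kappa_0^2)v=-(K+1)q$ in $\Omega$, whereas $v=-u^I$ on all of $\Omega$ forces $(\Delta+\kappa_0^2)v=0$ there, hence $q\equiv 0$ and $v\equiv 0\neq -u^I$. A first-kind volume potential equation is not the right limiting object; the vanishing-diagonal heuristic only tells you that $Q_m/a^s$ does \emph{not} stabilise.

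The paper's route is quite different. It keeps the \emph{second-kind} Lippmann--Schwinger equation
\[
Y+a^{1-h_1-s}\int_\Omega (K+1)\overline{\mathbf{C}}\,\Phi_{\kappa_0}(\cdot,y)\,Y(y)\,dy=u^I
\]
with the diverging prefactor, and proves directly that its solution satisfies $\|Y\|_{H^\alpha(\Omega)}=\mathcal{O}(a^{\frac{s+h_1-1}{2}(1-\alpha)})$. This is done by converting the scattering problem into an interior boundary-value problem with nonlocal boundary operator $\mathbf{S}_{\kappa_0}^{-1}(-\tfrac12 I+\mathbf{K}_{\kappa_0})$, and showing coercivity of the associated sesquilinear form once the semiclassical parameter $h=a^{(s+h_1-1)/2}$ is small (this uses positivity of $\tfrac12 I-\mathbf{K}_0^*$ in the $\mathbf{S}_0$-inner product together with smoothing of $\mathbf{S}_{\kappa_0}-\mathbf{S}_0$ and $\mathbf{K}_{\kappa_0}^*-\mathbf{K}_0^*$). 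The smallness of $Y$ on $\Omega$ then gives smallness of its trace on $\partial\Omega$, and well-posedness of the exterior Dirichlet problem yields $u_a^\infty-u_D^\infty=o(1)$. The Riemann-sum comparison between the Foldy--Lax system and this blown-up Lippmann--Schwinger equation is then carried out exactly as in parts (1)--(2), but now using the quantitative bounds $\|Y\|_{L^\infty}=\mathcal{O}(a^{(1-s-h_1)/2})$ and $\|\nabla Y\|_{L^\infty}=\mathcal{O}(a^{(1-s-h_1)(1+\alpha)/2})$; it is the interplay of these blow-up rates with the layer-counting errors that produces the constraint $s<(1+\tfrac{2\lambda}{15})(1-h_1)$.
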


\bigskip

According to these results, we distinguish three regimes regarding the denseness of the bubbles.
\bigskip

\begin{enumerate}
\item {\it{Low regime}}. This regime is related to one of the following conditions: ($\gamma<1 $ and $\gamma+s<2 $) or ($\gamma=1 $ and the frequency is away from the Minnaert resonance with $s<1 $) or ($\gamma=1 $ and the frequency is near the resonance with $s+h_{1}<1 $). Under these conditions, the scattered fields vanish as $a<<1$, meaning that the corresponding cluster is weak and reflects no incident wave. 
\bigskip

\item {\it{Medium regime}}. This regime is related to the following conditions:
\begin{enumerate}
\item [(a)] If $\gamma<1, \gamma+s=2$ (in which case, $\omega$ is of course away from the Minnaert resonance), then the effective medium is composed of the background one to which we add, locally in $\Omega$, a positive term coming from the cluster. 
\bigskip

\item[(b)] If $\gamma=1,\ s=1, $ and $\omega $ away from the resonance. In this case, the effective medium is composed of the background one to which we add, locally in $\Omega$, a term coming from the cluster which changes sign whether the frequency is lower or higher than the Minnaert resonance. When the sign is positive, this means we have more reflection and otherwise we have more transmission. Hence sending incident waves at frequencies lower or higher than the Minnaert resonance $\omega_M$, the medium changes its behavior from transmitting to reflecting. The parameters modeling the bubbles, as the shape and the contrasts, which we can tune to get the sign we wish, are at our disposal. Then we can increase or decrease the transmission (or the reflection) by tuning appropriately these bubbles. 

\item [(c)] If $\gamma=1 $ and the frequency is near the resonance with $s+h_{1}=1 $, then we are in the same situation as in $(b)$. The difference is that, as the cluster density is estimated as $a^{-s}$ with $s=1-h_{1}$ and we can choose $h_1$ as close as we want to $1$, we see that we can derive effective medium having the same properties as in $(b)$, but with a very low number of bubbles, namely $M\sim a^{-s}$ with $s$ as close as we want to $0$. Hence with very low number of bubbles but using incident frequencies very close to the Minnaert resonance we can achieve same effects as if we use a quite dense cluster $(s=1)$ but with non resonating incident frequencies. 
This is in accordance with what is believed in the engineering community.  
\end{enumerate}
We observe in the form of the coefficient $n$ that after adding the bubbles, the density $\rho_0$ of the background did not change while the bulk mudulus $k_0$ is perturbed locally in $\Omega$. 
\bigskip

\item {\it{High regime}}. If $\omega $ is near, but larger than (with $l_M>0$), the Minnaert resonance $\omega_M$ and $1< s+h_{1}<\min \{\frac{3}{2}-t, 2-h_1\}$\footnote{The relevant condition is $1< s+h_{1}$ while the other part $s+h_{1}<\min \{ \frac{3}{2}-t,2-h_1\}$ is due to technical limitations.}, then the medium behaves as a totally reflecting one, i.e. as wall. Clusters of bubbles with densities of the order $M\sim a^{-s}$ with $s+h_{1} > 1$, allow no incident waves, sent at nearly resonating frequencies, to penetrate.   
\end{enumerate}

Finally, it is worth mentioning that for any given frequency of incidence $\omega$, we can choose (or tune) the properties of the bubbles so that the corresponding Minnaert resonance will be located near or close to it.
This way, for a given frequency $\omega$, we can be in any of the regimes described above by appropriately choosing the bubbles. Hence, we can tune the bubbles so that the incident sound, sent at the frequency $\omega$, will be more/less reflected or transmitted, across $\partial \Omega$, at our will.

\subsection{Application to metasurfaces}
Let $\Sigma \subset \mathbb{R}^{3}$ be such that either
\begin{itemize}
\item $\Sigma=\partial D $ for some open connected subset $D$ of  $\mathbb{R}^{3}$, or
\item $\Sigma$ is an open subset of $\Gamma $, where $\Gamma=\partial D  $ for some open connected subset $D$ of  $\mathbb{R}^{3}$.
\end{itemize}

In order to find a convenient way for counting the bubbles, we shall further assume that $\Sigma $ can be parametrized by a finite number of charts and we shall work with the image of a single such chart (which we shall continue to denote by $\Sigma $) at a time. In this way, we shall possibly overcount the bubbles but since we shall have to deal with only finite number of charts, the error estimates would still be valid.\\
Without loss of generality, let $\Sigma$ be of unit surface area. Now given a non-negative, Holder continuous function\footnote{By an abuse of notation, we denote the function by $K$ even in the case of distribution on the surfaces. Unlike the case of volumetric distributions, this function is only defined on the surface.} $K:\partial D \rightarrow \mathbb{R} $ , let $\Sigma_j, \ j=1,\dots,[a^{-s}] $ be a square (or quadrilateral) of area $a^{s} \frac{[K(z_j)+1]}{K(z_j)+1} $ which contains the centers of $[K(z_j)+1] $ bubbles. 
We fill-in $\Sigma$ with the $[a^{-s}]$ subdomains $\Sigma_j,\; j=1, ..., [a^{-s}]$ in a similar way as we did for the volumetric distribution.

In addition, similar to the case of volumetric distribution, we can now estimate the total area of squares $\Sigma_j $ touching the boundary $\partial \Sigma $ of $\Sigma$ as follows. 
Since area of each $\Sigma_{j} $ is of the order $a^{s} $, the radius of $\Sigma_j $ is of the order $a^{\frac{s}{2}} $. Therefore the length of intersecting curves with $\partial \Sigma $ is of order $a^{\frac{s}{2}}$. Now since the length of $\partial \Sigma $ is of order one, it follows that the number of such squares is not more than order $a^{-\frac{s}{2}} $. Therefore the total area covered by such $\Sigma_{j} $'s cannot exceed $a^{-\frac{s}{2}} \cdot a^{s}=a^{\frac{s}{2}} $.
\begin{theorem}\label{Main-theorem-surfaces}\footnote{The error terms in \eqref{sur1}, \eqref{sur3} and \eqref{sur4} are given explicitly in terms of the corresponding used parameters in \eqref{gamma-small-sur}-\eqref{gamma-away-sur}, \eqref{gamma-near-sur} and \eqref{final-sur-blow} respectively.}
Let the locations $z_{m}$'s  of the bubbles be distributed on a bounded subset $\Sigma $ (say of unit area) according to a given non-negative, real-valued function $K \in C^{0,\lambda}(\overline{\Sigma}) $ with their number $M:=M(a):=\mathcal{O}(a^{-s}) $ and their minimum distance $d:=d(a):=a^{t} $ as described above.\\
Let us consider the scattering problem
\begin{equation}
\begin{aligned}
&\left(\Delta +\kappa_{0}^{2} \right)u^{t}_{a}=0,\ \text{in} \ \mathbb{R}^{3}\setminus \Sigma,
\end{aligned}
\notag
\end{equation}
\begin{equation}
[u^{t}_{a}]=0, \ \left[\frac{\partial u^{t}_{a}}{\partial \nu} \right]-\sigma u^{t}_{a}=0, \ \text{on} \ \Sigma,
\notag
\end{equation}
\begin{equation}
\begin{aligned}
&u^{t}_{a}=u^{s}_{a}+e^{i \kappa_{0} x \cdot \theta},
\end{aligned}
\notag
\end{equation}
\begin{equation}
\begin{aligned}
&\frac{\partial u^{s}_{a}}{\partial \vert x \vert}-i \kappa_{0} u^{s}_{a} =o\left(\frac{1}{\vert x \vert} \right), \ \vert x \vert\rightarrow \infty,
\end{aligned}
\notag
\end{equation}
\begin{itemize}
\item[(1)] Suppose the conditions $0\leq t<\frac{1}{2} $, $\beta=1+\gamma $ hold and 
\begin{itemize}
\item[(a)] either $\gamma<1, \gamma+s=2$, or \\
\item[(b)] $\gamma=1, s=1,$ and $\omega $ away from Minnaert resonance.
\end{itemize}
Then
\begin{equation}
\begin{aligned}
u^{\infty}(\hat{x},\theta)-u^{\infty}_{a}(\hat{x},\theta)=o(1), \mbox{ as } a<<1, \mbox{ uniformly in terms of } \hat{x} \mbox{ and } \theta.
\end{aligned}
\label{sur1}
\end{equation}
In the case $(a)$, \[ \sigma:=-\omega^2 (K+1) \vert B  \vert \frac{\rho_0}{k} \] and in the case $(b)$, \[\sigma:=-\omega^2 (K+1)\frac{\vert B  \vert}{1-\frac{\omega^2}{\overline{\omega}_{M}^2}} \frac{\rho_0}{k} .\]
\item[(2)] Suppose that $\gamma=1$ and $\omega $ is near the Minnaert resonance,i.e. $1-\frac{\omega^2_M}{\omega^2}=l_M a^{h_1}$, with $l_M \neq 0$ and $h_1 \in (0, 1)$ where $s$ and $t$ satisfying the conditions \[\ s=1-h_1 \mbox{ and } \frac{s}{3} \leq t<\min\{1-h_1,\frac{1}{2}\}.\]
Then
\begin{equation}
\begin{aligned}
u^{\infty}(\hat{x},\theta)-u^{\infty}_{a}(\hat{x},\theta)=o(1), \mbox{ as } a<<1, \mbox{ uniformly in terms of } \hat{x} \mbox{ and } \theta.
\end{aligned}
\label{sur3}
\end{equation}
In this case, \[\sigma:= \overline{\omega}^2_M (K+1) \frac{\vert B \vert}{l_M}\frac{ \rho_0}{k} .\]
\item[(3)] Suppose that $\gamma=1$ and $\omega $ is near the Minnaert resonance, i.e. $1-\frac{\omega^2_M}{\omega^2}=l_M a^{h_1}$, with $l_M>0$ and $h_1$, $s$ and $t$ satisfy the conditions
\begin{equation}
\begin{aligned}
 0<1-h_{1}<s \leq 3t< \min \left\{\frac{3}{2}-t-h_{1}, \left(1+\frac{\lambda}{7}\right)(1-h_1) \right\},\ h_1 < \frac{1}{6}.
\end{aligned}
\notag
\end{equation}
Then provided $\kappa_{0}^{2} $ is not an eigenvalue for the Dirichlet Laplacian in $D$, we have
\begin{equation}
\begin{aligned}
&u^{\infty}(\hat{x},\theta)-u^{\infty}_{D}(\hat{x},\theta)=o(1), \mbox{ as } a<<1, \mbox{ uniformly in terms of } \hat{x} \mbox{ and } \theta
\end{aligned}
\label{sur4}
\end{equation}
where
\begin{itemize}
\item if $\Sigma $ is an open surface, $u^{t}_{D}$ is determined by the Dirichlet crack problem 
\begin{equation}
\left(\Delta+\kappa_{0}^{2} \right)u^{t}_{D}=0, \ \text{in}\ \mathbb{R}^{3}\setminus \Sigma,
\notag
\end{equation}
\begin{equation}
u^{t}_{D}=0,\ \text{on}\ \Sigma,
\notag
\end{equation}
with the Sommerfeld radiation conditions satisfied by $u^{t}_{D}-u^{I} $, and
\item if $\Sigma=\partial D $ for some connected open subset $D \subset \mathbb{R}^3$, 
then $u^{t}_{D}$ is the unique solution to the exterior Dirichlet problem 
\begin{equation}
\left(\Delta+\kappa_{0}^{2} \right)u^{t}_{D}=0, \ \text{in}\ \mathbb{R}^{3}\setminus \overline{D},
\notag
\end{equation}
\begin{equation}
u^{t}_{D}=0,\ \text{on}\ \partial D,
\notag
\end{equation}
with the Sommerfeld radiation conditions satisfied by $u^{t}_{D}-u^{I} $.
\end{itemize}
\end{itemize}
\end{theorem}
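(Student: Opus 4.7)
The starting point is the point-interaction expansion of Theorem \ref{Main-theorem}, which yields
\[
u^\infty_a(\hat{x},\theta) = \sum_{m=1}^M \Phi^\infty_{\kappa_0}(\hat{x}, z_m)\, Q_m + o(1),
\]
where $(Q_m)$ solves the algebraic system (\ref{LAS-1-theorem}). Writing $U_m := u^I(z_m) + \sum_{l\neq m}\Phi_{\kappa_0}(z_l,z_m) Q_l$ for the discrete total field at $z_m$, the system reads $Q_m = -\mathbf{C}\, U_m$. The bubbles are grouped by the chart squares $\Sigma_j$, where $|\Sigma_j| = a^s[K(z_j)+1]/(K(z_j)+1)$ contains $[K(z_j)+1]$ bubbles; hence each sum $\sum_{l\neq m}\Phi_{\kappa_0}(z_l,z_m)Q_l$ is recognized as a Riemann sum on $\Sigma$ with local weight $a^{-s}(K(z_j)+1)\mathbf{C}$. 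In cases (1) and (2) this weight converges to the function $\sigma(y)$ prescribed in the statement (consistent with the values of $\overline{\mathbf{C}}_{\mathrm{lead}}$ and $\overline{\mathbf{C}}$ computed in Subsection \ref{scaling}), while in case (3) the ratio $|\mathbf{C}|/a^s = \mathcal{O}(a^{1-h_1-s})$ blows up because $s>1-h_1$.

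For cases (1) and (2), I would compare the discrete problem with the boundary integral equation
\[
v(x) + \int_\Sigma \Phi_{\kappa_0}(x,y)\,\sigma(y)\,v(y)\,dS(y) = e^{i\kappa_0 x\cdot\theta}, \qquad x\in\mathbb{R}^3,
\]
which is equivalent, via the jump relations for the single-layer potential, to the transmission problem with Dirac potential $\sigma$ supported on $\Sigma$. The bulk of the work is to estimate, uniformly in $m$, the difference between $\sum_{l\neq m}\Phi_{\kappa_0}(z_l,z_m)Q_l$ and $\int_\Sigma \Phi_{\kappa_0}(y,z_m)\sigma(y)v(y)\,dS(y)$. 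I would split the sum into a near zone $\{l:|z_l-z_m|\leq r(a)\}$ for a suitable $r(a)\gg a^t$ and a far zone; the near contribution is controlled using $|Q_l|\leq |\mathbf{C}|\,\|U\|_\infty$, the minimum separation $d\geq d_{\min}a^t$, and the integrability of $|y-z_m|^{-1}$ over a $2$-surface, while the far contribution is controlled by the H\"older regularities of $K$ and of $v$. Once $\max_m|U_m - v(z_m)| = o(1)$ is established, an application of the Fredholm alternative (using well-posedness of the transmission problem with bounded potential on the Lipschitz surface $\Sigma$) gives the convergence. Substituting back into the far-field expansion yields (\ref{sur1}) and (\ref{sur3}).

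For case (3) the key observation is that $\mathbf{C}^{-1}Q_m$ enters the algebraic system with the small factor $\mathbf{C}^{-1}=\mathcal{O}(a^{h_1-1})$, so that, after multiplying through by $a^{-s}(K(z_j)+1)\mathbf{C}$, the diagonal contribution is of order $a^{1-h_1-s}\to 0$, while the off-diagonal Riemann sum converges to a surface integral of an unknown density $\mu$. The limiting system becomes the first-kind integral equation
\[
\int_\Sigma \Phi_{\kappa_0}(y,x)\,\mu(y)\,dS(y) = -e^{i\kappa_0 x\cdot\theta}, \qquad x\in\Sigma,
\]
which is the standard single-layer formulation of the exterior Dirichlet problem for the Helmholtz operator on $\Sigma$ (respectively, the Dirichlet crack problem when $\Sigma$ is an open piece of $\partial D$). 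The assumption that $\kappa_0^2$ is not a Dirichlet eigenvalue in $D$ guarantees solvability of this equation and identifies the limit far-field with $u^\infty_D$.

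The main obstacle is the simultaneous balance of three types of errors: the near-singularity bound $\sum_{|z_l-z_m|\leq r(a)} |Q_l|/|z_l-z_m|$, which forces $s\leq 3t$ via the two-dimensional packing of bubbles at separation $a^t$; the H\"older-type quadrature error over each $\Sigma_j$, which contributes a factor $a^{\lambda t/2}$ and yields the coefficient $1+\lambda/7$ in the upper bound on $3t$; and the blow-up rate $a^{1-h_1-s}$ of $\mathbf{C}/a^s$ in case (3), which must dominate the Riemann-sum error without destroying the Fredholm-type stability of the limiting integral equation. Calibrating $r(a)$ between $a^t$ and $1$ to optimize these three errors, and then transferring the convergence of $U_m$ to the corresponding far-field sum through the analogous expansion of $\Phi^\infty_{\kappa_0}$, constitutes the technical heart of the proof.
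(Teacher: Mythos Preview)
Your overall architecture for cases (1) and (2) matches the paper: set up the surface integral equation $Y+\int_\Sigma\Phi_{\kappa_0}\sigma Y=u^I$, compare the algebraic system at the nodes $z_m$ to this continuous equation by estimating the Riemann--sum discrepancy, and then read off the far-field. The paper organizes the discrepancy not via a single near/far radius $r(a)$ but by counting $\Sigma_j$'s in concentric ``layers'' (at layer $n$ there are $\mathcal{O}(n)$ squares at distance $\sim n\,a^{s/2}$); this is essentially equivalent to your splitting. One slip: the quadrature error from the H\"older continuity of $K$ is $\mathcal{O}(a^{\lambda s/2})$, not $a^{\lambda t/2}$, since the diameter of $\Sigma_j$ is $a^{s/2}$.

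For case (3) your route diverges from the paper's, and as written it has a gap. You propose to rescale to a density $\mu$ and argue that the diagonal term vanishes, leaving the first-kind equation $\int_\Sigma\Phi_{\kappa_0}\mu=-u^I$. (Note your exponent is inverted: the diagonal coefficient is $a^{s}\mathbf{C}^{-1}=\mathcal{O}(a^{s+h_1-1})\to 0$, not $a^{1-h_1-s}$.) The difficulty is that passing to the limit in a first-kind equation requires uniform-in-$a$ bounds on the discrete densities $\mu_m$, which you have not supplied; without them you cannot control the product (blowing-up coefficient)$\times$(Riemann error). The paper avoids this by working with the field $Y$ rather than the density: it shows directly, via a variational formulation in $B_R$ with the exterior Dirichlet--to--Neumann map and the coercivity of $-T_0$, that the continuous solution satisfies $\|Y\|_{L^2(\Sigma)}=\mathcal{O}(a^{(s+h_1-1)/2})$. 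This smallness both identifies the limit with $u^\infty_D$ and feeds into $\|Y\|_{L^\infty},\|Y\|_{W^{1,p}}$ bounds that tame the blow-up factor $a^{1-h_1-s}$ in the Riemann-sum comparison. If you want to salvage your first-kind approach, you would need an analogous a~priori estimate on $\sum_m a^s|\mu_m|^2$, which in practice leads back to the same variational/DtN machinery.
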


\bigskip

As in the $3$ D case, we distinguish three regimes.

\begin{enumerate}
\item {\it{Low regime}}. If ($\gamma<1 $ and $\gamma+s<2 $) or ($\gamma=1 $ and the frequency is away from the Minnaert resonance with $s<1 $) or ($\gamma=1 $ and the frequency is near the resonance with $s+h_{1}<1 $), then the scattered fields vanish as $a<<1$, i.e. the corresponding cluster is weak and reflects no incident wave. 
\bigskip

\item {\it{Medium regime}}. While the contribution coming from a cluster of bubbles distributed in a volumetric domain $\Omega$
 is represented by a $3$ D potential supported in $\Omega$, precisely $n_{eff}\; \chi_\Omega$\footnote{where $n_{eff}:=n- \omega^2 \rho_0 k_{0}^{-1}  $ in the cases \eqref{neff-1} and \eqref{neff-2}, and $n_{eff}:=n-\overline{\omega}_{M}^2 \rho_0 k_{0}^{-1}  $ in the case \eqref{neff-3}. }, with a well characterized potential $n_{eff}$, the contribution coming from a cluster of bubbles distributed in a surface $\Sigma$ is represented by a Dirac potential supported on $\Sigma$, precisely $\sigma \  \delta_\Sigma$. As for the $3$ D distribution of the bubbles, we have the following properties that are encoded in the definition of the coefficient $\sigma$.
\bigskip

The properties of these surface potentials differ according to the following sub-regimes:
\begin{enumerate}
\item [(a)] If $\gamma<1, \gamma+s=2$ (and then $\omega$ is away from the Minnaert resonance), then the surface potential supported on $\Sigma$ has a multiplicative density which is positive.
The amplitude of this density describes to what extent the screen $\Sigma$ is reflecting, i.e. there is more reflection than transmission as 
\begin{equation}\label{ratio-surface}
\frac{\left[\frac{\partial u^{t}_{a}}{\partial \nu} \right]}{u^{t}_{a}}=\sigma>0.
\end{equation}

\bigskip

\item[(b)] If $\gamma=1,\ s=1, $ and $\omega $ away from resonance, then the surface potential  coming from the cluster changes sign whether the frequency is lower or higher than the Minnaert resonance. When the sign is positive, this means we have more reflection, see \eqref{ratio-surface}  and otherwise we have more transmission as in this case
\begin{equation}\label{ratio-surface-negative}
\frac{\left[\frac{\partial u^{t}_{a}}{\partial \nu} \right]}{u^{t}_{a}}=\sigma<0.
\end{equation} 
As the coefficient $\sigma$ is given by parameters modeling the bubbles, which are at our disposal, then we can increase or decrease the transmission (or the reflection) by tuning appropriately these bubbles. 

\item [(c)] If $\gamma=1 $ and the frequency is near the resonance with $s+h_{1}=1 $, then we are in the same situation as in $(b)$. However, and as in the $3$ D case, we can derive the effective surface potential having the same properties as in $(b)$, but with a very low number of bubbles, namely $M\sim a^{-s}$ with $s$ as close as we want to $0$ using incident frequencies very close to the Minnaert resonance with $h_1$ very close to $1$, through the relation $s=1-h_{1}$.   
\end{enumerate}
\bigskip

\item {\it{High regime}}. If $\omega $ is near, but larger than (with $l_M>0$) the Minnaert resonance $\omega_M$ and $1< s+h_{1}<\min \{ \frac{3}{2}-t, 2-h_1\}$, then the surface behaves as a totally reflecting one, i.e. as a dark screen. Clusters of bubbles with densities of the order $M\sim a^{-s}$ with $s+h_{1} > 1$, and distributed allong a given surface $\Sigma$, allow no incident waves, sent at nearly resonating frequencies, to be transmitted through $\Sigma$.   
\end{enumerate}

As mentioned for the volumetric metamaterials, for a given frequency $\omega$, we can be in any of the regimes described above by appropriately choosing the bubbles. This means that for a given frequency of incidence $\omega$, we can tune the bubbles so that the incident sound, sent at the frequency $\omega$, will be more/less reflected or transmitted, across $\Sigma$, at our will.   
\bigskip

Here the surface $\Sigma$ can be open or closed. In the case when it is closed, then an incoming incident waves \textquoteleft comes from outside' and the meaning of (\ref{ratio-surface}) is that the total reflected energy $\frac{\frac{\partial (u^{t}_{a})_+}{\partial \nu}}{(u^{t}_{a})_+}$ is larger than the transmitted one $\frac{\frac{\partial (u^{t}_{a})_-}{\partial \nu}}{(u^{t}_{a})_-}$.
Here the subscripts $+$ and $-$ refer to total fields outside and inside of the domain that encloses $\Sigma$.  In the case when it is open, the interpretations of  (\ref{ratio-surface}) and  (\ref{ratio-surface-negative}) need more precision. Even though we assumed that $\Sigma$ is a part of a closed surface and the orientation of $\Sigma$ is inherited from the one of the closed one, an incident wave can be incoming or outgoing or mixed. Hence, in these cases, (\ref{ratio-surface}) and  (\ref{ratio-surface-negative}) should be interpreted as explained above for incoming incident, in the opposite way for outgoing waves and both (and accordingly) for the mixed case.   
\bigskip

\section{Formal arguments}\label{Formal-Arguments}
In this section, we outline some formal arguments which motivate our strategy for the proofs of the main results.
\subsection{Arguments to generate volumetric metamaterials}
We use $s^*$ as a parameter which can be equal $2- \gamma$, or $1-h_1$, $h_1 \in [0, 1)$. We rewrite (\ref{LAS-1-theorem}) as 
$$
 -\frac{Q_m}{{\bf{C}}} +a^{(s^*-s})\sum_{\substack{j=1 \\ j\neq m}}^{M}\overline{\bf{C}} a^{s}\; \Phi_{\kappa_0}(z_m,z_j)\left(-\frac{Q_j}{{\bf{C}}}\right)=u^{i}(z_m, \theta),~~
$$
for $ m=1,..., M$. Similarly, we rewrite the representation (\ref{Away-from-resonance}) (respectively \eqref{Near-resonance}) as
\begin{eqnarray}\label{x oustdie1 D_m farmain-recent***}
u^\infty(\hat{x},\theta)
\hspace{-.05cm}=\hspace{-.1cm}\;- a^{(s^*-s)} \sum_{m=1}^{M}e^{-i{\kappa_0}\hat{x}\cdot z_m}\overline{\bf{C}} a^{s} \left(-\frac{Q_m}{{\bf{C}}}\right)\hspace{-.03cm}+\hspace{-.03cm}o(1), \; a \rightarrow 0. 
 \end{eqnarray}
 Let us introduce the Lippmann-Schwinger  equation 
 \begin{equation}\label{Acoustic-model}
 Y +a^{(s^*-s)}\int_{\Omega}\overline{\bf{C}}(K(z)+1)\Phi_{\kappa_0}(\cdot, z)Y(z)\; dz\; =u^{i}(\cdot, \theta) 
 \end{equation} modeling the unique solution of the problem $
 \Delta Y +\kappa_0^2 Y -a^{(s^*-s)}\overline{\bf{C}}(K(z)+1)\chi_{\Omega}Y\; =0$ with (S.R.C).
The far-field corresponding to the solution of (\ref{Acoustic-model}) has the form
\begin{equation}\label{far-field-acoustic}
 Y^{\infty}(\hat{x}, \theta):=-a^{(s^*-s)}\int_{\Omega}e^{-i{\kappa_0} \hat{x}\cdot z} \overline{\bf{C}}(K(z)+1)Y(z)dz.
\end{equation}

Based on the fact that $M=\sum_{m=1}^{[a^{-s}]}\sum^{[K(z_m)]+1}_{j=1} 1$, $\Omega=\lim_{a \rightarrow 0
}\cup^{[a^{-s}]}_{j=1} \Omega_j$ with the volume of $ \Omega_j$, for $j=1,..., [a^{-s}]$, equals $a^s\frac{[K(z_j)]+1}{K(z_j)+1}$, we derive the approximation $
  u^\infty(\hat{x},\theta)-Y^\infty(\hat{x},\theta) = o(1),\; a \rightarrow 0. $

\subsection{The arguments to generate metascreens}

As for the metametrial case, we use $s^*$ as a parameter which can be equal $2- \gamma$, or $1-h_1$, $h_1 \in [0, 1)$ and rewrite (\ref{LAS-1-theorem}) as 
$$
 -\frac{Q_m}{{\bf{C}}} +a^{(s^*-s)}\sum_{\substack{j=1 \\ j\neq m}}^{M}\overline{\bf{C}} a^{s}\; \Phi_{\kappa_0}(z_m,z_j)\left(-\frac{Q_j}{{\bf{C}}}\right)=u^{i}(z_m, \theta),~~
$$
for $ m=1,..., M$. Similarly, we rewrite the representation (\ref{Away-from-resonance}) (respectively \eqref{Near-resonance}) as
\begin{eqnarray}\label{x oustdie1 D_m farmain-recent***2}
u^\infty(\hat{x},\theta)
\hspace{-.05cm}=\hspace{-.1cm}\;- a^{(s^*-s)}\sum_{m=1}^{M}e^{-i{\kappa_0}\hat{x}\cdot z_m}\overline{\bf{C}} a^{s} \left(-\frac{Q_m}{{\bf{C}}}\right)\hspace{-.03cm}+\hspace{-.03cm}o(1), \; a \rightarrow 0. 
 \end{eqnarray}
 Let us now introduce the boundary integral  equation on the surface $\Sigma$:
 \begin{equation}\label{Acoustic-model-2}
 Y +a^{(s^*-s)}\int_{\Sigma}\overline{\bf{C}}(K(z)+1)\Phi_{\kappa_0}(\cdot, z)Y(z)\; dz\; =u^{i}(\cdot, \theta). 
 \end{equation} Then $Y:=-a^{(s^*-s)}\int_{\Sigma}\overline{\bf{C}}(K(z)+1)\Phi_{\kappa_0}(\cdot, z)Y(z)\; dz\;+u^{i}(\cdot, \theta)$  solves the problem $
 \Delta Y +\kappa_0^2 Y\; =0$ in $\mathbb{R}^3\setminus{\bar{\Sigma}}$  with (S.R.C) and the transmission conditions $[Y]=0$ and $[\nabla Y\cdot \nu]+a^{(s^*-s)}\overline{\bf{C}}(K(z)+1)Y=0$ across $\Sigma$.
The corresponding far-field  is
\begin{equation}\label{far-field-acoustic2}
 Y^{\infty}(\hat{x}, \theta):=-a^{(s^*-s)}\int_{\Sigma}e^{-i{\kappa_0} \hat{x}\cdot z}\overline{\bf{C}}(K(z)+1)Y(z)dz.
\end{equation}
Based on the fact that $M=\sum_{m=1}^{[a^{-s}]}\sum^{[K(z_m)]+1}_{j=1} 1$, $\Sigma=\lim_{a \rightarrow 0
}\cup^{[a^{-s}]}_{j=1} \Sigma_j$ with the area of $ \Sigma_j$, for $j=1,..., [a^{-s}]$, equals to $a^s\frac{[K(z_j)]+1}{K(z_j)+1}$, we derive the approximation $
  u^\infty(\hat{x},\theta)-Y^\infty(\hat{x},\theta) = o(1),\; a \rightarrow 0. $
\bigskip
~
\bigskip

For both the volumetric and surface distribution of the bubbles, we see that as $s<s^*$, $Y^\infty(\hat{x},\theta)=o(1)$. Also, we have the exact limiting models when $s=s^*$. What is left is to characterize the limiting models when $s>s^*$. We describe this case in the following subsections. 

\subsection{The extreme cases $s>s^*$: Volumetric metamaterials}
Recall that \[Y +a^{(s^*-s)}\int_{\Omega}\overline{\bf{C}}(K(z)+1)\Phi_{\kappa_0}(\cdot, z)Y(z)\; dz\; =u^{i}(\cdot, \theta).\] The question is how to characterize $\lim_{a \rightarrow 0} Y(\cdot,\theta)?$
We set $h:= a^{\frac{s-s^*}{2}}$, $V_0:=\overline{\bf{C}}(K(z)+1)$. First, we show that $Y$ satisfies the following boundary-value problem
\begin{equation}
\begin{aligned}
&(\Delta+\kappa_0^2-h^{-2} V_{0})Y=0, \ \text{in} \ \Omega, \\
&\frac{\partial Y}{\partial \nu}-\mathbf{S}_{\kappa_0}^{-1}\left[-\frac{1}{2}Id+\mathbf{K}_{\kappa_0} \right]Y=\mathbf{S}_{\kappa_0}^{-1}u^{I}, \ \text{on}\ \partial\Omega.
\end{aligned}
\label{key-one-variational-pb}
\end{equation}
Then we have
\begin{equation}
a(Y, Y)= \left\langle \mathbf{S}_{\kappa_0}^{-1}u^{I},Y \right\rangle_{-\frac{1}{2},\frac{1}{2}}\ ,
\label{key-one-variational-formulation}
\end{equation}
where
\begin{equation}
a(Y,Y):= \int_{\Omega} \vert \nabla Y \vert^2+\int_{\Omega} \left(-\kappa_0^{2} +h^{-2}V_{0} \right)Y \cdot \overline{Y}-\int_{\partial \Omega} BY \cdot \overline{Y},
\label{key-one-quadratic-form}
\end{equation}
with $BY:=\mathbf{S}_{\kappa_0}^{-1}\left[-\frac{1}{2}Id+\mathbf{K}_{\kappa_0} \right]Y, \ Y \in H^{1}(\Omega) $. Here $\mathbf{S}_{\kappa_0}$ and $ \mathbf{K}_{\kappa_0}$ stand for the single and double layer potentials at the frequency $\kappa_0$, see (\ref{layer-potentials}) for the explicit definitions.\\
We prove the following inequality
\begin{equation}
Re\left[-\int_{\partial \Omega} BY\cdot \overline{Y} \right]\geq -\epsilon \norm{Y}^{2}_{H^{1}(\Omega)}-C(\epsilon)\norm{Y}_{L^{2}(\Omega)}^{2},
\label{key-one}
\end{equation}
with which we deduce that there exists $h_0 <<1 $ such that for any $h<h_0 $, we have
\begin{equation}
Re \ a(Y,Y)\geq (1-\epsilon) \int_{\Omega} \vert \nabla Y \vert^{2}+\left(-\kappa_0^2+Ch^{-2}-\frac{5\epsilon}{4}-C(\epsilon) \right) \int_{\Omega} \vert Y \vert^2 \geq \tilde{C} \norm{Y}^{2}_{H^{1}(\Omega)}, 
\label{key-one-coercivity}
\end{equation}
where $\tilde{C} $ is a positive constant.\\

Based on (\ref{key-one-coercivity}) and (\ref{key-one-variational-formulation}), we deduce that $\Vert Y \Vert_{H^1(\Omega)}$ is uniformly bounded. Integrating by parts in \eqref{key-one-variational-pb} and using the estimate for $\norm{Y}_{H^{\frac{1}{2}}(\partial \Omega)}$, and the fact \[\frac{\partial Y}{\partial \nu}=\mathbf{S}_{\kappa_0}^{-1}\left[-\frac{1}{2}Id+\mathbf{K}_{\kappa_0} \right]Y+\mathbf{S}_{\kappa_0}^{-1}u^{I}, \ \text{on}\ \partial\Omega ,\] we obtain
\[\int_{\Omega} \vert \nabla Y\vert^2+\int_{\Omega} \left(h^{-2}V_{0}-\kappa_{0}^{2} \right) \vert Y\vert^2 =\int_{\partial \Omega} \frac{\partial Y}{\partial \nu} Y =\mathcal{O}(1), \]
whence it follows that $\norm{Y}_{L^2(\Omega)}=\mathcal{O}(h) $. Therefore, using interpolation, we have the estimate $\Vert Y \Vert_{H^r(\Omega)}=O(h^{1-r})$ or $\Vert Y\Vert_{H^r(\partial \Omega)}=O(h^{\frac{1}{2}-r})$. Hence $Y^{\infty}(\hat{x},\theta)-u^{\infty}_D(\hat{x},\theta) =o(1),\; h<<1$. Here $(\Delta +\kappa_0^2)u_D^s=0, \mbox{ in } \mathbb{R}^3\setminus{\overline{\Omega}} \mbox{ and } u_D^s=-u^i(\cdot, \theta) \mbox{ on } \partial \Omega $ with (S.R.C). Finally
 $u^{\infty}(\hat{x},\theta)-u^{\infty}_D(\hat{x},\theta) =o(1),\; a<<1$.

\subsection{The extreme cases $s>s^*$:  Metascreens}
We show the idea for $\Sigma$ as a closed surfaces. Recall that 
\begin{equation}\label{key-2-integral-equation}
Y +a^{(s^*-s)}\int_{\Sigma}\overline{\bf{C}}(K(z)+1)\Phi_{\kappa_0}(\cdot, z)Y(z)\; dz\; =u^{i}(\cdot, \theta).
\end{equation}
To characterize $\lim_{a \rightarrow 0} Y(\cdot,\theta)$ let us set, as for the volumetric case, $h:= a^{\frac{s-s^*}{2}}$, $\sigma:=\overline{\bf{C}}(K(z)+1)$.  As a first step, we observe that the scattering problem \eqref{scat-3a}-\eqref{scat-3d} can be transformed into the equivalent boundary value problem 
\begin{equation}
\begin{aligned}
&(\Delta +\kappa_{0}^2)Y =0,\ \text{in} \ B_{R}\setminus \Sigma, \\
&[Y]=0,\ \left[\frac{\partial Y}{\partial \nu} \right]- h^{-2} \sigma Y=0, \ \text{on} \ \Sigma, \\
&\left[\frac{\partial Y}{\partial \nu} \right]-TY= \frac{\partial u^{I}}{\partial \nu} -Tu^{I} ,\ \text{on} \ \partial B_{R},
\end{aligned}
\label{key-2-Scattering-problem}
\end{equation}
where $T: H^{\frac{1}{2}}(\partial B_R) \rightarrow H^{-\frac{1}{2}}(\partial B_R) $ is the Dirichlet to Neumann (D-N) map for the exterior problem on $\mathbb{R}^{3}\setminus B_R $.\\
We derive
\begin{equation}
\int_{B_R} \vert \nabla Y \vert^2 -\kappa_{0}^{2} \int_{B_R} \vert Y \vert^2 + h^{-2}\int_{\Sigma} \overline{\sigma}  \vert Y \vert^2 -\langle TY,Y \rangle_{-\frac{1}{2},\frac{1}{2}}=\left\langle  \frac{\partial u^{I}}{\partial \nu}-Tu^{I},Y \right\rangle_{-\frac{1}{2},\frac{1}{2}}.
\label{key-2-variational-formulation}
\end{equation}
Now using the fact that the operator $T$ can be decomposed into a coercive and a smoothing part, we obtain the inequality
\begin{equation}
\left\langle T Y, Y \right\rangle_{-\frac{1}{2},\frac{1}{2}}\geq C \Vert Y\Vert_{H^\frac{1}{2}(\partial B_R)}- \epsilon \Vert Y\Vert_{H^1(B_R)} -C(\epsilon)\Vert Y\Vert_{L^2(B_R)}.
\end{equation}
Plugging this in (\ref{key-2-variational-formulation}), we derive the estimate
\begin{equation}
\begin{aligned}
\left(1-\epsilon \right) \norm{\nabla Y}_{L^{2}(B_R)}^{2}&+ \left(-\kappa_{0}^2+Ch^{-2}-\epsilon-C(\epsilon) \right) \norm{Y}_{L^2(\Sigma)}^{2} + C \norm{Y}_{H^{\frac{1}{2}}(\partial B_R)}^{2} \\
& \leq C \norm{Y}_{H^{\frac{1}{2}}(\partial B_R)}+\mathcal{O}(1).
\end{aligned}
\label{key-2-inequality}
\end{equation}
From the integral equation (\ref{key-2-integral-equation}) and the invertibility properties of the single layer potentials, we derive the estimate 
\begin{equation}
\norm{Y}_{H^{\frac{1}{2}}(\partial B_R)} \leq C h^{-2} \norm{Y}_{L^{2}(\Sigma)} + O(1).
\end{equation}
With this estimate in (\ref{key-2-inequality}), we deduce that $\norm{Y}_{L^{2}(\Sigma)}=O(1)$.
Once more, using (\ref{key-2-inequality}), as for $h$ small enough the constants are positive, we derive that $\norm{Y}_{H^{\frac{1}{2}}(\partial B_R)}=O(1)$ and then the improved estimate
$\norm{Y}_{L^{2}(\Sigma)}=O(h)$.  
 Hence $Y^{\infty}(\hat{x},\theta)-u_D^{\infty}(\hat{x},\theta) =o(1),\; h<<1$. Here $(\Delta +\kappa_0^2)u_{D}^s=0, \mbox{ in } \mathbb{R}^3\setminus{\Sigma},\; u_D^s=-u^i(\cdot, \theta) \mbox{ on } \Sigma$ with (S.R.C). Finally $u^{\infty}(\hat{x},\theta)-u^{\infty}_D(\hat{x},\theta) =o(1),\; a<<1$.

\bigskip
~~~
\bigskip

\section{The case of metamaterials}\label{Section-volume}
In this section, we deal with the case of volumetric distribution of the gas bubbles. In this direction, we first study the volume integral equations corresponding to the equivalent scattering problem and then compare this equivalent problem to the original scattering problem.
\subsection{The volume integral equation and corresponding estimates}
Let us consider the scattering problem
\begin{equation}
\begin{aligned}
&\left(\Delta +\kappa_{0}^{2}-h_{*}V_{0} \right)u^{t}_{a}=0,\ \text{in} \ \mathbb{R}^{3},
\end{aligned}
\label{scat-1a}
\end{equation}
\begin{equation}
\begin{aligned}
&u^{t}_{a}=u^{s}_{a}+e^{i \kappa_{0} x \cdot \theta},
\end{aligned}
\label{scat-1b}
\end{equation}
\begin{equation}
\begin{aligned}
&\frac{\partial u^{s}_{a}}{\partial \vert x \vert}-i \kappa_{0} u^{s}_{a} =o\left(\frac{1}{\vert x \vert} \right), \ \vert x \vert\rightarrow \infty,
\end{aligned}
\label{scat-1c}
\end{equation}
and the corresponding Lippmann-Schwinger equation
\begin{equation}
\begin{aligned}
Y(z)+h_{*} \int_{\Omega} \Phi_{\kappa_0}(z,y)V_{0}(y) Y(y) dy=u^{I}(z),\ z \in \mathbb{R}^{3},
\end{aligned}
\label{vol-int}
\end{equation}
where $h_{*} $ is a positive real number and $u^{I}(z)=e^{i\kappa_{0}z \cdot \theta} $. Note that $V_{0}=K^{M}\overline{\mathbf{C}} $ in our case, where $K^M:=(K+1) $.\\
It is easy to see that $u^{t}_{a} $ is a solution of \eqref{scat-1a}-\eqref{scat-1c} if and only if 
\begin{equation}
\begin{aligned}
u_{a}^{t}&=
\begin{cases}
Y, &\ \text{in}\ \Omega,\\
u^{I}-h_{*} \int_{\Omega} \Phi_{\kappa_0}(z,y)V_{0}(y) Y(y) dy, &\ \text{in} \ \mathbb{R}^{3}\setminus \overline{\Omega},
\end{cases}
\end{aligned}
\notag
\end{equation} 
where $Y$ satisfies \eqref{vol-int}.\\
In what follows, we are interested in the two cases (in the limit as $a\rightarrow 0$), stated below.
\begin{itemize}
\item $h_{*}=\mathcal{O}(1) $, 
\item $h_{*}=a^{1-h_1-s}, s+h_1>1  $.
\end{itemize}
When $h_{*}=\mathcal{O}(1)\ \text{as}\ a\rightarrow 0 $, the existence and uniqueness of solution $Y$ to the integral equation \eqref{vol-int} follows by a standard argument based on Fredholm alternative (see \cite{ACKS}). A similar argument also suffices in the case $h_{*}=a^{1-h_1-s} $ since $a^{1-h_1-s}V_{0} $ is real-valued and the unique solution belongs to the class $H^{2}_{loc}(\mathbb{R}^{3}) $ (see \cite{CMS}, \cite{CK}).\\
We shall next establish asymptotic estimates for the solution $Y$ and its gradient $ \nabla Y$ in suitable norms.\\
We recall that the single layer potential $\mathbf{S}_{\kappa_0} $, double layer potential $\mathbf{K}_{\kappa_0} $ and the adjoint $\mathbf{K}_{\kappa_0}^{*} $ of the double layer potential are defined as
\begin{equation}
\begin{aligned}
&\mathbf{S}_{\kappa_0}\phi(x):=\int_{\partial \Omega} \Phi_{\kappa_{0}}(x,y) \phi(y) dy, \\
&\mathbf{K}_{\kappa_0}\phi(x):=\int_{\partial \Omega} \frac{\partial \Phi_{\kappa_{0}}(x,y)}{\partial \nu(y)} \phi(y) dy,\\
&\mathbf{K}_{\kappa_0}^{*}\phi(x):=\int_{\partial \Omega}  \frac{\partial \Phi_{\kappa_{0}}(x,y)}{\partial \nu(x)} \phi(y) dy,
\end{aligned}
\label{layer-potentials}
\end{equation}
where for $x,y \in \mathbb{R}^3$,\[\Phi_{\kappa_0}(x,y):= \frac{e^{i \kappa_{0} \vert x-y \vert}}{4\pi \vert x-y \vert} \] denotes the fundamental solution of the Helmholtz equation in three dimensions with a fixed wave number $\kappa_{0} $. In the case $\kappa_{0}=0 $, we shall denote the corresponding operators by $\mathbf{S}_{0}, \mathbf{K}_{0} $ and $\mathbf{K}_{0}^{*} $.\\
Further the single and double layer potentials satisfy the mapping properties (see \cite{St})
\begin{equation}
\begin{aligned}
\mathbf{S}_{\kappa_0}: H^{s-1}(\partial \Omega) &\rightarrow H^{s}(\partial \Omega), \ 0\leq s\leq 1,  \\
\frac{1}{2}Id-\mathbf{K}_{\kappa_0}^{*}: H^{s}(\partial \Omega) &\rightarrow H^{s}(\partial \Omega), \  -1\leq s\leq 0, \\
\end{aligned}
\label{e107}
\end{equation}
and the single layer potential $\mathbf{S}_{\kappa_0} $ is invertible provided $\kappa_{0}^{2} $ is not an eigenvalue for the Dirichlet Laplacian. \\
We begin by taking note of the following result on the smoothing properties of the operators $\mathbf{S}_{\kappa_0}-\mathbf{S}_{0} $ and $\mathbf{K}_{\kappa_0}^{*}-\mathbf{K}_{0}^{*} $.
\begin{lemma}\label{map-prop}
The following mapping properties hold true.
\begin{equation}
\begin{aligned}
&\mathbf{S}_{\kappa_0}-\mathbf{S}_{0}:  H^{-\frac{1}{2}}(\partial \Omega) \rightarrow H^{\frac{5}{2}}(\partial \Omega), \\
&\mathbf{K}_{\kappa_0}^{*}-\mathbf{K}_{0}^{*}:H^{-\frac{1}{2}}(\partial \Omega) \rightarrow H^{\frac{3}{2}}(\partial \Omega). 
\end{aligned}
\notag
\end{equation}
\end{lemma}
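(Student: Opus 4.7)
I would avoid direct kernel estimates and instead lift the question to a global PDE on $\mathbb{R}^{3}$, exploiting cancellation of the boundary singularities of $\mathbf{S}_{\kappa_0}\phi$ and $\mathbf{S}_0\phi$ in their difference. Set $w:=\mathbf{S}_{\kappa_0}\phi-\mathbf{S}_0\phi$, regarded as a function on all of $\mathbb{R}^3$. Both single-layer potentials are continuous across $\partial\Omega$ and share the same normal-derivative jump $-\phi$, so the distributional $\delta_{\partial\Omega}$-sources cancel and I expect to obtain
\begin{equation*}
 (\Delta+\kappa_0^2)\,w \;=\; -\kappa_0^2\,\mathbf{S}_0\phi \qquad \text{in } \mathcal{D}'(\mathbb{R}^3).
\end{equation*}
This cancellation is the heart of the argument: the difference of two distributions each supported on $\partial\Omega$ leaves a genuine $L^2_{loc}$ (indeed $H^1_{loc}$) source.

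\textbf{Gaining two derivatives via elliptic regularity.} For $\phi\in H^{-1/2}(\partial\Omega)$, the standard mapping $\mathbf{S}_0:H^{-1/2}(\partial\Omega)\to H^{1}_{loc}(\mathbb{R}^3)$ would give $-\kappa_0^2\mathbf{S}_0\phi\in H^{1}_{loc}(\mathbb{R}^3)$. Since $w$ is a priori in $H^{1}_{loc}(\mathbb{R}^3)$, interior elliptic regularity for $\Delta+\kappa_0^2$, applied in a tubular neighbourhood of $\partial\Omega$ via a smooth cut-off, would then upgrade $w$ to $H^{3}_{loc}(\mathbb{R}^3)$. (The Helmholtz operator is not coercive globally, but only local regularity near $\partial\Omega$ is needed here.)

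\textbf{Reading off the two mapping properties.} From the trace theorem for $H^{3}$-functions on the smooth boundary $\partial\Omega$, one has $w|_{\partial\Omega}\in H^{5/2}(\partial\Omega)$ and $\partial_\nu w|_{\partial\Omega}\in H^{3/2}(\partial\Omega)$. The first identity is exactly the claim $(\mathbf{S}_{\kappa_0}-\mathbf{S}_0)\phi\in H^{5/2}(\partial\Omega)$. For the second, I would invoke the classical jump relation $\partial_\nu(\mathbf{S}_\kappa\phi)|_{\pm}=\pm\tfrac12\phi+\mathbf{K}_{\kappa}^{*}\phi$ at both $\kappa=\kappa_0$ and $\kappa=0$; the $\pm\tfrac12\phi$ contributions cancel in the difference, leaving $\partial_\nu w|_{\partial\Omega}=(\mathbf{K}_{\kappa_0}^{*}-\mathbf{K}_0^{*})\phi\in H^{3/2}(\partial\Omega)$. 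Continuity of each of the two operators follows by tracking constants through the elliptic-regularity and trace estimates above.

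\textbf{Main obstacle.} None of the individual steps is deep, so this is largely bookkeeping. The delicate point is the distributional identity in the first step: for $\phi$ merely in $H^{-1/2}(\partial\Omega)$, both $(\Delta+\kappa_0^2)\mathbf{S}_{\kappa_0}\phi=-\phi\,\delta_{\partial\Omega}$ and $\Delta\mathbf{S}_0\phi=-\phi\,\delta_{\partial\Omega}$ must be interpreted in a low-regularity distribution class, and I must verify that the identical single-layer jumps genuinely annihilate so that $w$ inherits \emph{global} (not merely piecewise) $H^3$ regularity across $\partial\Omega$. Once this is checked, everything else reduces to standard interior regularity and trace theorems.
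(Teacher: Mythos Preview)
Your argument is correct and in fact slightly more unified than the paper's. The paper cites Steinbach for the $\mathbf{S}_{\kappa_0}-\mathbf{S}_0$ mapping, then derives the $\mathbf{K}_{\kappa_0}^{*}-\mathbf{K}_0^{*}$ result by working \emph{one-sidedly} in $\Omega$: with $W_{\kappa_0}:=\mathbf{S}_{\kappa_0}u$ and $W_0:=\mathbf{S}_0u$, it observes $\Delta(W_{\kappa_0}-W_0)=-\kappa_0^2\mathbf{S}_{\kappa_0}u\in H^1(\Omega)$ together with the Dirichlet trace $W_{\kappa_0}-W_0|_{\partial\Omega}\in H^{5/2}$ (from the first part), and then invokes elliptic regularity for the Dirichlet problem to get $W_{\kappa_0}-W_0\in H^3(\Omega)$ and hence $\partial_\nu(W_{\kappa_0}-W_0)|_{-}\in H^{3/2}$. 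Your route differs in that you work \emph{globally} across $\partial\Omega$: the cancellation of the identical single-layer jumps makes $w$ solve $(\Delta+\kappa_0^2)w=-\kappa_0^2\mathbf{S}_0\phi$ in all of $\mathbb{R}^3$, so \emph{interior} elliptic regularity in a tubular neighbourhood already yields $w\in H^3_{loc}$, and both mapping properties drop out simultaneously from the trace theorem. Your version is self-contained (no need to quote the reference for the first claim) and avoids boundary regularity in favour of the simpler interior estimate; the paper's version, in exchange, sidesteps the distributional-cancellation step you flagged as the main obstacle by never crossing $\partial\Omega$. Both are short and valid.
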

\begin{proof}
The mapping property for $\mathbf{S}_{\kappa_0}-\mathbf{S}_0 $ is given in Section 6.9, \cite{St}. To establish the mapping properties of $\mathbf{K}_{\kappa_0}^{*}-\mathbf{K}_{0}^{*} $ we proceed as follows. \\
Let $u \in H^{-\frac{1}{2}}(\partial \Omega) $ and denote $W_{\kappa_0}:=\mathbf{S}_{\kappa_0}u,\ W_{0}:=\mathbf{S}_{0}u $. Then using the mapping properties of the single layer potential $\mathbf{S}_{\kappa_0} $ and the operator $\mathbf{S}_{\kappa_0}-\mathbf{S}_0 $, it follows that $W_{\kappa_0}-W_0$ satisfies 
\begin{equation}
\begin{aligned}
&\Delta (W_{\kappa_0}-W_0)=-\kappa_{0}^{2} \mathbf{S}_{\kappa_0}u \in H^{1}(\Omega),\\
&W_{\kappa_0}-W_{0}\Big\vert_{\partial \Omega}\in H^{\frac{5}{2}}(\partial \Omega).
\end{aligned}
\notag
\end{equation}
Using elliptic regularity, this implies that $W_{\kappa_0}-W_0 \in H^{3}(\Omega). $ This further implies that
\begin{equation}
(\mathbf{K}_{\kappa_0}^{*}-\mathbf{K}_{0}^{*})u= \frac{\partial(W_{\kappa_0}-W_0)}{\partial \nu}\Big\vert_{-} \in H^{\frac{3}{2}}(\partial \Omega), 
\notag
\end{equation}
whence it follows that $\mathbf{K}_{\kappa_0}^{*}-\mathbf{K}_{0}^{*}: H^{-\frac{1}{2}}(\partial \Omega) \rightarrow H^{\frac{3}{2}}(\partial \Omega) $.
\end{proof}
The next result provides us with an estimate of the trace of the total field $u_{a}^t $ on the boundary $ \partial \Omega$ in terms of the parameter $h_{*}=a^{1-s-h_1}$ in the regime $s+h_1>1 $.
\begin{theorem}\label{vol-blow}
Assume that $\kappa_{0}^{2} $ is not an eigenvalue for the Dirichlet laplacian in $\Omega $. 
Then for sufficiently small $a $, the total field corresponding to the scattering problem \eqref{scat-1a}-\eqref{scat-1c} satisfies the estimate
\begin{equation}
\begin{aligned}
&\norm{u^t_a}_{H^{\alpha}(\Omega)}=\mathcal{O}\left(a^{\frac{s+h_1-1}{2}(1-\alpha)}\right),\ \alpha \in [0,1].
\end{aligned}
\notag
\end{equation}
\end{theorem}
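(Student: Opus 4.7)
Since $h_* = a^{1-s-h_1}$ and $s+h_1 > 1$, we have $h_*=h^{-2}$ with $h := a^{(s+h_1-1)/2} \to 0$. Because $l_M > 0$ in the regime of interest, the coefficient $V_0 = K^M \overline{\mathbf{C}}$ is bounded below by a positive constant. The plan is to reduce the scattering problem to a boundary value problem on $\Omega$, obtain coercivity of the associated sesquilinear form with a large $h^{-2}$ coefficient on the $L^2$ part, and then bootstrap from an $H^1$ bound to a sharp $L^2$ bound before interpolating.

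\textbf{Step 1: BVP reformulation and variational identity.} From the Lippmann--Schwinger equation \eqref{vol-int}, the restriction $Y := u_a^t|_\Omega$ satisfies $(\Delta + \kappa_0^2 - h^{-2}V_0)Y = 0$ in $\Omega$, and the outside representation $u_a^t = u^I - h_*\int_\Omega \Phi_{\kappa_0}(\cdot,y)V_0(y)Y(y)\,dy$ in $\mathbb{R}^3\setminus\overline{\Omega}$ combined with the jump relations of the single-layer potential yields the Robin-type boundary condition in \eqref{key-one-variational-pb}, where invertibility of $\mathbf{S}_{\kappa_0}$ is guaranteed by the spectral assumption on $\kappa_0^2$. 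Multiplying the PDE by $\overline{Y}$ and integrating by parts gives the variational identity \eqref{key-one-variational-formulation} with $a(\cdot,\cdot)$ as in \eqref{key-one-quadratic-form}.

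\textbf{Step 2: Coercivity of the boundary form.} The crucial step is to establish \eqref{key-one}, namely that the boundary contribution is essentially non-negative up to lower-order $L^2$ terms. I would split the boundary operator as $B = \mathbf{S}_0^{-1}[-\tfrac{1}{2}\mathrm{Id}+\mathbf{K}_0] + R$, where $R$ collects the remainders. By Lemma \ref{map-prop}, both $\mathbf{S}_{\kappa_0}-\mathbf{S}_0$ and $\mathbf{K}_{\kappa_0}^*-\mathbf{K}_0^*$ gain at least two derivatives on $H^{-1/2}(\partial\Omega)$, so $R:H^{1/2}(\partial\Omega)\to H^{1/2}(\partial\Omega)$ is smoothing. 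The static part is exactly the Steklov--Poincar\'e (Dirichlet-to-Neumann) operator for $-\Delta$ on $\Omega$, which is non-negative on $H^{1/2}(\partial\Omega)$; the $R$ part, combined with the trace inequality, produces the $-\epsilon\|Y\|_{H^1}^2 - C(\epsilon)\|Y\|_{L^2}^2$ correction via a Young/interpolation argument. This is the main obstacle: controlling the smoothing corrections uniformly in $a$ and matching the power $h^{-2}$ so that the bad $L^2$ term can later be absorbed.

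\textbf{Step 3: $H^1$ bound.} Combining \eqref{key-one} with the lower bound $V_0 \geq c > 0$ yields \eqref{key-one-coercivity}: choosing $\epsilon$ small and then $h$ small, the factor in front of $\|Y\|_{L^2}^2$ becomes $\gtrsim h^{-2}$, hence positive, and the form is coercive on $H^1(\Omega)$ with a constant bounded away from zero. The right-hand side satisfies $|\langle \mathbf{S}_{\kappa_0}^{-1}u^I, Y\rangle_{-1/2,1/2}| \leq C\|u^I\|_{H^{1/2}(\partial\Omega)}\|Y\|_{H^{1/2}(\partial\Omega)} \leq C\|Y\|_{H^1(\Omega)}$ by the trace theorem and the mapping properties \eqref{e107}. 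Absorption into the coercivity gives $\|Y\|_{H^1(\Omega)} = \mathcal{O}(1)$.

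\textbf{Step 4: Sharp $L^2$ bound and interpolation.} Using the boundary condition, $\tfrac{\partial Y}{\partial\nu} = \mathbf{S}_{\kappa_0}^{-1}[-\tfrac{1}{2}\mathrm{Id}+\mathbf{K}_{\kappa_0}]Y + \mathbf{S}_{\kappa_0}^{-1}u^I$ is bounded in $H^{-1/2}(\partial\Omega)$ by $\mathcal{O}(\|Y\|_{H^{1/2}(\partial\Omega)} + 1) = \mathcal{O}(1)$. Multiplying the equation for $Y$ by $\overline{Y}$ and integrating by parts gives
\begin{equation}
\int_\Omega |\nabla Y|^2 + \int_\Omega (h^{-2}V_0 - \kappa_0^2)|Y|^2 = \left\langle \tfrac{\partial Y}{\partial \nu}, Y\right\rangle_{-\frac{1}{2},\frac{1}{2}} = \mathcal{O}(1),
\notag
\end{equation}
so that $h^{-2}\|Y\|_{L^2(\Omega)}^2 = \mathcal{O}(1)$, whence $\|Y\|_{L^2(\Omega)} = \mathcal{O}(h)$. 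A standard interpolation between $\|Y\|_{L^2(\Omega)} = \mathcal{O}(h)$ and $\|Y\|_{H^1(\Omega)} = \mathcal{O}(1)$ yields $\|Y\|_{H^\alpha(\Omega)} = \mathcal{O}(h^{1-\alpha}) = \mathcal{O}(a^{\frac{s+h_1-1}{2}(1-\alpha)})$ for $\alpha\in[0,1]$, and since $u_a^t = Y$ on $\Omega$, the theorem follows.
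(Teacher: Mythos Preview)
Your proposal follows the paper's proof essentially step for step: the reduction to the BVP \eqref{e104}, the variational identity, coercivity after controlling the boundary term, the $H^1$ bound, the bootstrap to $\|Y\|_{L^2}=\mathcal{O}(h)$ via re-integration by parts, and interpolation are all exactly what the paper does.

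One correction in Step~2: the static operator $\mathbf{S}_0^{-1}\bigl[-\tfrac{1}{2}\mathrm{Id}+\mathbf{K}_0\bigr]=\bigl[-\tfrac{1}{2}\mathrm{Id}+\mathbf{K}_0^*\bigr]\mathbf{S}_0^{-1}$ is the \emph{exterior} Dirichlet-to-Neumann map for $-\Delta$ (with decay at infinity), not the interior one on $\Omega$. The exterior DtN is non-positive as a form, which is precisely what makes $-\int_{\partial\Omega}B_0 u\cdot\overline{u}\geq 0$; the interior DtN is non-negative and would give the wrong sign here. The paper avoids this potential confusion by using the Calder\'on identity to rewrite the boundary bilinear form in terms of $\mathbf{S}_{\kappa_0}^{-1}u$ and then invoking the positivity of $\tfrac{1}{2}\mathrm{Id}-\mathbf{K}_0^*$ in the $\langle\mathbf{S}_0\cdot,\cdot\rangle$ inner product (see \eqref{e106}--\eqref{e110}); this is equivalent to your DtN argument once the domain and sign are fixed.
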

\begin{proof}
Let us define the semi-classical parameter $h:=a^{\frac{s+h_1-1}{2}} $. Since $s+h_1>1 $ and $a$ is small, it follows that the parameter $h <<1 $. \\
Our first step is to transform the scattering problem into an equivalent boundary-value problem posed in $ \Omega$. To do so, we recall that using the Green's formula, we can write 
\begin{equation}
\int_{\Omega} u_{a}^{t} \Delta \Phi_{\kappa_0}(\cdot, z) -\Phi_{\kappa_0}(\cdot, z) \Delta u_{a}^{t} = \int_{\partial \Omega} u_{a}^{t} \frac{\partial \Phi_{\kappa_0}}{\partial \nu}(\cdot,z)- \Phi_{\kappa_0}(\cdot,z)\frac{\partial u_{a}^t}{\partial \nu}, \ z\in \Omega^{c}.
\label{e100}
\end{equation}
Now, we recall that \[\Delta \Phi_{\kappa_0}+\kappa_{0}^{2} \Phi_{\kappa_0}=-\delta, \ \text{in}\ \mathbb{R}^{3}, \]
and from the Lippmann-Schwinger equation \eqref{vol-int}, it follows that
\begin{equation}
(\Delta+\kappa_{0}^2-h^{-2} V_{0})u_{a}^{t}=0, \ \text{in} \ \mathbb{R}^3.
\label{e101}
\end{equation}
Using these relations in \eqref{e100}, we have
\begin{equation}
\begin{aligned}
\int_{\partial \Omega} u_{a}^{t} \frac{\partial \Phi_{\kappa_0}}{\partial \nu}(\cdot,z)- \Phi_{\kappa_0}(\cdot,z)\frac{\partial u_{a}^t}{\partial \nu}&=-\kappa_{0}^{2} \int_{\Omega} u_{a}^{t} \Phi_{\kappa_0}(\cdot,z)-\int_{\Omega} \Phi_{\kappa_0}(\cdot,z) \left(-\kappa_{0}^{2} +h^{-2} V_{0} \right) u_{a}^{t}\\
&=-\int_{\Omega} h^{-2} V_{0} u_{a}^{t} \Phi_{\kappa_0}(\cdot,z)=u_{a}^{t}(z)-u^{I}(z).
\end{aligned}
\label{e102}
\end{equation}
Next we take the trace on $\partial \Omega $, for any $z \in \partial \Omega $, and deduce that
\begin{equation}
\begin{aligned}
&u_{a}^{t}(z)-u^{I}(z)= -\int_{\partial \Omega} \Phi_{\kappa_0}(\cdot,z) \frac{\partial u_{a}^t}{\partial \nu}+\int_{\partial \Omega} u_{a}^{t} \frac{\partial \Phi_{\kappa_0}}{\partial \nu}(\cdot,z)+\frac{1}{2} u_{a}^{t}(z)\\
\Rightarrow &\int_{\partial \Omega} \Phi_{\kappa_0}(\cdot,z) \frac{\partial u_{a}^t}{\partial \nu}+\frac{1}{2} u_{a}^{t}(z)-\int_{\partial \Omega} u_{a}^{t} \frac{\partial \Phi_{\kappa_0}}{\partial \nu}(\cdot,z)=u^{I}(z)\\
\Rightarrow &\int_{\partial \Omega} \Phi_{\kappa_0}(\cdot,z) \frac{\partial u_{a}^t}{\partial \nu}-\left[-\frac{1}{2}Id+\mathbf{K}_{\kappa_0} \right]u_{a}^{t}=u^{I}\\
\Rightarrow &\frac{\partial u_{a}^t}{\partial \nu}-\mathbf{S}_{\kappa_0}^{-1}\left[-\frac{1}{2}Id+\mathbf{K}_{\kappa_0} \right]u_{a}^{t}=\mathbf{S}_{\kappa_0}^{-1}u^{I}. 
\end{aligned}
\label{e103}
\end{equation}
Therefore $u_{a}^t$ satisfies the following boundary-value problem
\begin{equation}
\begin{aligned}
&(\Delta+\kappa_0^2-h^{-2} V_{0})u_{a}^{t}=0, \ \text{in} \ \Omega, \\
&\frac{\partial u_{a}^t}{\partial \nu}-\mathbf{S}_{\kappa_0}^{-1}\left[-\frac{1}{2}Id+\mathbf{K}_{\kappa_0} \right]u_{a}^{t}=\mathbf{S}_{\kappa_0}^{-1}u^{I}, \ \text{on}\ \partial\Omega.
\end{aligned}
\label{e104}
\end{equation}
The variational formulation of the problem \eqref{e104} can be written as follows: \\
Find a unique $u \in H^{1}(\Omega) $ such that 
\begin{equation}
a(u,v)= \left\langle \mathbf{S}_{\kappa_0}^{-1}u^{I},v \right\rangle_{-\frac{1}{2},\frac{1}{2}}\ \forall v \in H^{1}(\Omega),
\notag
\end{equation}
where
\begin{equation}
a(u,v):= \int_{\Omega} \nabla u \cdot \nabla \overline{v}+\int_{\Omega} \left(-\kappa_0^{2} +h^{-2}V_{0} \right)u \cdot \overline{v}-\int_{\partial \Omega} Bu \cdot \overline{v},
\label{e105}
\end{equation}
with $Bu:=\mathbf{S}_{\kappa_0}^{-1}\left[-\frac{1}{2}Id+\mathbf{K}_{\kappa_0} \right]u, \ u \in H^{1}(\Omega) $. \\
To deal with the term $-\int_{\partial \Omega} Bu\cdot \overline{u} $, we split it as follows:
\begin{equation}
\begin{aligned}
-\int_{\partial \Omega} Bu\cdot \overline{u}&=\int_{\partial \Omega} \mathbf{S}_{\kappa_0}^{-1}\left[\frac{1}{2}Id-\mathbf{K}_{\kappa_0} \right]u \cdot \overline{u}\\
&=\int_{\partial \Omega} \overline{\mathbf{S}_{\kappa_0}\left(\mathbf{S}_{\kappa_0}^{-1}u \right)} \left[\frac{1}{2}Id-\mathbf{K}_{\kappa_0}^{*} \right]\mathbf{S}_{\kappa_0}^{-1} u \ \left(\text{since}\ \mathbf{K}_{\kappa_0}\mathbf{S}_{\kappa_0}=\mathbf{S}_{\kappa_0}\mathbf{K}_{\kappa_0}^{*}\right)\\
&=\int_{\partial \Omega} \overline{\mathbf{S}_{0}\left(\mathbf{S}_{\kappa_0}^{-1} u \right)} \left[\frac{1}{2}Id-\mathbf{K}_{\kappa_0}^{*} \right]\mathbf{S}_{\kappa_0}^{-1} u+\int_{\partial \Omega} \overline{\left(\mathbf{S}_{\kappa_0}-\mathbf{S}_0 \right) \left(\mathbf{S}_{\kappa_0}^{-1} u \right)} \left[\frac{1}{2}Id-\mathbf{K}_{\kappa_0}^{*} \right]\mathbf{S}_{\kappa_0}^{-1} u  \\
&=\int_{\partial \Omega} \mathbf{S}_{0}\overline{\left(\mathbf{S}_{\kappa_0}^{-1} u \right)} \left[\frac{1}{2}Id-\mathbf{K}_{0}^{*} \right]\mathbf{S}_{\kappa_0}^{-1} u+
\int_{\partial \Omega} \mathbf{S}_{0}\overline{\left(\mathbf{S}_{\kappa_0}^{-1} u \right)} \left[\mathbf{K}^{*}_{0}-\mathbf{K}_{\kappa_0}^{*} \right]\mathbf{S}_{\kappa_0}^{-1} u \\
&\quad +\int_{\partial \Omega} \overline{\left(\mathbf{S}_{\kappa_0}-\mathbf{S}_0 \right) \left(\mathbf{S}_{\kappa_0}^{-1} u \right)} \left[\frac{1}{2}Id-\mathbf{K}_{\kappa_0}^{*} \right]\mathbf{S}_{\kappa_0}^{-1} u. 
\end{aligned}
\label{e106}
\end{equation}
Using the properties \eqref{e107} and lemma \ref{map-prop}, we deal with the last two terms in \eqref{e106} in the following manner.\\
For $u \in H^{\frac{1}{2}}(\partial \Omega) $, choosing $s\in \left(\frac{1}{2},1 \right) $, we can write
\begin{equation}
\begin{aligned}
\left\vert \int_{\partial \Omega} \overline{\left(\mathbf{S}_{\kappa_0}-\mathbf{S}_0 \right) \left(\mathbf{S}_{\kappa_0}^{-1} u \right)} \left[\frac{1}{2}Id-\mathbf{K}_{\kappa_0}^{*} \right]\mathbf{S}_{\kappa_0}^{-1} u \right\vert &\lesssim \norm{\overline{\left(\mathbf{S}_{\kappa_0}-\mathbf{S}_0 \right) \left(\mathbf{S}_{\kappa_0}^{-1} u \right)}}_{H^{s}(\partial \Omega)} \cdot \norm{ [\frac{1}{2}Id-\mathbf{K}_{\kappa_0}^{*} ]\mathbf{S}_{\kappa_0}^{-1} u }_{H^{-s}(\partial \Omega)}\\
&\lesssim \norm{\overline{\mathbf{S}_{\kappa_0}^{-1} u} }_{H^{-\frac{1}{2}}(\partial \Omega)} \cdot \norm{ \mathbf{S}_{\kappa_0}^{-1} u }_{H^{-s}(\partial \Omega)}\\
&\lesssim \norm{ u }_{H^{\frac{1}{2}}(\partial \Omega)} \cdot \norm{ u }_{H^{-s+1}(\partial \Omega)}\\
&\lesssim \norm{ u }_{H^{1}(\Omega)} \cdot \norm{ u }_{H^{-s+\frac{3}{2}}( \Omega)}\\
&\lesssim \epsilon \norm{ u }^{2}_{H^{1}(\Omega)} +\frac{1}{4 \epsilon} \norm{ u }^{2}_{H^{-s+\frac{3}{2}}( \Omega)}\\
&\lesssim \epsilon \norm{ u }^{2}_{H^{1}(\Omega)}+\frac{1}{4\epsilon} \left[ \norm{ u }^{2(\frac{3}{2}-s)}_{H^{1}( \Omega)} \cdot \norm{ u }^{2(1+s-\frac{3}{2})}_{L^{2}( \Omega)} \right] \\
&\lesssim \frac{5\epsilon}{4} \norm{ u }^{2}_{H^{1}(\Omega)}+ C(\epsilon)\norm{ u }^{2}_{L^{2}(\Omega)},
\end{aligned}
\label{e108}
\end{equation}
where $\epsilon $ is sufficiently small enough and $C(\epsilon) $ depends only on $\epsilon $ and $s$.\\
Similarly since $u \in H^{\frac{1}{2}}(\partial \Omega) $, we have
\begin{equation}
\begin{aligned}
\left\vert \int_{\partial \Omega} \mathbf{S}_{0}\overline{\left(\mathbf{S}_{\kappa_0}^{-1} u \right)} \left[\mathbf{K}^{*}_{0}-\mathbf{K}_{\kappa_0}^{*} \right]\mathbf{S}_{\kappa_0}^{-1} u \right\vert &\leq \norm{ \mathbf{S}_{0} \overline{\left(\mathbf{S}_{\kappa_0}^{-1} u  \right)}}_{H^{-s}(\partial \Omega)} \cdot \norm{\left[\mathbf{K}^{*}_{0}-\mathbf{K}_{\kappa_0}^{*} \right]\mathbf{S}_{\kappa_0}^{-1} u}_{H^{s}(\partial \Omega)}, \ s\in (0,\frac{1}{2})\\
&\lesssim \norm{ \mathbf{S}_{0} \overline{\left(\mathbf{S}_{\kappa_0}^{-1} u \right)}}_{H^{s}(\partial \Omega)} \cdot \norm{\left[\mathbf{K}^{*}_{0}-\mathbf{K}_{\kappa_0}^{*} \right]\mathbf{S}_{\kappa_0}^{-1} u}_{H^{\frac{1}{2}}(\partial \Omega)}\\
&\lesssim \norm{u}_{H^{s}(\partial \Omega)} \cdot \norm{\mathbf{S}_{\kappa_0}^{-1}u}_{H^{-\frac{1}{2}}(\partial \Omega)} \\
&\lesssim \norm{u}_{H^{s}(\partial \Omega)} \cdot \norm{u}_{H^{\frac{1}{2}}(\partial \Omega)} \\
&\lesssim \norm{u}_{H^{s+\frac{1}{2}}(\Omega)} \cdot \norm{u}_{H^{1}(\Omega)} \\
&\lesssim \epsilon \norm{u}^{2}_{H^{1}(\Omega)}+\frac{1}{4 \epsilon} \norm{u}^{2}_{H^{s+\frac{1}{2}}(\Omega)}\\
&\lesssim \epsilon \norm{ u }^{2}_{H^{1}(\Omega)}+\frac{1}{4\epsilon} \left[ \norm{ u }^{2(s+\frac{1}{2})}_{H^{1}( \Omega)} \cdot \norm{ u }^{2(1-s-\frac{1}{2})}_{L^{2}( \Omega)} \right] \\
&\lesssim \frac{5\epsilon}{4} \norm{ u }^{2}_{H^{1}(\Omega)}+ C(\epsilon)\norm{ u }^{2}_{L^{2}(\Omega)}.
\end{aligned}
\label{e109}
\end{equation}
For the first term of \eqref{e106}, we use the fact that $\frac{1}{2}Id-\mathbf{K}_{0}^{*} $ is positive definite on $H^{-\frac{1}{2}}(\partial \Omega) $ equipped with the scalar product $\langle \mathbf{S_{0}}u,v\rangle $, to deduce that
\begin{equation}
\begin{aligned}
&Re \int_{\partial \Omega} \mathbf{S}_{0}\left(\overline{\mathbf{S}_{\kappa_0}^{-1} u} \right) \left[\frac{1}{2}Id-\mathbf{K}_{0}^{*} \right]\mathbf{S}_{\kappa_0}^{-1} u \\
&\qquad=\int_{\partial \Omega} \mathbf{S}_{0}\left(Re [\mathbf{S}_{\kappa_0}^{-1} u] \right) \left[\frac{1}{2}Id-\mathbf{K}_{0}^{*} \right]Re[\mathbf{S}_{\kappa_0}^{-1} u]+ \int_{\partial \Omega} \mathbf{S}_{0}\left(Im[\mathbf{S}_{\kappa_0}^{-1} u] \right) \left[\frac{1}{2}Id-\mathbf{K}_{0}^{*} \right]Im[\mathbf{S}_{\kappa_0}^{-1} u] \\
&\qquad \geq C \left[\norm{Re[\mathbf{S}_{\kappa_0}^{-1} u]}^{2}_{H^{\frac{1}{2}}(\partial \Omega)}+\norm{Im[\mathbf{S}_{\kappa_0}^{-1} u]}^{2}_{H^{\frac{1}{2}}(\partial \Omega)} \right]\\
&\qquad \geq C \norm{\mathbf{S}_{\kappa_0}^{-1}u}_{H^{-\frac{1}{2}}(\partial \Omega)}^{2} \geq C \norm{u}_{H^{\frac{1}{2}}(\partial \Omega)}^{2}.
\end{aligned}
\label{e110}
\end{equation}
Using \eqref{e108}-\eqref{e110} in \eqref{e106}, we can write
\begin{equation}
\begin{aligned}
Re\left[-\int_{\partial \Omega} Bu\cdot \overline{u} \right]&\geq C \norm{u}^{2}_{H^{\frac{1}{2}}(\partial \Omega)}-\frac{5\epsilon}{4} \norm{u}^{2}_{H^{1}(\Omega)}-C(\epsilon)\norm{u}_{L^{2}(\Omega)}^{2} \geq -\frac{5\epsilon}{4} \norm{u}^{2}_{H^{1}(\Omega)}-C(\epsilon)\norm{u}_{L^{2}(\Omega)}^{2}.
\end{aligned}
\label{e111}
\end{equation}
Using this in \eqref{e105}, we see that there exists $h_0 <<1 $ such that for any $h<h_0 $, we have
\begin{equation}
\begin{aligned}
Re \ a(u,u)&\geq \int_{\Omega} \vert \nabla u \vert^{2}+\int_{\Omega} \left(-\kappa_0^2+h^{-2}V_{0} \right)\vert u\vert^{2}-\frac{5\epsilon}{4} \norm{u}^{2}_{H^{1}(\Omega)}-C(\epsilon)\norm{u}_{L^{2}(\Omega)}^{2}\\
&\geq (1-\frac{5\epsilon}{4}) \int_{\Omega} \vert \nabla u \vert^{2}+\left(-\kappa_0^2+Ch^{-2}-\frac{5\epsilon}{4}-C(\epsilon) \right) \int_{\Omega} \vert u \vert^2 \geq \tilde{C} \norm{u}^{2}_{H^{1}(\Omega)}, 
\end{aligned}
\label{e112}
\end{equation}
where $\tilde{C} $ is a positive constant.\\
Therefore using the Lax-Milgram lemma, we infer that the boundary value problem has a unique weak solution which is $u_{a}^t$ and it satisfies the estimate
\begin{equation}
\norm{u_{a}^t}_{H^{1}(\Omega)} \lesssim \norm{u^I}_{H^{1}(\Omega)}.
\label{e113}
\end{equation}
The estimate \eqref{e113} immediately implies that the trace satisfies $\norm{u_{a}^t}_{H^{\frac{1}{2}}(\partial \Omega)}=\mathcal{O}(1) $.\\
Now integrating by parts in \eqref{e104} and using the estimate for $\norm{u_{a}^t}_{H^{\frac{1}{2}}(\partial \Omega)}$, and the fact \[\frac{\partial u_{a}^t}{\partial \nu}=\mathbf{S}_{\kappa_0}^{-1}\left[-\frac{1}{2}Id+\mathbf{K}_{\kappa_0} \right]u_{a}^{t}+\mathbf{S}_{\kappa_0}^{-1}u^{I}, \ \text{on}\ \partial\Omega ,\] we observe that
\[\int_{\Omega} \vert \nabla u^{t}_{a}\vert^2+\int_{\Omega} \left(h^{-2}V_{0}-\kappa_{0}^{2} \right) \vert u^{t}_{a}\vert^2 =\int_{\partial \Omega} \frac{\partial u^{t}_{a}}{\partial \nu} u^{t}_{a} =\mathcal{O}(1), \]
whence it follows that $\norm{u^t_a}_{L^2(\Omega)}=\mathcal{O}(h) $. Therefore, using interpolation, we have the estimate
\[\norm{u^t_a}_{H^{\alpha}(\Omega)}=\mathcal{O}(h^{1-\alpha}),\ \alpha \in [0,1]. \]
\end{proof}
\begin{itemize}
\item Using theorem \ref{vol-blow}, we can compare the far-field $u^{\infty}_{a} $ corresponding to the scattering problem \eqref{scat-1a}-\eqref{scat-1c} to the far-field $u^{\infty}_{D} $ corresponding to the Dirichlet scattering problem
\begin{equation}
\left(\Delta+\kappa_{0}^{2} \right)u^{t}_{D}=0, \ \text{in}\ \mathbb{R}^{3}\setminus \overline{\Omega},
\label{scat-2a}
\end{equation}
\begin{equation}
u^{t}_{D}:=u^{s}_{D}+e^{i\kappa_{0} x \cdot \theta}=0,\ \text{on}\ \partial \Omega,
\label{scat-2b}
\end{equation}
\begin{equation}
\frac{\partial u^{s}_{D}}{\partial \nu}-i\kappa_0 u^{s}_{D}=o\left(\frac{1}{\vert x \vert} \right),\ \vert x \vert \rightarrow \infty.
\label{scat-2c}
\end{equation}
To see this, we observe that from theorem \ref{vol-blow}, using trace order estimates, we can write 
\[\norm{u^t_a}_{H^{\alpha}(\partial \Omega)}=\mathcal{O}\left(a^{\frac{s+h_1-1}{2} \left(\frac{1}{2}-\alpha\right)}\right),\ \alpha \in [0,\frac{1}{2}], \] whence it follows that $\norm{u^{t}_{a} \Big\vert_{\partial \Omega}}_{L^{2}(\partial \Omega)}=\norm{u^{s}_{a}(x)+e^{i \kappa_{0} x\cdot\theta} \Big\vert_{\partial \Omega}}_{L^{2}(\partial \Omega)} =\mathcal{O}\left(a^{\frac{s+h_1-1}{4}}\right) $.\\
Now since $u^{s}_{a} $ satisfies 
$\left(\Delta+\kappa_{0}^{2} \right)u^{s}_{a}=0, \ \text{in}\ \mathbb{R}^{3}\setminus \overline{\Omega} $ together with the radiation conditions, and boundary values satisfying $\norm{u^{s}_{a}(x)+e^{i \kappa_{0} x\cdot\theta} \Big\vert_{\partial \Omega}}_{L^{2}(\partial \Omega)} =\mathcal{O}(a^{\frac{s+h_1-1}{4}}) $, 
the well-posedness of the forward scattering problem in the exterior domain $\mathbb{R}^{3}\setminus \overline{\Omega} $ implies that the corresponding far-fields satisfy the estimates
\begin{equation}
u^{\infty}_{a}(\hat{x},\theta)-u^{\infty}_{D}(\hat{x},\theta) =\mathcal{O}(a^{\frac{s+h_1-1}{4}}).
\label{vol-est-1000}
\end{equation}
\end{itemize}
Next we prove estimates for the $L^{\infty}$ norms of $Y$ and $\nabla Y$ which shall be used to prove the required asymptotic approximations.
\begin{proposition}\label{blow-est-vol}
The solution $Y$ to the volume integral equation \eqref{vol-int} satisfies the following estimates.\\
\begin{itemize}
\item If $h_{*}=\mathcal{O}(1),\ a \rightarrow 0 $, then{\footnote{In this case we do not need the condition that $\kappa_{0}^{2} $ is not an eigenvalue for the Dirichlet Laplacian in $\Omega $.}} 
\begin{equation}
\norm{Y}_{L^{\infty}(\Omega)}=\mathcal{O}(1),\ \norm{\nabla Y}_{L^{\infty}(\Omega)}=\mathcal{O}(1).
\label{vol-est-1}
\end{equation}
\item If $h_{*}=a^{1-h_1-s}$, $s+h_1>1 $, and $\kappa_{0}^{2} $ is not an eigenvalue for the Dirichlet laplacian in $\Omega $, then
\begin{equation}
\norm{Y}_{L^{\infty}(\Omega)}=\mathcal{O}(a^{\frac{1-s-h_1}{2}}),\ \norm{\nabla Y}_{L^{\infty}(\Omega)}=\mathcal{O}(a^{\frac{1-s-h_{1}}{2} (1+\alpha)}),\ a \rightarrow 0, 
\label{vol-est-2}
\end{equation}
where $\alpha \in (\frac{1}{2},1] $. 
\end{itemize}
\end{proposition}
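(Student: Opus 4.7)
My plan is to exploit the Lippmann--Schwinger representation \eqref{vol-int} and bootstrap, using the fact that in $\mathbb{R}^3$ the kernel $\Phi_{\kappa_0}(z,y)\lesssim|z-y|^{-1}$ is locally in $L^p$ for every $p<3$, while $|\nabla_z\Phi_{\kappa_0}(z,y)|\lesssim|z-y|^{-2}$ is locally in $L^p$ for every $p<3/2$. The two regimes are then handled with different a priori input on $Y$.

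\textbf{Case 1: $h_*=\mathcal{O}(1)$.} Unique solvability in $L^2(\Omega)$ (already established via Fredholm) gives $\|Y\|_{L^2(\Omega)}=\mathcal{O}(1)$. Inserting this into \eqref{vol-int} and applying H\"older's inequality with $\Phi_{\kappa_0}(z,\cdot)\in L^2(\Omega)$ uniformly in $z\in\Omega$ yields $\|Y\|_{L^\infty(\Omega)}=\mathcal{O}(1)$. Differentiating \eqref{vol-int} in $z$ and applying H\"older with the bound $\|Y\|_{L^\infty(\Omega)}=\mathcal{O}(1)$ just obtained, together with the fact that $\nabla_z\Phi_{\kappa_0}(z,\cdot)\in L^1(\Omega)$ uniformly in $z$, produces $\|\nabla Y\|_{L^\infty(\Omega)}=\mathcal{O}(1)$.

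\textbf{Case 2: $h_*=a^{1-h_1-s}$, $s+h_1>1$.} I set $h:=a^{(s+h_1-1)/2}$, so that $h_*=h^{-2}$ and $h\to 0$ as $a\to 0$. Theorem \ref{vol-blow} (applied to $Y=u_a^t\vert_{\Omega}$) supplies $\|Y\|_{L^2(\Omega)}=\mathcal{O}(h)$ and $\|Y\|_{H^1(\Omega)}=\mathcal{O}(1)$, whence Sobolev embedding gives $\|Y\|_{L^6(\Omega)}=\mathcal{O}(1)$. For the $L^\infty$-bound I plug the $L^2$-estimate into \eqref{vol-int},
\[
|Y(z)|\le |u^I(z)|+h_*\,\|\Phi_{\kappa_0}(z,\cdot)\|_{L^2(\Omega)}\,\|V_0\|_{L^\infty(\Omega)}\,\|Y\|_{L^2(\Omega)}=\mathcal{O}(1)+\mathcal{O}(h^{-2}\cdot h)=\mathcal{O}(h^{-1}),
\]
matching the claimed $\mathcal{O}(a^{(1-s-h_1)/2})$. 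For the gradient I differentiate \eqref{vol-int} in $z$ and estimate by H\"older with exponents $(p,p')$ chosen so that $2p'<3$, i.e.\ $p>3$,
\[
|\nabla Y(z)|\le |\nabla u^I(z)|+h_*\,\|\nabla_z\Phi_{\kappa_0}(z,\cdot)\|_{L^{p'}(\Omega)}\,\|V_0\|_{L^\infty(\Omega)}\,\|Y\|_{L^p(\Omega)}.
\]
Interpolation between the $L^2$- and $L^6$-bounds gives $\|Y\|_{L^p(\Omega)}\lesssim h^{(6-p)/(2p)}$ for $p\in[2,6]$. Taking $p\in(3,6]$ I obtain $\|\nabla Y\|_{L^\infty(\Omega)}=\mathcal{O}(h^{(6-p)/(2p)-2})=\mathcal{O}(h^{-(1+\alpha)})$ with $\alpha=3(p-2)/(2p)\in(1/2,1]$, which is the required estimate.

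\textbf{Main obstacle.} The only non-routine point is the $\nabla Y$-estimate in Case~2: the kernel $\nabla_z\Phi_{\kappa_0}$ is too singular to be paired with the $L^\infty$-bound alone, while pairing it with the $L^2$-bound alone via Cauchy--Schwarz is blocked by the non-integrability of $|z-y|^{-4}$ near $z$. The resolution is to combine the two pieces of information provided by Theorem \ref{vol-blow}: the $L^2$-decay $\mathcal{O}(h)$ and the $H^1$-boundedness (promoted to $L^6$-boundedness by Sobolev), and to interpolate between them so as to fall in the $L^p$-range with $p\in(3,6]$ in which $\nabla\Phi_{\kappa_0}$ lies in $L^{p'}$. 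The admissible window $p\in(3,6]$ is precisely what produces the stated exponent range $\alpha\in(1/2,1]$.
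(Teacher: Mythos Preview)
Your proof is correct and follows essentially the same strategy as the paper: bootstrap the Lippmann--Schwinger equation using the a priori input from Theorem \ref{vol-blow}, pair with the kernel integrability $\Phi_{\kappa_0}\in L^2_{loc}$ and $\nabla\Phi_{\kappa_0}\in L^{p'}_{loc}$ for $p'<3/2$, and interpolate to land in the right $L^p$-range. The only cosmetic differences are that in Case~1 the paper routes through $H^2$ (resp.\ $W^{2,p}$) regularity and Sobolev embedding whereas you estimate the kernel directly, and in Case~2 the paper interpolates in the $H^\alpha$-scale and then applies the Sobolev embedding $H^\alpha\hookrightarrow L^{p'}$ with $\tfrac{1}{p'}=\tfrac{1}{2}-\tfrac{\alpha}{3}$, while you interpolate between $L^2$ and $L^6$; the two computations are equivalent and yield the identical relation $\alpha=3(p-2)/(2p)$.
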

\begin{proof}
The above estimates can be proved following the arguments in \cite{ACKS, CMS}. For the sake of completion, we provide an outline of the proofs here.\\
\begin{itemize} 
\item[(a)] Let us first consider the case $h_{*}=\mathcal{O}(1), \ a \rightarrow 0 $. 
In this case, the invertibility of the integral equation \eqref{vol-int} from $L^{2}(\Omega) $ to $L^{2}(\Omega) $ immediately implies that $\norm{Y}_{L^2(\Omega)}\leq C \norm{u^I}_{L^2(\Omega)} $. Also, from the integral equation \eqref{vol-int}, using the mapping property of the single-layer potential, we can write
\begin{equation}
\begin{aligned}
\norm{Y}_{H^2(\Omega)}&\leq \norm{\mathbf{S}_{\kappa_0}\left[V_{0} Y\right]}_{H^2(\Omega)}+ \norm{u^I}_{H^2(\Omega)}\\
&\leq C \norm{V_{0} Y}_{L^{2}(\Omega)}+\norm{u^I}_{H^2(\Omega)}\leq C \norm{u^I}_{H^2(\Omega)}=\mathcal{O}(1).
\end{aligned}
\notag
\end{equation} 
Using the Sobolev embedding $H^{2}(\Omega) \subset L^{\infty}(\Omega) $, we can now conclude that $\norm{Y}_{L^{\infty}(\Omega)}=\mathcal{O}(1) $. The estimate $\norm{\nabla Y}_{L^{\infty}(\Omega)}=\mathcal{O}(1) $ follows similarly using the $W^{2,p}$ regularity of $Y$ (see \cite{ACKS}).\\
\item[(b)] Next let $h_{*}=a^{1-s-h_1} $. Then from theorem \ref{vol-blow}, it follows that $Y$ satisfies the estimate \[\norm{Y}_{H^{\alpha}(\Omega)}=\mathcal{O}\left( a^{\frac{s+h_1-1}{2}(1-\alpha)}\right), \ \alpha \in [0,1] .\] In particular, for $\alpha=0 $, we arrive at the estimate $\norm{Y}_{L^{2}(\Omega)}=\mathcal{O}\left( a^{\frac{s+h_1-1}{2}}\right) $. \\
Using this in the volume integral equation \eqref{vol-int}, this gives
\begin{equation}
\vert Y(z) \vert = \mathcal{O}\left(1 \right)+ \mathcal{O}\left(a^{1-s-h_1} \right) \mathcal{O}\left(a^{\frac{s+h_1-1}{2}} \right)= \mathcal{O}\left(a^{\frac{1-s-h_1}{2}} \right),
\notag
\end{equation}
since $s+h_1 >1 $,
and therefore $\norm{Y}_{L^{\infty}(\Omega)}=\mathcal{O}\left(a^{\frac{1-s-h_1}{2}}\right) $.\\
Again from the volume integral equation \eqref{vol-int}, we can write
\begin{equation}
\left\vert \nabla Y(z) \right\vert=\mathcal{O}(1)+a^{1-h_1-s} \norm{V_{0}(\cdot) \nabla_{z}\Phi_{\kappa_0}(z,\cdot)}_{L^{p}(\Omega)} \norm{Y}_{L^{p'}(\Omega)}.
\label{vol-est-3}
\end{equation}
Since $\norm{\nabla_{z}\Phi_{\kappa_0}(z,\cdot)}_{L^{p}(\Omega)}<\infty $ for $p<\frac{3}{2} $, we need to estimate $\norm{Y}_{L^{p'}(\Omega)} $ for $p'>3 $.\\
By Sobolev embeddings, we have $\norm{Y}_{L^{p'}(\Omega)} \leq C \norm{Y}_{H^{\alpha}(\Omega)}, \ \frac{1}{p'}=\frac{1}{2}-\frac{\alpha}{3} $ where $\alpha>\frac{1}{2} $. Therefore 
$\norm{Y}_{L^{p'}(\Omega)}=\mathcal{O}\left(a^{\frac{s+h_1-1}{2}(1-\alpha)}\right), \ \alpha>\frac{1}{2} $. Using this in \eqref{vol-est-3}, we deduce that
\begin{equation}
\vert \nabla Y(z) \vert = \mathcal{O}\left(1 \right)+ \mathcal{O}\left(a^{1-s-h_1} \right) \mathcal{O}\left(a^{\frac{s+h_1-1}{2}(1-\alpha)} \right)= \mathcal{O}\left(a^{\frac{1-s-h_1}{2}}(1+\alpha) \right),
\notag
\end{equation}
and hence $\norm{\nabla Y}_{L^{\infty}(\Omega)}=\mathcal{O}\left(a^{\frac{1-s-h_{1}}{2} (1+\alpha)}\right), \ \alpha >\frac{1}{2} $.
\end{itemize}
\end{proof}
\begin{remark}
Note that in \eqref{vol-est-2}, we can take $\alpha $ to be as close to $\frac{1}{2} $ as intended.
\end{remark}

\subsection{The asymptotic approximations}
We shall now describe the proof of the results in the regime $\gamma=1, t+h_1 \leq 1,\ 1<s+h_1<\min \{\frac{3}{2}-t, 2-h_1\} $ and when the frequency is near the resonance with $l_M >0 $. As discussed already in section \ref{scaling}, $\mathbf{C} $ is positive in this regime. The proofs of the results in the other cases follow similarly and therefore we skip them to avoid repetition.\\ 
To begin with, we write the algebraic system \eqref{LAS-1-theorem} as
\begin{equation}
Y_m+\sum_{{j=1} \atop {j\neq m}}^{M} \Phi_{\kappa_0}(z_m,z_j) \overline{\mathbf{C}} Y_j a^{1-h_1}= u^{I}(z_m),
\label{est-vol-0}
\end{equation}
where $Y_j=-\mathbf{C}^{-1} Q_j,\ \mathbf{C}=\overline{\mathbf{C}} a^{1-h_1},\ j=1,\dots,M $.
\\
The main step in the proof lies in comparing \eqref{est-vol-0} to the volume integral equation
\begin{equation}
Y(z)+a^{1-h_1-s} \int_{\Omega} \Phi_{\kappa_0}(z,y) K^{M}(y) \overline{\mathbf{C}} Y(y) dy=u^{I}(z).
\notag
\end{equation}
For $m=1,\dots,M $, we rewrite the above integral equation in the form
\begin{equation}
\begin{aligned}
&Y(z_m)+a^{1-s-h_1}\sum_{{j=1} \atop {j\neq m}}^{M} \Phi_{\kappa_0}(z_m,z_j) \overline{\mathbf{C}} Y(z_j) a^{s}\\
&= u^{I}(z_m)+a^{1-s-h_1} \underbrace{ \left[ \sum_{{j=1} \atop {j \neq m}}^{M}  \Phi_{\kappa_0}(z_m,z_j) \overline{\mathbf{C}} Y(z_j) a^{s}
 - \sum_{{j=1} \atop {j \neq m}}^{[a^{-s}]}  \Phi_{\kappa_0}(z_m,z_j) K^M(z_j) \overline{\mathbf{C}} Y(z_j) Vol(\Omega_j) \right]}_{A_1}\\
&\quad +a^{1-s-h_1} \underbrace{ \left[ \sum_{{j=1} \atop {j \neq m}}^{[a^{-s}]}  \Phi_{\kappa_0}(z_m,z_j) K^M(z_j) \overline{\mathbf{C}} Y(z_j) Vol(\Omega_j) - \int_{{\cup_{{j=1} \atop {j \neq m}}^{[a^{-s}]}}\Omega_j}  \Phi_{\kappa_0}(z_m,y) K^M(y) \overline{\mathbf{C}} Y(y) dy \right]}_{B_1} \\
&\quad -a^{1-s-h_1} \underbrace{\int_{\Omega_m}  \Phi_{\kappa_0}(z_m,y) K^M(y) \overline{\mathbf{C}} Y(y) dy}_{C_1} -a^{1-s-h_1} \underbrace{\int_{\Omega \setminus \cup_{j=1 }^{[a^{-s}]} \Omega_j}  \Phi_{\kappa_0}(z_m,y) K^M(y) \overline{\mathbf{C}} Y(y) dy}_{D_1} .
\end{aligned}
\label{est-vol-1}
\end{equation}
We next estimate the quantities $A_1,B_1,C_1 $ and $D_1 $. 
To estimate the term $C_1$, we proceed as follows. Using \eqref{vol-est-2}, we can write
\begin{equation}
\begin{aligned}
\vert C_1 \vert=\left\vert \int_{\Omega_m} \Phi_{\kappa_0}(z_m,y) K^{M}(y) \overline{\mathbf{C}} Y(y) dy \right\vert 
&\leq \norm{K^M}_{L^{\infty}(\Omega)} \vert \overline{\mathbf{C}} \vert \norm{Y}_{L^{\infty}(\Omega)} \left\vert \int_{\Omega_m} \Phi_{\kappa_0} (z_m,y) dy \right\vert \\
&\leq C \ a^{\frac{1-s-h_{1}}{2}} \left\vert \int_{\Omega_m} \Phi_{\kappa_0} (z_m,y) dy \right\vert \\
 &\leq \frac{C}{4 \pi} a^{\frac{1-s-h_{1}}{2}} \Big(\int_{B(z_m,r)} \frac{1}{\vert z_m-y \vert} dy + \int_{\Omega_m \setminus B(z_m,r)} \frac{1}{\vert z_m-y \vert} dy \Big) \\
&\leq \frac{C}{4 \pi} a^{\frac{1-s-h_{1}}{2}} \underbrace{\left(2 \pi r^2+\frac{1}{r} \left[a^s-\frac{4}{3} \pi r^3 \right] \right)}_{:=l(r,a)},
\end{aligned}
\notag
\end{equation}
where $r<\frac{1}{2} a^{\frac{s}{3}}$.
Now the term $l(r,a) $ in the right hand side attains its maximum for $r=\left(\frac{3}{4} \pi a^{s} \right)^{\frac{1}{3}} $ and its maximum value is $\frac{3}{2} \left(\frac{4}{3 \pi} \right)^{\frac{1}{3}} a^{\frac{2s}{3}} $. Therefore
\begin{equation}
\begin{aligned}
\left\vert C_1 \right\vert &= \mathcal{O}\left(a^{\frac{1-s-h_{1}}{2}} a^{\frac{2s}{3}}\right).
\end{aligned}
\label{est-vol-3}
\end{equation}
Let us next estimate the term $B_1$. In order to be able to estimate the sum of the integrals efficiently, we count the bubbles in the following manner. Relative to each $\Omega_m $, we distinguish the other bubbles (located outside) as near and far ones. One convenient way to do this is to take the $\Omega_j $, which are assumed to be cubes, as arranged in a cuboid fashion, like in the Rubik's cube. It is easy to see that in such an arrangement, the total number of cubes upto the $n^{th} $ layer is $(2n+1)^{3},\ n=0,\dots,[a^{-\frac{s}{3}}] $ and $\Omega_m $ is located at the centre. So the number of bubbles located in the $n^{th} $ layer ($n \neq 0 $) will be atmost $[(2n+1)^{3}-(2n-1)^{3}] $ and their distance from $D_m $ is more than $n\left(a^{\frac{s}{3}}-\frac{a}{2} \right) $. For more explanations, we refer to Section 3.3. of \cite{ACCS}.\\
Next, let us define \[f(z_m,y):=\Phi_{\kappa_0}(z_m,y)Y(y). \] Using Taylor's expansion, we then have
\[f(z_m,y)-f(z_m,z_l)=(y-z_l)R_{l}(z_m,y), \]
where
\begin{equation}
\begin{aligned}
R_{l}(z_m,y)&=\int_{0}^{1} \nabla_y f(z_m,y-\beta(y-z_l)) d\beta \\
&=\int_{0}^{1} \left[\nabla_y \Phi_{\kappa_0}(z_m,y-\beta(y-z_l)) \right] Y(y-\beta(y-z_l)) d\beta\\
&\qquad \qquad+\int_{0}^{1} \Phi_{\kappa_0}(z_m,y-\beta(y-z_l))\left[\nabla_{y} Y(y-\beta(y-z_l)) \right] d\beta.
\end{aligned}
\notag
\end{equation}
Now for $x\neq y $, $\nabla_{y} \Phi_{\kappa_0}(x,y)= \Phi_{\kappa_0}(x,y)\left[\frac{1}{\vert x-y \vert}-i\kappa_0 \right] \frac{x-y}{\vert x-y \vert} $  and hence for $l\neq m $, we can write
\[\vert \Phi_{\kappa_0}(z_m,y-\beta(y-z_l)) \vert \leq \frac{1}{4 \pi n \left(a^{\frac{s}{3}}-\frac{a}{2} \right)} , \ \vert \nabla_y \Phi_{\kappa_0}(z_m,y-\beta(y-z_l)) \vert \leq \frac{1}{4 \pi n \left(a^{\frac{s}{3}}-\frac{a}{2} \right)} \left[\frac{1}{ n \left(a^{\frac{s}{3}}-\frac{a}{2} \right)} +\kappa_{0} \right] , \]
and therefore
\begin{equation}
\begin{aligned}
\vert R_{l}(z_m,y)\vert&\leq \frac{1}{n(a^{\frac{s}{3}}-\frac{a}{2})} \left(\left[\frac{1}{n(a^{\frac{s}{3}}-\frac{a}{2})} +\kappa_0 \right] \int_{0}^{1} \vert Y(y-\beta(y-z_l)) \vert d\beta+\int_{0}^{1} \vert \nabla_{y}Y(y-\beta(y-z_l)) \vert d\beta\right)\\
&\leq \frac{C}{n(a^{\frac{s}{3}}-\frac{a}{2})} \left(\left[\frac{1}{n(a^{\frac{s}{3}}-\frac{a}{2})} +\kappa_0 \right]\norm{Y}_{L^{\infty}(\Omega)}+ \norm{\nabla Y}_{L^{\infty}(\Omega)} \right).
\end{aligned}
\label{est-vol-3a}
\end{equation}
Now note that, we can write
\begin{equation}
\begin{aligned}
B_{1} &=\sum_{{j=1} \atop {j \neq m}}^{[a^{-s}]}  \Phi_{\kappa_0}(z_m,z_j) K^M(z_j) \overline{\mathbf{C}} Y(z_j) Vol(\Omega_j) -\int_{{\cup_{{j=1} \atop {j \neq m}}^{[a^{-s}]}}\Omega_j}  \Phi_{\kappa_0}(z_m,y) K^M(y) \overline{\mathbf{C}} Y(y) dy \\
&=\underbrace{\sum_{{j=1} \atop {j \neq m}}^{[a^{-s}]} \int_{\Omega_j} \Phi_{\kappa_0}(z_m,z_j) K^M(z_j) \overline{\mathbf{C}} Y(z_j) dy - \sum_{{j=1} \atop {j \neq m}}^{[a^{-s}]} \int_{\Omega_j} \Phi_{\kappa_0}(z_m,y) K^M(z_j) \overline{\mathbf{C}} Y(y) dy}_{B_{1,1}} \\
&\ +\underbrace{\sum_{{j=1} \atop {j \neq m}}^{[a^{-s}]} \int_{\Omega_j} \Phi_{\kappa_0}(z_m,y) K^M(z_j) \overline{\mathbf{C}} Y(y) dy-\sum_{{j=1} \atop {j \neq m}}^{[a^{-s}]} \int_{\Omega_j} \Phi_{\kappa_0}(z_m,y) K^M(y) \overline{\mathbf{C}} Y(y) dy}_{B_{1,2}}.
\end{aligned}
\label{est-vol-4}
\end{equation}
Therefore using \eqref{est-vol-3a} and \eqref{vol-est-2}, we obtain
\begin{equation}
\begin{aligned}
&\vert B_{1,1} \vert \leq \norm{K^M}_{L^{\infty}(\Omega)} \vert \overline{\mathbf{C}} \vert \sum_{{j=1} \atop {j \neq m}}^{[a^{-s}]} \left\vert  \int_{\Omega_j} \left[\Phi_{\kappa_0}(z_m,z_j)  Y(z_j)-\Phi_{\kappa_0}(z_m,y)  Y(y) \right] \right\vert \\
&\leq \norm{K^M}_{L^{\infty}(\Omega)} \vert \overline{\mathbf{C}} \vert \sum_{n=1}^{[a^{-\frac{s}{3}}]}  \left[(2n+1)^3-(2n-1)^3 \right] a^{\frac{s}{3}} a^{s} \frac{c}{n \left(a^{\frac{s}{3}}-\frac{a}{2} \right)} \left(\left[\frac{1}{n \left(a^{\frac{s}{3}}-\frac{a}{2} \right)} + \kappa_{0} \right] \norm{Y}_{L^{\infty}(\Omega)}+ \norm{\nabla Y}_{L^{\infty}(\Omega)} \right) \\
&=\mathcal{O}\left(\sum_{n=1}^{[a^{-\frac{s}{3}}]} \left[24n^2+2 \right] a^{\frac{4s}{3}} \left[\frac{1}{n^2} a^{-\frac{2s}{3}} \norm{Y}_{L^{\infty}(\Omega)}+\frac{1}{n} a^{-\frac{s}{3}} \norm{\nabla Y}_{L^{\infty}(\Omega)} \right] \right) \\
&=\mathcal{O}\left(\sum_{n=1}^{[a^{-\frac{s}{3}}]} \left[24n^2+2 \right] a^{\frac{4s}{3}} \left[\frac{1}{n^2} a^{-\frac{2s}{3}} a^{\frac{1-s-h_{1}}{2}} +\frac{1}{n} a^{-\frac{s}{3}} a^{\frac{1-s-h_{1}}{2}(1+\alpha)} \right] \right)\\
&= \mathcal{O}\left(a^{\frac{2s}{3}}a^{\frac{1-s-h_{1}}{2}}\sum_{n=1}^{[a^{-\frac{s}{3}}]} \left[24+\frac{2}{n^2} \right]+a^{s}a^{\frac{1-s-h_{1}}{2}(1+\alpha)} \sum_{n=1}^{[a^{-\frac{s}{3}}]} \left[24n+\frac{2}{n} \right] \right)\\
&=\mathcal{O} \left(a^{\frac{2s}{3}}a^{\frac{1-s-h_{1}}{2}} a^{-\frac{s}{3}} +a^{s}a^{\frac{1-s-h_{1}}{2}(1+\alpha)}  a^{-\frac{2s}{3}} \right)= \mathcal{O} \left(a^{\frac{s}{3}}a^{\frac{1-s-h_{1}}{2}}  +a^{\frac{1-s-h_{1}}{2}(1+\alpha)}  a^{\frac{s}{3}} \right).
\end{aligned}
\label{est-vol-5}
\end{equation}
Similarly using the fact $K \in C^{0,\lambda}(\overline{\Omega})$, we deduce 
\begin{equation}
\begin{aligned}
\vert B_{1,2} \vert &\leq  \norm{Y}_{L^{\infty}(\Omega)} \vert \overline{\mathbf{C}} \vert \sum_{{j=1} \atop {j \neq m}}^{[a^{-s}]} \int_{\Omega_j}  \left\vert \Phi_{\kappa_0}(z_m,y) \right\vert \left\vert K^M(z_j)-  K^M(y) \right\vert dy \\
&\leq \norm{Y}_{L^{\infty}(\Omega)} \vert \overline{\mathbf{C}} \vert \sum_{n=1}^{[a^{-\frac{s}{3}}]}  \left[(2n+1)^3-(2n-1)^3 \right] \frac{c}{n \left(a^{\frac{s}{3}}-\frac{a}{2} \right)} \int_{\Omega_j} \vert z_j-y \vert^{\lambda} [K]_{C^{0,\lambda}(\overline{\Omega})} dy \\
&\leq \norm{Y}_{L^{\infty}(\Omega)} \vert \overline{\mathbf{C}} \vert [K]_{C^{0,\lambda}(\overline{\Omega})} \sum_{n=1}^{[a^{-\frac{s}{3}}]}  \left[(2n+1)^3-(2n-1)^3 \right] \frac{c}{n \left(a^{\frac{s}{3}}-\frac{a}{2} \right)} a^{s} a^{\frac{s}{3}\lambda} \\
&=\mathcal{O}\left(a^{\frac{1-s-h_{1}}{2}}\sum_{n=1}^{[a^{-\frac{s}{3}}]} [24n^2+2] a^{s+\frac{s\lambda}{3}} \frac{1}{n} a^{-\frac{s}{3}} \right)=\mathcal{O}\left(a^{\frac{1-s-h_{1}}{2}} a^{\frac{2s+s\lambda}{3}} \sum_{n=1}^{[a^{-\frac{s}{3}}]} [24n+\frac{2}{n}] \right)\\
&=\mathcal{O}\left(a^{\frac{1-s-h_{1}}{2}} a^{\frac{s\lambda}{3}} \right).
\end{aligned}
\label{est-vol-5a}
\end{equation}
From \eqref{est-vol-5} and \eqref{est-vol-5a}, we obtain
\begin{equation}
\begin{aligned}
 B_{1}  &= \mathcal{O}\left(a^{\frac{1-s-h_{1}}{2}} a^{\frac{s\lambda}{3}} +a^{\frac{1-s-h_{1}}{2}(1+\alpha)}  a^{\frac{s}{3}}\right).
\end{aligned}
\label{est-vol-6a}
\end{equation}
To estimate the term $D_1$, we distinguish between the following two cases (see also \cite{CMS}).
\begin{itemize}
\item[(a)] The point $z_m$ is away from the boundary $\partial \Omega  $ and so $\Phi_{\kappa_0}(z_m,\cdot) $ is bounded in $y$ near the boundary.

\item[(b)] The point $z_m$ is located near one of the $\Omega_j $'s touching the boundary $\partial \Omega $. In this case, we split the estimate into two parts. By $ N_m$ we denote the part that involves $\Omega_j $'s close to $z_m $, and we denote the remaining part by $F_m $. The integral over $F_m $ can be estimated in a manner similar to the case $(a)$ discussed above. Also note that $F_m \subset \Omega\setminus \cup_{j=1}^{[a^{-s}]} \Omega_j $ and so $Vol(F_m) $ is of the order $a^{\frac{s}{3}} $ as $a \rightarrow 0 $. \\
To estimate the integral over $N_m $, we observe that owing to the fact $a$ is small, the $\Omega_j $'s  close to $z_m $ are located near a small region of the boundary $\partial \Omega $. Since we assume that the boundary is smooth enough, this region can be assumed to be flat. We now divide this layer into concentric layers as in the estimate of $B_1$. In this case, we have at most $(2n+1)^{2} $ cubes intersecting the surface, for $n=0,\dots,[a^{-\frac{s}{3}}] $. So the number of bubbles in the $n^{th} $ layer $(n \neq 0)$ will be at most $[(2n+1)^{2}-(2n-1)^{2}] $ and their distance from $D_m $ is atleast $n\left(a^{\frac{s}{3}}-\frac{a}{2} \right) $. 
\end{itemize}
Therefore we can write
\begin{equation}
\begin{aligned}
\vert D_1 \vert&=\left\vert \int_{\Omega \setminus \cup_{j=1 }^{[a^{-s}]} \Omega_j}  \Phi_{\kappa_0}(z_m,y) K^M(y) \overline{\mathbf{C}} Y(y) dy \right\vert\\
 &=\left\vert \int_{N_m}  \Phi_{\kappa_0}(z_m,y) K^M(y) \overline{\mathbf{C}} Y(y) dy \right\vert+\left\vert \int_{F_m}  \Phi_{\kappa_0}(z_m,y) K^M(y) \overline{\mathbf{C}} Y(y) dy \right\vert \\
&\leq \sum_{l=1}^{[a^{-\frac{2s}{3}}]} \norm{K^M}_{L^{\infty}(\Omega)} \norm{Y}_{L^{\infty}(\Omega)} \vert \overline{\mathbf{C}} \vert Vol(\Omega_l) \frac{1}{d_{ml}} + \norm{\Phi_{\kappa_0}(z_m,\cdot)}_{L^{\infty}(F_m)}   \norm{K^M}_{L^{\infty}(\Omega)} \norm{Y}_{L^{\infty}(\Omega)} \vert \overline{\mathbf{C}} \vert Vol(F_m) \\
&\leq \mathcal{O}(a^{\frac{1-h_{1}-s}{2}}) \norm{K^M}_{L^{\infty}(\Omega)} \norm{Y}_{L^{\infty}(\Omega)} \vert \overline{\mathbf{C}} \vert a^{s} \sum_{l=1}^{[a^{-\frac{2s}{3}}]} \frac{1}{d_{ml}} +C a^{\frac{1-h_{1}-s}{2}} a^{\frac{s}{3}} \\
&\leq \mathcal{O}(a^{\frac{1-h_{1}-s}{2}}) \norm{K^M}_{L^{\infty}(\Omega)} \norm{Y}_{L^{\infty}(\Omega)} \vert \overline{\mathbf{C}} \vert a^{s} \sum_{l=1}^{[a^{-\frac{s}{3}}]} \Big[(2n+1)^2-(2n-1)^2 \Big] \left(\frac{1}{n \Big( a^{\frac{s}{3}}-\frac{a}{2} \Big)} \right) +C a^{\frac{1-h_{1}-s}{2}} a^{\frac{s}{3}}\\
&= \mathcal{O}(a^{\frac{1-h_{1}-s}{2}}) \norm{K^M}_{L^{\infty}(\Omega)} \norm{Y}_{L^{\infty}(\Omega)} \vert \overline{\mathbf{C}} \vert a^{s} \mathcal{O}(a^{-\frac{2s}{3}})+\mathcal{O}(a^{\frac{1-h_{1}-s}{2}}) \mathcal{O}(a^{\frac{s}{3}}),
\end{aligned}
\label{est-vol-2}
\end{equation}
and hence
\begin{equation}
\vert D_1 \vert=\mathcal{O}(a^{\frac{1-h_{1}-s}{2}} a^{\frac{s}{3}}).
\label{est-vol-2a}
\end{equation}
Finally, we deal with the term $A_1 $ as follows. Note that using the fact that $Vol(\Omega_j)=a^{s} \frac{[K^{M}(z_j)]}{K^{M}(z_j)} $, we can write
\begin{equation}
\begin{aligned}
A_{1}&=\sum_{{j=1} \atop {j \neq m}}^{M}  \Phi_{\kappa_0}(z_m,z_j) \overline{\mathbf{C}} Y(z_j) a^{s} -\sum_{{j=1} \atop {j \neq m}}^{[a^{-s}]}  \Phi_{\kappa_0}(z_m,z_j) K^M(z_j) \overline{\mathbf{C}} Y(z_j) Vol(\Omega_j)\\
&=\sum_{{l=1} \atop {{l \neq m} \atop {z_l \in \Omega_m}}}^{[K^{M}(z_m)]} \Phi_{\kappa_0}(z_m,z_l) \overline{\mathbf{C}} Y(z_l) a^{s}  +\sum_{{j=1} \atop {j \neq m}}^{[a^{-s}]} \sum_{{l=1} \atop {z_l \in \Omega_j}}^{[K^M(z_j)]} \Phi_{\kappa_0}(z_m,z_l) \overline{\mathbf{C}} Y(z_l) a^{s}\\
&\quad - \sum_{{j=1} \atop {j \neq m}}^{[a^{-s}]}  \Phi_{\kappa_0}(z_m,z_j) K^M(z_j) \overline{\mathbf{C}} Y(z_j) Vol(\Omega_j)\\
&=\overline{\mathbf{C}} a^{s} \underbrace{\sum_{{l=1} \atop {{l \neq m} \atop {z_l \in \Omega_m}}}^{[K^{M}(z_m)]} \Phi_{\kappa_0}(z_m,z_l) Y(z_l)}_{E_{1}}+\sum_{{j=1} \atop {j \neq m}}^{[a^{-s}]} \overline{\mathbf{C}} a^{s}  \underbrace{\left[\left(\sum_{{l=1} \atop {z_l \in \Omega_j}}^{[K^M(z_j)]}\Phi_{\kappa_0}(z_m,z_l) Y(z_l)  \right)-\Phi_{\kappa_0}(z_m,z_j) \left[K^M(z_j)\right] Y(z_j) \right]}_{E_{2}^{j}}.
\end{aligned}
\label{est-vol-6}
\end{equation}
Now it is easy to see that
\begin{equation}
\vert \overline{\mathbf{C}} a^{s} E_{1} \vert \leq \frac{C (K_{max}-1) \overline{\mathbf{C}}}{4 \pi} \frac{a^s}{d}.
\label{est-vol-7}
\end{equation}
To estimate the term $E^{j}_{2} $, we note that
\begin{equation}
\begin{aligned}
E^{j}_{2}&= \left[\left(\sum_{{l=1} \atop {z_l \in \Omega_j}}^{[K^M(z_j)]}\Phi_{\kappa_0}(z_m,z_l) Y(z_l)  \right)-\Phi_{\kappa_0}(z_m,z_j) \left[K^M(z_j)\right] Y(z_j) \right]\\
&=\sum_{{l=1} \atop {z_{l} \in \Omega_j}}^{[K^{M}(z_j)]} \left( \Phi_{\kappa_0}(z_m,z_l) Y(z_l)-\Phi_{\kappa_0}(z_m,z_j) Y(z_j) \right).
\end{aligned}
\label{est-vol-8}
\end{equation}
Therefore arguing as in the case of $B_1$ and using \eqref{vol-est-2}, we obtain
\begin{equation}
\begin{aligned}
&\left\vert \sum_{{j=1} \atop {j \neq m}}^{[a^{-s}]} \overline{\mathbf{C}} a^{s} E^{j}_{2} \right\vert \leq \vert \overline{\mathbf{C}} \vert a^{s} \left\vert \sum_{{j=1} \atop {j \neq m}}^{[a^{-s}]}  E^{j}_{2} \right\vert \\
&\leq \vert \overline{\mathbf{C}} \vert\ a^{s} \norm{[K^{M}]}_{L^{\infty}(\Omega)} \sum_{n=1}^{[a^{-\frac{s}{3}}]}  \left[(2n+1)^3-(2n-1)^3 \right] a^{\frac{s}{3}} \frac{C}{n \left(a^{\frac{s}{3}}-\frac{a}{2} \right)} \left(\left[\frac{1}{n \left(a^{\frac{s}{3}}-\frac{a}{2} \right)} + \kappa_{0} \right] \norm{Y}_{L^{\infty}(\Omega)}+ \norm{\nabla Y}_{L^{\infty}(\Omega)} \right)\\
&=\mathcal{O} \left(a^{\frac{2s}{3}}a^{\frac{1-s-h_{1}}{2}} a^{-\frac{s}{3}} +a^{s}a^{\frac{1-s-h_{1}}{2}(1+\alpha)}  a^{-\frac{2s}{3}} \right).
\end{aligned}
\label{est-vol-9}
\end{equation}
Hence
\begin{equation}
\vert A_1 \vert=\mathcal{O}\left(a^{\frac{s}{3}}a^{\frac{1-s-h_{1}}{2}} +a^{\frac{1-s-h_{1}}{2}(1+\alpha)}  a^{\frac{s}{3}}+a^{s-t} \right).
\label{est-vol-9a}
\end{equation}
Using the estimates \eqref{est-vol-3},\eqref{est-vol-2a},\eqref{est-vol-6a} and \eqref{est-vol-9a} in \eqref{est-vol-1}, we deduce
\begin{equation}
\begin{aligned}
&Y(z_m)+\sum_{{j=1} \atop {j \neq m}}^{M} \Phi_{\kappa_0}(z_m,z_j) \overline{\mathbf{C}} Y(z_j) a^{1-h_1}\\
&=u^{I}(z_m)+\mathcal{O}\left(a^{1-s-h_1}a^{\frac{1-s-h_1}{2}}a^{\frac{s\lambda}{3}}+a^{\frac{s}{3}}a^{1-s-h_1}a^{\frac{1-s-h_1}{2}(1+\alpha)}\right)\\
&\quad +\mathcal{O}\left( a^{1-s-h_1}a^{\frac{1-s-h_1}{2}}a^{\frac{2s}{3}} \right)+\mathcal{O}\left(a^{1-s-h_1} a^{s-t} \right) \\
&=u^{I}(z_m)+\mathcal{O}\left(a^{\frac{3}{2}(1-s-h_1)+\frac{s\lambda}{3}}+a^{\frac{3}{2}(1-s-h_1)+\frac{2s}{3}}+a^{\frac{3+\alpha}{2}(1-s-h_1)+\frac{s}{3}}+a^{1-h_1-t} \right).
\end{aligned}
\label{est-vol-10}
\end{equation}
Now comparing \eqref{est-vol-10} with \eqref{est-vol-0}, we can deduce
\begin{equation}
\begin{aligned}
&(Y_m-Y(z_m))+\sum_{{j=1} \atop {j \neq m}}^{M} \Phi_{\kappa_0}(z_m,z_j) \overline{\mathbf{C}} (Y_j-Y(z_j)) a^{1-h_1}\\
&\qquad =\mathcal{O}\left(a^{\frac{3}{2}(1-s-h_1)+\frac{s\lambda}{3}}+a^{\frac{3}{2}(1-s-h_1)+\frac{2s}{3}}+a^{\frac{3+\alpha}{2}(1-s-h_1)+\frac{s}{3}}+a^{1-h_1-t} \right)\\
&\qquad =\mathcal{O}\left(a^{\frac{3}{2}(1-s-h_1)+\frac{s\lambda}{3}}+a^{\frac{3+\alpha}{2}(1-s-h_1)+\frac{s}{3}}+a^{1-h_1-t} \right).
\end{aligned}
\label{est-vol-11}
\end{equation}
Thus we see that $\left(Y_j-Y(z_j)\right)^{M}_{j=1} $ satisfies the linear algebraic system \eqref{est-vol-0} albeit a different right hand term. Using the invertibility of the algebraic system, we can now derive the estimate
\begin{equation}
\sum_{m=1}^{M} \vert Y_m-Y(z_m) \vert=\mathcal{O}\left(M \left[a^{\frac{3}{2}(1-s-h_1)+\frac{s\lambda}{3}}+a^{\frac{3+\alpha}{2}(1-s-h_1)+\frac{s}{3}} +a^{1-h_1-t}\right]\right).
\label{est-vol-12}
\end{equation}
Now we are in a position to compare the far-field values. To do so, we recall (from \eqref{Near-resonance}) that the far-field satisfies the estimate
\begin{equation}
u^{\infty}(\hat{x},\theta)= -\sum_{j=1}^{M} e^{-i\kappa_{0} \hat{x} \cdot z_{j}} \overline{\mathbf{C}} Y_{j}  a^{1-h_{1}}+\mathcal{O}(a^{2-s-2h_{1}}+a^{3-2t-2s-2h_{1}}).
\notag
\end{equation}
Next let us denote
\begin{equation}
u^{\infty}_{a}(\hat{x},\theta)=-a^{1-h_{1}-s} \int_{\Omega} e^{-i\kappa_{0} \hat{x}\cdot y} K^{M}(y)\overline{\mathbf{C}} Y(y) dy.
\notag
\end{equation}
Then
\begin{equation}
\begin{aligned}
&u^{\infty}(\hat{x},\theta)-u_{a}^{\infty}(\hat{x},\theta)\\
&=a^{1-h_1-s}\left[\int_{\Omega} e^{-i\kappa_{0} \hat{x} \cdot y} K^{M}(y) \overline{\mathbf{C}} Y(y) dy-\sum_{j=1}^{M} e^{-i \kappa_{0} \hat{x} \cdot z_{j}} \overline{\mathbf{C}} Y_{j} a^{s} \right] +\mathcal{O}(a^{2-s-h_1}+a^{3-2t-2s-2h_1})
\end{aligned}
\notag
\end{equation}
which we can further write as
\begin{equation}
\begin{aligned}
&u^{\infty}(\hat{x},\theta)-u_{a}^{\infty}(\hat{x},\theta)\\
&=a^{1-h_1-s}\int_{\Omega \setminus \cup_{j=1}^{[a^{-s}]} \Omega_j} e^{-i\kappa_{0} \hat{x} \cdot y} K^{M}(y) \overline{\mathbf{C}} Y(y) dy +a^{1-h_1-s}\sum_{j=1}^{[a^{-s}]} \int_{\Omega_j} e^{-i\kappa_{0} \hat{x} \cdot y} K^{M}(y) \overline{\mathbf{C}} Y(y) dy\\
&\quad -a^{1-h_1-s}\sum_{j=1}^{M} e^{-i \kappa_{0} \hat{x} \cdot z_{j}} \overline{\mathbf{C}} Y_{j} a^{s} +\mathcal{O}(a^{2-s-h_1}+a^{3-2t-2s-2h_1}).
\end{aligned}
\notag
\end{equation}
Therefore
\begin{equation}
\begin{aligned}
&u^{\infty}(\hat{x},\theta)-u_{a}^{\infty}(\hat{x},\theta)\\
&=a^{1-h_1-s}\sum_{j=1}^{[a^{-s}]} \int_{\Omega_j} e^{-i\kappa_{0} \hat{x} \cdot y} K^{M}(y) \overline{\mathbf{C}} Y(y) dy-a^{1-h_1-s}\sum_{j=1}^{[a^{-s}]} \sum_{{l=1} \atop {z_{l} \in \Omega_j}}^{[K^{M}(z_j)]} e^{-i \kappa_{0} \hat{x} \cdot z_{l}} \overline{\mathbf{C}} Y_{l} a^{s}\\
&\quad +\mathcal{O}(a^{2-s-2h_1}+a^{3-2t-2s-2h_1})+\underbrace{\mathcal{O}\left(a^{1-h_{1}-s} \norm{K^{M}}_{L^{\infty}(\Omega)} \vert \overline{\mathbf{C}} \vert \norm{Y}_{L^{\infty}(\Omega)} Vol( \Omega\setminus \cup_{j=1}^{M} \Omega_j ) \right)}_{a^{1-h_{1}-s} a^{\frac{1-h_{1}-s}{2}} a^{\frac{s}{3}}}\\
\end{aligned}
\notag
\end{equation}
which we can rewrite as
\begin{equation}
\begin{aligned}
&u^{\infty}(\hat{x},\theta)-u_{a}^{\infty}(\hat{x},\theta)\\
&= a^{1-h_1-s}\sum_{j=1}^{[a^{-s}]} K^{M}(z_j) \overline{\mathbf{C}} \int_{\Omega_j} \left[e^{-i\kappa_{0} \hat{x} \cdot y}  Y(y)-e^{-i\kappa_{0} \hat{x} \cdot z_j}  Y(z_j)  \right] dy\\
&\quad +a^{1-h_1-s} \left[\sum_{j=1}^{[a^{-s}]} \int_{\Omega_j} e^{-i\kappa_{0} \hat{x} \cdot y} K^{M}(y) \overline{\mathbf{C}} Y(y) dy-\sum_{j=1}^{[a^{-s}]} \int_{\Omega_j} e^{-i\kappa_{0} \hat{x} \cdot y} K^{M}(z_j) \overline{\mathbf{C}} Y(y) dy \right]\\
&\quad +\sum_{j=1}^{[a^{-s}]}\overline{\mathbf{C}} a^{1-h_1} \left[\sum_{{l=1} \atop {z_{l} \in \Omega_j}}^{[K^{M}(z_j)]} \left(e^{-i \kappa_{0} \hat{x} \cdot z_{j}} Y(z_j) -e^{-i \kappa_{0} \hat{x} \cdot z_{l}} Y(z_l)\right) +\sum_{{l=1} \atop {z_{l} \in \Omega_j}}^{[K^{M}(z_j)]} e^{-i \kappa_{0} \hat{x} \cdot z_{l}} (Y(z_l)- Y_{l})\right] \\
&\quad +\mathcal{O}\left(a^{\frac{3}{2}(1-h_{1}-s)+\frac{s}{3}}+a^{2-s-2h_{1}}+a^{3-2t-2s-2h_{1}}\right).
\end{aligned}
\notag
\end{equation}
This further implies
\begin{equation}
\begin{aligned}
&u^{\infty}(\hat{x},\theta)-u_{a}^{\infty}(\hat{x},\theta) \\
&= a^{1-h_1-s}\sum_{j=1}^{[a^{-s}]} K^{M}(z_j) \overline{\mathbf{C}} \int_{\Omega_j} \left[e^{-i\kappa_{0} \hat{x} \cdot y}  Y(y)-e^{-i\kappa_{0} \hat{x} \cdot z_j}  Y(z_j)  \right] dy \\
&\quad +a^{1-h_1-s} \left[\sum_{j=1}^{[a^{-s}]} \int_{\Omega_j} e^{-i\kappa_{0} \hat{x} \cdot y} K^{M}(y) \overline{\mathbf{C}} Y(y) dy-\sum_{j=1}^{[a^{-s}]} \int_{\Omega_j} e^{-i\kappa_{0} \hat{x} \cdot y} K^{M}(z_j) \overline{\mathbf{C}} Y(y) dy \right]\\
&\quad +\sum_{j=1}^{[a^{-s}]}\overline{\mathbf{C}} a^{1-h_1} \sum_{{l=1} \atop {z_{l} \in \Omega_j}}^{[K^{M}(z_j)]} \left(e^{-i \kappa_{0} \hat{x} \cdot z_{j}} Y(z_j) -e^{-i \kappa_{0} \hat{x} \cdot z_{l}} Y(z_l)\right) \\
&\quad +\mathcal{O}\left(a^{\frac{3}{2}(1-h_{1}-s)+\frac{s}{3}}+a^{2-s-2h_{1}}+a^{3-2t-2s-2h_{1}}\right)\\
&\quad +\mathcal{O}\left(M a^{1-h_1} \left[a^{\frac{3}{2}(1-s-h_1)+\frac{s\lambda}{3}}+a^{\frac{3+\alpha}{2}(1-s-h_1)+\frac{s}{3}} +a^{1-h_1-t}\right] \right).
\end{aligned}
\label{est-vol-13}
\end{equation}
Now
\begin{equation}
\begin{aligned}
&\sum_{j=1}^{[a^{-s}]}\overline{\mathbf{C}} a^{1-h_1} \sum_{{l=1} \atop {z_{l} \in \Omega_j}}^{[K^{M}(z_j)]} \left(e^{-i \kappa_{0} \hat{x} \cdot z_{j}} Y(z_j) -e^{-i \kappa_{0} \hat{x} \cdot z_{l}} Y(z_l)\right)\\
&\qquad \qquad =\mathcal{O}(a^{1-h_1-s} a^{\frac{s }{3}} \norm{\nabla Y}_{L^{\infty}(\Omega)})+\mathcal{O}(a^{1-h_1-s} a^{\frac{s}{3}} \norm{Y}_{L^{\infty}(\Omega)})\\
&\qquad \qquad =\mathcal{O}(a^{1-h_1-s} a^{\frac{s }{3}} a^{\frac{1-h_1-s}{2}(1+\alpha)})+\mathcal{O}(a^{1-h_1-s} a^{\frac{s }{3}} a^{\frac{1-h_1-s}{2}}),
\end{aligned}
\label{est-vol-14}
\end{equation}
\begin{equation}
\begin{aligned}
&a^{1-h_1-s} \left[\sum_{j=1}^{[a^{-s}]} \int_{\Omega_j} e^{-i\kappa_{0} \hat{x} \cdot y} K^{M}(y) \overline{\mathbf{C}} Y(y) dy-\sum_{j=1}^{[a^{-s}]} \int_{\Omega_j} e^{-i\kappa_{0} \hat{x} \cdot y} K^{M}(z_j) \overline{\mathbf{C}} Y(y) dy \right] \\
&\quad =\mathcal{O}\left(a^{1-h_1-s} a^{\frac{1-h_1-s}{2}} a^{\frac{s\lambda}{3}} \right),
\end{aligned}
\label{est-vol-15}
\end{equation}
and
\begin{equation}
\begin{aligned}
&a^{1-h_1-s}\sum_{j=1}^{[a^{-s}]} K^{M}(z_j) \overline{\mathbf{C}} \int_{\Omega_j} \left[e^{-i\kappa_{0} \hat{x} \cdot y}  Y(y)-e^{-i\kappa_{0} \hat{x} \cdot z_j}  Y(z_j)  \right] dy \\
&=\mathcal{O}\left(a^{1-h_1-s} \left[\sum_{n=1}^{[a^{-\frac{s}{3}}]} [24n^2+2] a^{\frac{4s}{3}} \norm{Y}_{L^{\infty}(\Omega)}+\sum_{n=1}^{[a^{-\frac{s}{3}}]} [24n^2+2] a^{\frac{4s}{3}} \norm{\nabla Y}_{L^{\infty}(\Omega)} \right]\right)\\
&=\mathcal{O}\left(a^{1-h_1-s} a^{\frac{1-h_1-s}{2}} a^{\frac{s}{3}}\right)+\mathcal{O}(a^{1-h_1-s} a^{\frac{s }{3}} a^{\frac{1-h_1-s}{2}(1+\alpha)}) .
\end{aligned}
\label{est-vol-16}
\end{equation}
Using these in \eqref{est-vol-13}, we obtain
\begin{equation}
\begin{aligned}
u^{\infty}(\hat{x},\theta)-u_{a}^{\infty}(\hat{x},\theta)
&=\mathcal{O}\left(a^{2-s-2h_{1}}+a^{3-2t-2s-2h_{1}}+ a^{1-h_1-s} \left[a^{\frac{3}{2}(1-s-h_1)+\frac{s\lambda}{3}}+a^{\frac{3+\alpha}{2}(1-s-h_1)+\frac{s}{3}} +a^{1-h_1-t}\right]\right) \\
&=\mathcal{O}\left(a^{2-s-2h_{1}}+a^{3-2t-2s-2h_{1}}+  a^{\frac{5}{2}(1-s-h_1)+\frac{s\lambda}{3}}+a^{\frac{5+\alpha}{2}(1-s-h_1)+\frac{s}{3}} +a^{1-h_1-s} a^{1-h_1-t} \right)\\
&=\mathcal{O}\left(a^{2-s-2h_{1}}+a^{3-2t-2s-2h_{1}}+  a^{\frac{5}{2}(1-s-h_1)+\frac{s\lambda}{3}}+a^{(\frac{11}{4})_{+}\left(1-s-h_1\right)+\frac{s}{3}} +a^{1-h_1-s} a^{1-h_1-t} \right),
\end{aligned}
\label{est-vol-17}
\end{equation}
since we can choose $\alpha \in (\frac{1}{2},1] $ to be as close to $\frac{1}{2} $ as desired.\\
In the regime under consideration, we already have $2-s-2h_{1}>0 $ and $3-2t-2s-2h_{1}>0 $.
\begin{itemize}
\item Let us look at the term $a^{1-h_1-s} \cdot a^{1-h_1-t}=a^{2-2h_{1}-s-t} $. If the condition $h_{1}+t<\frac{1}{2} $ is satisfied, then
\[2-2h_{1}-s-t> 2-h_{1}-s-\frac{1}{2}=\frac{3}{2}-h_{1}-s>0, \] in the regime under consideration, that is, $1<s+h_{1}<\min\{\frac{3}{2}-t,2-h_1\} $.\\
Thus a sufficient condition can be written as
\begin{equation}
0<1-h_{1}<s \leq 3t< \min \left\{\frac{3}{2}-t-h_{1}, 2-2h_1 \right\},\ h_1 < \frac{1}{6}. 
\label{cond-vol-1}
\end{equation}
\item Next let us look for conditions to guarantee that $\frac{5}{2}(1-h_1-s)+\frac{s\lambda}{3}>0 $. Now $\frac{5}{2}(1-h_1-s)+\frac{s\lambda}{3}=\frac{5}{2}-\frac{5 h_1}{2}-\frac{5s}{2}+\frac{s \lambda}{3} $.\\
Hence if $s+h_{1}<1+\frac{2s\lambda}{15} $, then we can guarantee $\frac{5}{2}-\frac{5 h_1}{2}-\frac{5s}{2}+\frac{s \lambda}{3}>0 $. \\
Therefore a sufficient condition, in this case, can be written as
\begin{equation}
\begin{aligned}
&1<s+h_{1}<1+\frac{2s\lambda}{15}.
\end{aligned}
\label{cond-vol-2}
\end{equation}
\item We next want conditions to guarantee $(\frac{11}{4})_{+}(1-h_1-s)+\frac{s}{3}>0 $. Note that $(\frac{11}{4})_{+}(1-h_1-s)+\frac{s}{3}=(\frac{11}{4})_{+}-(\frac{11}{4})_{+}h_{1}+\left(\frac{1}{3}-(\frac{11}{4})_{+} \right)s $.\\
Now if $s<\frac{(\frac{11}{4})_{+}}{(\frac{11}{4})_{+}-\frac{1}{3}}(1-h_1) $, then we can guarantee $(\frac{11}{4})_{+}-(\frac{11}{4})_{+}h_{1}+\left(\frac{1}{3}-(\frac{11}{4})_{+} \right)s>0 $.\\
Hence a sufficient condition, in this case, can be written as
\begin{equation}
0<1-h_{1}<s<\frac{(\frac{11}{4})_{+}}{(\frac{11}{4})_{+}-\frac{1}{3}}(1-h_1). 
\label{cond-vol-3}
\end{equation}
\end{itemize}
Therefore we have the following set of sufficient conditions.
\begin{equation}
\begin{aligned}
& 0<1-h_{1}<s \leq 3t< \min \left\{\frac{3}{2}-t-h_{1}, 2-2h_1 \right\},\ h_1 < \frac{1}{6},\\
& 1<s+h_{1}<1+\frac{2s\lambda}{15},\\
&0<1-h_{1}<s<\frac{(\frac{11}{4})_{+}}{(\frac{11}{4})_{+}-\frac{1}{3}}(1-h_1).
\end{aligned}
\notag
\end{equation}

Now if $s< (1+\frac{2\lambda}{15})(1-h_1) $, then $s+h_1<1+\frac{2s\lambda}{15} $. Also if $\alpha \in (\frac{1}{2}, \frac{2}{3}) $, using the fact that $\lambda \in (0,1) $, it follows that $(1+\frac{2\lambda}{15})(1-h_1)<\frac{(\frac{11}{4})_{+}}{(\frac{11}{4})_{+}-\frac{1}{3}}(1-h_1) $.  Hence the above set of sufficient conditions can be replaced by
\begin{equation}
\begin{aligned}
& 0<1-h_{1}<s \leq 3t< \min \left\{\frac{3}{2}-t-h_{1}, 2-2h_1 \right\},\ h_1 < \frac{1}{6},\\
& 0<1-h_{1}<s<\left(1+\frac{2\lambda}{15}\right)(1-h_1),
\end{aligned}
\label{cond-vol-4}
\end{equation}
which can be further replaced by the condition\footnote{
Note that if $\frac{2\lambda}{15+2\lambda}<h_1<\frac{1}{6}  $, we have $\frac{3}{2}-t-h_1>\left(1+\frac{2\lambda}{15}\right) (1-h_1), \ 2-2h_1> \left(1+\frac{2\lambda}{15}\right) (1-h_1) $ and we can replace the conditions by the sufficient condition
\begin{equation}
\begin{aligned}
&0<1-h_1<s\leq 3t<\left(1+\frac{2\lambda}{15}\right)(1-h_1).
\end{aligned}
\notag
\end{equation}
}
\begin{equation}
\begin{aligned}
&0<1-h_{1}<s \leq 3t< \min \left\{\frac{3}{2}-t-h_{1}, \left(1+\frac{2\lambda}{15}\right)(1-h_1) \right\},\ h_1 < \frac{1}{6}.
\end{aligned}
\label{cond-vol-4a}
\end{equation}
Finally using \eqref{vol-est-1000} and \eqref{est-vol-17}, we deduce
\begin{equation}
\begin{aligned}
&u^{\infty}(\hat{x},\theta)-u_{D}^{\infty}(\hat{x},\theta)\\
&=\mathcal{O}\left(a^{\frac{s+h_1-1}{4}}+a^{2-s-2h_{1}}+a^{3-2t-2s-2h_{1}}+  a^{\frac{5}{2}(1-s-h_1)+\frac{s\lambda}{3}}+a^{(\frac{11}{4})_{+}\left(1-s-h_1\right)+\frac{s}{3}} +a^{1-h_1-s} a^{1-h_1-t} \right).
\end{aligned}
\label{volume-blow}
\end{equation}
\begin{remark}
In the cases when ($\gamma <1,\ \gamma+s=2 $) or ($\gamma=1, \gamma+s=2 $ with the frequency $\omega $ away from the Minnaert resonance), the estimates can be deduced similarly by using \eqref{vol-est-1} instead of \eqref{vol-est-2}. In these cases, we can further compare the far-fields corresponding to $\overline{\mathbf{C}} $ to that of $\overline{\mathbf{C}}_{lead} $ in the following manner. \\
Let us consider the Lippmann-Schwinger equation
\begin{equation}
\tilde{Y}(z)+ \int_{\Omega} \Phi_{\kappa_0}(z,y) K^{M}(y) \overline{\mathbf{C}}_{lead} \tilde{Y}(y) dy = u^{I}(z),
\label{vol-int-lead}
\end{equation}
and the corresponding far-field given by
\begin{equation}
u^{\infty}_{lead}(\hat{x},\theta)=- \int_{\Omega} e^{-i\kappa_{0} \hat{x}\cdot y} K^{M}(y)\overline{\mathbf{C}}_{lead} \tilde{Y}(y) dy.
\notag
\end{equation}
Let us first deal with the case $\gamma<1, \ \gamma+s=2 $. Using the fact that $ \overline{\mathbf{C}}=\overline{\mathbf{C}}_{lead} +\mathcal{O}(a^{2-\gamma})$ in \eqref{vol-int}, we can write \eqref{vol-int} as
\begin{equation}
Y(z)+ \int_{\Omega} \Phi_{\kappa_0}(z,y) K^{M}(y) \overline{\mathbf{C}}_{lead}  Y(y) dy = u^{I}(z)- \mathcal{O}(a^{1-\gamma}) \int_{\Omega} \Phi_{\kappa_0}(z,y) K^{M}(y)   Y(y) dy.
\label{vol-int-2}
\end{equation}
Comparing this with \eqref{vol-int-lead}, we obtain
\begin{equation}
[Y-\tilde{Y}](z)+ \int_{\Omega} \Phi_{\kappa_0}(z,y) K^{M}(y) \overline{\mathbf{C}}_{lead}  [Y-\tilde{Y}](y) dy = - \mathcal{O}(a^{1-\gamma}) \int_{\Omega} \Phi_{\kappa_0}(z,y) K^{M}(y)   Y(y) dy,
\label{vol-int-3}
\end{equation}
whence, using the invertibility of the integral equation \eqref{vol-int-3} and the fact that $\norm{Y}_{L^{2}(\Omega)}=\mathcal{O}(1) $, we derive 
\[\norm{Y-\tilde{Y}}_{L^2(\Omega)}=\mathcal{O}(a^{1-\gamma}) .\] 
Therefore
\begin{equation}
\begin{aligned}
u_{a}^{\infty}(\hat{x},\theta)-u^{\infty}_{lead}(\hat{x},\theta)
&=- \int_{\Omega} e^{-i\kappa_{0} \hat{x}\cdot y} K^{M}(y)\overline{\mathbf{C}}_{lead} [Y-\tilde{Y}](y) dy - \mathcal{O}(a^{1-\gamma})\int_{\Omega} e^{-i\kappa_{0} \hat{x}\cdot y} K^{M}(y) Y(y) dy\\
&=\mathcal{O}\left(a^{1-\gamma} \right),
\end{aligned}
\notag
\end{equation}
and hence
\begin{equation}
\begin{aligned}
u^{\infty}(\hat{x},\theta)-u^{\infty}_{lead}(\hat{x},\theta)
&=\mathcal{O}\left(a^{1-\gamma}+a^{\frac{s\lambda}{3}}+a^{2-s}+a^{3-\gamma-2t-s}+a^{s-t} \right).
\end{aligned}
\label{gamma-small-vol}
\end{equation}
Proceeding similarly, when  $\gamma=1, \gamma+s=2 $ with the frequency $\omega $ away from the Minnaert resonance, we can derive
\begin{equation}
\begin{aligned}
u_{a}^{\infty}(\hat{x},\theta)-u^{\infty}_{lead}(\hat{x},\theta)
&=\mathcal{O}\left(a^{2} \right),
\end{aligned}
\notag
\end{equation}
and hence
\begin{equation}
\begin{aligned}
u^{\infty}(\hat{x},\theta)-u^{\infty}_{lead}(\hat{x},\theta)
&=\mathcal{O}\left(a^{2}+a^{\frac{s\lambda}{3}}+a^{2-s}+a^{3-\gamma-2t-s}+a^{s-t} \right)\\
&=\mathcal{O}\left(a^{\frac{s\lambda}{3}}+a^{2-s}+a^{3-\gamma-2t-s}+a^{s-t} \right).
\end{aligned}
\label{gamma-away-vol}
\end{equation}
Now suppose that $\gamma=1$ and $\omega $ is near the Minnaert resonance, i.e. $1-\frac{\omega^2_M}{\omega^2}=l_M a^{h_1}$, with $l_M \neq 0$ and $h_1 \in (0, 1)$ where $s$ and $t$ satisfy the conditions \[\ s=1-h_1 \mbox{ and } \frac{s}{3} \leq t<\min\{1-h_1,\frac{1}{2}\}.\]
In this case, if we notice that 
\begin{equation}
\omega^2-\overline{\omega}_{M}^{2}=\omega^2 l_{M} a^{h_1}+ \underbrace{(\omega^2-\omega_{M}^{2})}_{\mathcal{O}(a^2)},
\notag
\end{equation}
and use the fact that $s+h_1=1 $ in \eqref{est-vol-17}, we can derive
\begin{equation}
\begin{aligned}
u^{\infty}(\hat{x},\theta)-u^{\infty}_{a}(\hat{x},\theta)
&=\mathcal{O}\left(a^{h_1}+a^{\frac{(1-h_{1})\lambda}{3}}+a^{1-h_{1}}+a^{1-2t}+a^{1-h_{1}-t} \right).
\end{aligned}
\label{gamma-near-vol}
\end{equation}
\end{remark}
\section{The case of metasurfaces}\label{Section-surface}
In this section, we deal with the case of surface distribution of the gas bubbles. As in the case of volumetric distribution, we first study the surface integral equations corresponding to the equivalent scattering problem and then compare this equivalent problem to the original scattering problem.
\subsection{The surface integral equations and corresponding estimates}
Let us consider the scattering problem
\begin{equation}
\begin{aligned}
&\left(\Delta +\kappa_{0}^{2} \right)u^{t}_{a}=0,\ \text{in} \ \mathbb{R}^{3}\setminus \Sigma,
\end{aligned}
\label{scat-3a}
\end{equation}
\begin{equation}
[u^{t}_{a}]=0, \ \left[\frac{\partial u^{t}_{a}}{\partial \nu} \right]-h_{*} \sigma u^{t}_{a}=0, \ \text{on} \ \Sigma,
\label{scat-3b}
\end{equation}
\begin{equation}
\begin{aligned}
&u^{t}_{a}=u^{s}_{a}+e^{i \kappa_{0} x \cdot \theta},
\end{aligned}
\label{scat-3c}
\end{equation}
\begin{equation}
\begin{aligned}
&\frac{\partial u^{s}_{a}}{\partial \vert x \vert}-i \kappa_{0} u^{s}_{a} =o\left(\frac{1}{\vert x \vert} \right), \ \vert x \vert\rightarrow \infty,
\end{aligned}
\label{scat-3d}
\end{equation}
and the surface integral equation
\begin{equation}
\begin{aligned}
Y(z)+h_{*} \int_{\Sigma} \Phi_{\kappa_0}(z,y)\sigma(y) Y(y) ds(y)=u^{I}(z),\ z \in \Sigma,
\end{aligned}
\label{main-blow-1}
\end{equation}
where $h_{*} $ is a positive real number and $\sigma=K^{M}\overline{\mathbf{C}} $. We note that $u^{t}_{a} $ is a solution of \eqref{scat-3a}-\eqref{scat-3c} if and only if $Y:=u^{t}_{a}\Big\vert_{\Sigma} $ satisfies \eqref{main-blow-1}. In addition, \[u^{t}_{a}(x)=u^{I}(x)- h_{*} \int_{\Sigma} \Phi_{\kappa_0}(x,y)\sigma(y) Y(y) ds(y),\ x \in \mathbb{R}^{3}.\]
Recall that we are interested in the cases $ h_{*}=\mathcal{O}(1)$ and $h_{*}=a^{1-h_1-s},\ s+h_1>1 $ as $a \rightarrow 0 $.\\
For $\vert s \vert \leq \alpha+1 $ and $\Gamma $ of class $C^{\alpha,1} $, we recall the definitions (see also \cite{HW},\cite{MPS})
\begin{equation}
\begin{aligned}
H^{s}(\Sigma)&:=\{f \vert_{\Sigma} : f \in H^{s}(\Gamma) \} , \ H^{s}_{\overline{\Sigma}}(\Gamma):=\{f \in H^{s}(\Gamma): \text{supp}\ f \subseteq \overline{\Sigma} \} , \\
H^{-s}_{\Sigma}(\Gamma)&:=\{\phi \in H^{-s}(\Gamma): \langle \phi, \psi \rangle_{-s,s}=0,\ \text{for any}\  \psi \in H^{s}_{\Gamma \setminus \Sigma}(\Gamma)\} . 
\end{aligned}
\notag
\end{equation}
The following lemma shows the existence and uniqueness of the solution $Y$ to the surface integral equation \eqref{main-blow-1}. 
\begin{lemma}\label{Lipp-sur}
The surface integral equation \eqref{main-blow-1} is invertible from $L^{2}(\Sigma) $ to itself and the solution $Y$ belongs to $W^{1,p}(\Sigma) $ for $p \in (1,\infty) $.
\end{lemma}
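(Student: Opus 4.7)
I will treat (\ref{main-blow-1}) as a Fredholm equation $(I+T)Y = u^I$ on $L^2(\Sigma)$, where $TY := h_* \mathbf{S}_{\kappa_0}[\sigma Y]\big|_{\Sigma}$. The kernel $\Phi_{\kappa_0}(z,y)$ is weakly singular of order $|z-y|^{-1}$ on the two-dimensional manifold $\Sigma$ and $\sigma \in L^\infty(\Sigma)$, so the mapping property $\mathbf{S}_{\kappa_0}: L^2(\Sigma) \to H^1(\Sigma)$ (the $s=1$ endpoint of (\ref{e107})) combined with the compact embedding $H^1(\Sigma) \hookrightarrow L^2(\Sigma)$ shows that $T$ is compact on $L^2(\Sigma)$. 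By the Fredholm alternative, invertibility of $I+T$ then reduces to injectivity.

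For injectivity I suppose $Y \in L^2(\Sigma)$ satisfies $Y + h_* \mathbf{S}_{\kappa_0}[\sigma Y] = 0$, and define the radiating single-layer potential $u(x) := -h_*\int_\Sigma \Phi_{\kappa_0}(x,y)\sigma(y) Y(y)\, ds(y)$, $x \in \mathbb{R}^3 \setminus \Sigma$. Then $u$ solves the Helmholtz equation in $\mathbb{R}^3 \setminus \Sigma$, obeys the Sommerfeld radiation condition, and inherits the standard single-layer jump relations $[u]=0$, $[\partial_\nu u] = h_* \sigma Y = h_* \sigma u|_\Sigma$, where the last identity uses $u|_\Sigma = Y$ coming from the integral equation. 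Multiplying Helmholtz by $\bar u$, integrating over $B_R \setminus \Sigma$ and applying Green's identity yields
\begin{equation*}
\int_{\partial B_R} \bar u\, \partial_\nu u\, dS = \int_{B_R \setminus \Sigma}\bigl(|\nabla u|^2 - \kappa_0^2 |u|^2\bigr)\,dx + h_* \int_\Sigma \sigma\, |u|^2\, ds.
\end{equation*}
Since $\sigma = K^M \overline{\mathbf{C}}$ is real-valued, taking imaginary parts gives $\Im \int_{\partial B_R} \bar u\, \partial_\nu u\, dS = 0$. Rellich's lemma forces $u \equiv 0$ in the unbounded component of $\mathbb{R}^3 \setminus \Sigma$, and unique continuation then propagates $u \equiv 0$ throughout $\mathbb{R}^3 \setminus \Sigma$, so $Y = u|_\Sigma = 0$.

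The $W^{1,p}$ regularity I will obtain by bootstrapping in (\ref{main-blow-1}). The bound $\mathbf{S}_{\kappa_0}: L^2(\Sigma) \to H^1(\Sigma)$ upgrades $Y$ from $L^2$ to $H^1(\Sigma)$, and the Sobolev embedding $H^1(\Sigma) \hookrightarrow L^q(\Sigma)$ for every $q \in [1,\infty)$ on the $2$-dimensional surface then gives $Y \in L^p(\Sigma)$ for each finite $p$. The classical $L^p$ regularity $\mathbf{S}_{\kappa_0}: L^p(\Sigma) \to W^{1,p}(\Sigma)$ for $p \in (1,\infty)$ on smooth surfaces (which reduces to the analogous bound for $\mathbf{S}_0$ in view of the smoothing of $\mathbf{S}_{\kappa_0} - \mathbf{S}_0$ supplied by Lemma \ref{map-prop}), combined with the smoothness of the plane wave $u^I$, finally yields $Y = u^I - h_* \mathbf{S}_{\kappa_0}[\sigma Y] \in W^{1,p}(\Sigma)$.

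The step I expect to be the most delicate is injectivity when $\Sigma$ is an open screen, since the standard Rellich/unique continuation machinery is tailored to closed interfaces. My plan for this case is to embed $\Sigma$ into the closed auxiliary surface $\Gamma = \partial D$ appearing in the hypotheses and extend $\sigma$ by zero on $\Gamma \setminus \Sigma$; the resulting $u$ then has trivial jumps across $\Gamma \setminus \Sigma$, so Rellich outside a large ball followed by unique continuation across the smooth piece $\Gamma\setminus\Sigma$ carries $u = 0$ into the bounded component of $\mathbb{R}^3 \setminus \Gamma$ and hence across $\Sigma$, closing the argument uniformly for both geometries.
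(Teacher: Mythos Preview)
Your proof is correct and follows essentially the same route as the paper: compactness of $T$ via $\mathbf{S}_{\kappa_0}:L^2(\Sigma)\to H^1(\Sigma)$ and Rellich compact embedding, injectivity via the radiating single-layer potential and Rellich's lemma (using that $\sigma$ is real), and the same $L^2\to H^1\to L^p\to W^{1,p}$ bootstrap for regularity. The paper is slightly terser in the injectivity step---once $u\equiv 0$ in the exterior of $D$, the relation $Y=u|_\Sigma$ already forces $Y=0$ without invoking unique continuation into the interior---but your more explicit handling of the open-screen case via extension by zero to $\Gamma$ is exactly how the paper sets things up as well.
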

\begin{proof}
Let us denote $\sigma_{h}:=h_{*}\sigma $ 
and first consider the case when $h_{*}=\mathcal{O}(1) $.\\
We shall prove that the integral equation \eqref{main-blow-1} is invertible from $L^{2}(\Sigma) $ to itself. To see this, let us consider the operator $B$ defined by
\begin{equation}
f \mapsto Bf:= \int_{\Sigma} \sigma_{h}(z)\Phi_{\kappa_0}(x,z) f(z) ds(z) .
\notag
\end{equation}
Using the mapping properties of the single layer operator, it follows that $B$ maps $L^{2}(\Sigma) $ to $ H^{1}(\Sigma) $. Note that here $H^{1}(\Sigma) $ is defined as the restriction of functions in $H^{1}(\Gamma) $ to $\Sigma $, where $\Gamma $ is a smooth, closed surface such that $\Sigma \subset \Gamma $.\\
Now since the embedding $H^{1}(\Sigma) \hookrightarrow L^{2}(\Sigma) $ is compact, it follows that
\begin{equation}
Id+B: L^{2}(\Sigma) \rightarrow L^{2}(\Sigma)
\notag
\end{equation}
is Fredholm of index zero. It is therefore sufficient to prove that $\ker(I+B)=\{0 \} $.\\
In this direction, let us assume that $(I+B)f=0 $ for some $f \in L^{2}(\Sigma) $, and consider the function 
\begin{equation}
w(x):= -\int_{\Sigma} \sigma_{h}(z)\Phi_{\kappa_0}(x,z) f(z) ds(z), \ x \in \mathbb{R}^{3}\setminus \overline{\Sigma}.
\notag
\end{equation}
The function $w$ so defined satisfies 
\begin{equation}
(\Delta+\kappa_{0}^{2})w=0, \ \text{in} \ \mathbb{R}^{3} \setminus \overline{\Sigma},
\notag
\end{equation}
and the Sommerfeld conditions. Furthermore 
\begin{equation}
[w]_{\Sigma}=0, \ \text{and}\ [\partial_{\nu}w]_{\Sigma}=\sigma_{h}(x)f(x)=\sigma_{h}(x)w(x).
\notag
\end{equation}
Now let $D $ be a connected and open subset of $\mathbb{R}^{3} $ such that $\partial D=\Gamma $. Then integrating in $\Omega_{r}:=B_{r} \setminus D $ we have
\begin{equation}
\int_{\partial B_r} w^{+} \frac{\partial \overline{w^{+}}}{\partial \nu}=\int_{\Sigma} w^{+} \frac{\partial \overline{w^{+}}}{\partial \nu}  -\kappa_{0}^{2} \int_{\Omega_{r}} \vert w^{+} \vert^2  +\int_{\Omega_{r}} \vert \nabla w^{+} \vert^2.
\label{e1}
\end{equation}
Similarly, integrating in $D $, we obtain 
\begin{equation}
\int_{\Sigma} w^{-} \frac{\partial \overline{w^{-}}}{\partial \nu}=  -\kappa_{0}^{2} \int_{\Omega_{r}} \vert w^{-} \vert^2  +\int_{\Omega_{r}} \vert \nabla w^{-} \vert^2.
\label{e2}
\end{equation}
Adding \eqref{e1} and \eqref{e2} and using the jump relations satisfied by $w$, we conclude that
\begin{equation}
\int_{\partial B_r} w^{+} \frac{\partial \overline{w^{+}}}{\partial \nu}=+\int_{\Sigma} \overline{\sigma}_{h}(x) \vert w \vert^2 -\kappa_{0}^{2} \int_{\Omega_{r} \cup D} \vert w \vert^2  +\int_{\Omega_{r} \cup D} \vert \nabla w \vert^2 ,
\label{e3}
\end{equation}
whence it follows that if $\Im\ \overline{\sigma}_{h}\geq 0 $, then $\Im \int_{\partial B_r} w^{+} \frac{\partial \overline{w^{+}}}{\partial \nu} \geq 0 $. Now recall that $\sigma_h $ is a real-valued function in our case, and therefore the fact that $\Im \int_{\partial B_r} w^{+} \frac{\partial \overline{w^{+}}}{\partial \nu} \geq 0 $ indeed holds true.\\
We can now apply Rellich lemma and conclude that $w^{+}=0$ in $\mathbb{R}^{3} \setminus \overline{D}$ whence the fact $f=0$ follows.
Thus we have shown that \eqref{main-blow-1} is invertible and the solution $Y$ to \eqref{main-blow-1} belongs to $L^2(\Sigma)$. \\
Next using this fact together with the smoothing property of the single layer operator, it follows that $Y \in H^{1}(\Sigma) $.\\
The Sobolev embedding result $H^{1}(\Sigma) \hookrightarrow L^{p}(\Sigma),\ p \in (1,+\infty) $ further implies that the function $Y \in  L^{p}(\Sigma),\ p \in (1,+\infty)$.\\
Now if we use \eqref{main-blow-1} and the fact that the single layer operator maps $L^{p}(\Sigma) $ into $W^{1,p}(\Sigma),\ p\in (1,\infty) $ (see \cite{MM}), we can conclude that the solution $Y$ to the integral equation \eqref{main-blow-1} lies in the class $W^{1,p}(\Sigma),\ p\in (1,+\infty) $.\\  
The case when $h_{*}=a^{1-h_1-s}, \ s+h_1>1 $ follows by a similar argument since the function $\sigma $ and hence $\sigma_h $ is positive.
\end{proof}
The next result provides us with an estimate of the $L^2$ norm of $Y$ in terms of the parameter $h_{*}=a^{1-s-h_1}$ in the regime $s+h_1>1 $. Let us define the semi-classical parameter $h:=a^{\frac{s+h_1-1}{2}} $. 
\begin{lemma}\label{est-sur}
Assume that $\kappa_{0}^{2} $ is not an eigenvalue for the Dirichlet Laplacian in $D$. Then the solution $Y$ to the surface integral equation \eqref{main-blow-1} satisfies the estimate
\begin{equation}
\norm{Y}_{L^{2}(\Sigma)}=\mathcal{O}(h).
\label{est-sur-Y}
\end{equation}
\end{lemma}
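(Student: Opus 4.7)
The plan is to follow the sketch given in subsection 4.4 of the formal arguments, but to make each estimate rigorous. The starting point is that $Y = u_a^t|_\Sigma$ where $u_a^t$ solves \eqref{scat-3a}--\eqref{scat-3d} with $h_\ast=a^{1-h_1-s}$, and by rescaling we set $h = a^{(s+h_1-1)/2}$ so that the transmission jump becomes $[\partial_\nu u_a^t] = h^{-2}\sigma u_a^t$ on $\Sigma$. I would first truncate the exterior problem: choose $R$ large enough that $\bar{\Omega}_{\Sigma} \subset B_R$ and replace the Sommerfeld condition by the exterior Dirichlet-to-Neumann operator $T:H^{1/2}(\partial B_R)\to H^{-1/2}(\partial B_R)$, transforming the problem into
\begin{equation*}
(\Delta+\kappa_0^2)Y=0\ \text{in}\ B_R\setminus\Sigma,\quad [Y]=0,\ [\partial_\nu Y]-h^{-2}\sigma Y=0\ \text{on}\ \Sigma,\quad \partial_\nu Y - T Y = \partial_\nu u^I - T u^I\ \text{on}\ \partial B_R.
\end{equation*}

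Next I would test this system against $\bar Y$ and integrate by parts to obtain the variational identity
\begin{equation*}
\int_{B_R}|\nabla Y|^2 - \kappa_0^2\int_{B_R}|Y|^2 + h^{-2}\int_\Sigma \bar\sigma |Y|^2 - \langle TY,Y\rangle_{-1/2,1/2} = \langle \partial_\nu u^I - Tu^I, Y\rangle_{-1/2,1/2}.
\end{equation*}
Taking real parts and using the classical Gårding-type splitting $\langle TY,Y\rangle \geq C\|Y\|_{H^{1/2}(\partial B_R)}^2 - \epsilon\|Y\|_{H^1(B_R)}^2 - C(\epsilon)\|Y\|_{L^2(B_R)}^2$ (obtained from $T = T_0 + T_s$ with $T_0$ the DN map of a coercive local problem and $T_s$ smoothing), together with the positivity of $\sigma$ (which is crucial and is exactly why we need $l_M>0$), I would derive the key inequality
\begin{equation*}
(1-\epsilon)\|\nabla Y\|_{L^2(B_R)}^2 + (Ch^{-2}-\kappa_0^2-C(\epsilon))\|Y\|_{L^2(\Sigma)}^2 + C\|Y\|_{H^{1/2}(\partial B_R)}^2 \leq C\|Y\|_{H^{1/2}(\partial B_R)} + O(1).
\end{equation*}

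The main obstacle, and the step that drives the proof, is to bound $\|Y\|_{H^{1/2}(\partial B_R)}$ in terms of $\|Y\|_{L^2(\Sigma)}$ alone. For this I would exploit the integral representation $Y(x) = u^I(x) - h^{-2}\int_\Sigma \Phi_{\kappa_0}(x,y)\sigma(y)Y(y)\,ds(y)$, which is valid pointwise for $x\in\partial B_R$, and use the fact that since $x$ is away from $\Sigma$ the kernel $\Phi_{\kappa_0}(x,\cdot)$ is smooth on $\Sigma$, so that the invertibility properties of the single-layer operator (using that $\kappa_0^2$ is not a Dirichlet eigenvalue in $D$, so the single layer is invertible into $H^{1/2}$) give
\begin{equation*}
\|Y\|_{H^{1/2}(\partial B_R)} \leq C h^{-2}\|Y\|_{L^2(\Sigma)} + O(1).
\end{equation*}
Plugging this back into the previous inequality, and absorbing the term $Ch^{-4}\|Y\|_{L^2(\Sigma)}^2$ into the coefficient $Ch^{-2}\|Y\|_{L^2(\Sigma)}^2$ (which requires $h$ small, but not in a way that destroys the factor $h^{-2}$ due to the dominating positive contribution on the left), and then applying Young's inequality to the right-hand side $\|Y\|_{H^{1/2}(\partial B_R)}$, one first extracts $\|Y\|_{L^2(\Sigma)}^2 = O(h^2)$ after a bootstrap: a first pass yields $\|Y\|_{L^2(\Sigma)} = O(1)$ and therefore $\|Y\|_{H^{1/2}(\partial B_R)}=O(1)$; feeding this into the main inequality and observing that the $Ch^{-2}\|Y\|_{L^2(\Sigma)}^2$ term is controlled by $O(1)$ gives the improved $\|Y\|_{L^2(\Sigma)} = O(h)$. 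The delicate point is maintaining positivity of the coefficient in front of $\|Y\|_{L^2(\Sigma)}^2$ across both passes, which is ensured by choosing $\epsilon$ small and $h$ small independently, and by using the strict positivity $\sigma \geq c_0>0$ inherited from $l_M>0$ and $K\geq 0$.
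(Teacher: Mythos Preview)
Your overall architecture matches the paper's, but there is a genuine gap at the step where you pass from the variational identity to your ``key inequality''. In the variational identity the negative terms are $-\kappa_0^2\|Y\|_{L^2(B_R)}^2$ and (from the G{\aa}rding splitting of $T$) $-C(\epsilon)\|Y\|_{L^2(B_R)}^2$; these involve the \emph{volume} norm $\|Y\|_{L^2(B_R)}$, not $\|Y\|_{L^2(\Sigma)}$. You have silently replaced one by the other. If you try to control $\|Y\|_{L^2(B_R)}$ naively from the representation $Y=u^I-h^{-2}\mathbf{S}_{\kappa_0}[\sigma Y]$ using only $\mathbf{S}_{\kappa_0}:L^2(\Sigma)\to L^2(B_R)$, you get $\|Y\|_{L^2(B_R)}\le O(1)+Ch^{-2}\|Y\|_{L^2(\Sigma)}$, and squaring produces an $h^{-4}\|Y\|_{L^2(\Sigma)}^2$ term on the \emph{wrong} side of the inequality, which cannot be absorbed by the positive $h^{-2}\|Y\|_{L^2(\Sigma)}^2$ term for small $h$. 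Your parenthetical remark about ``absorbing the term $Ch^{-4}\|Y\|_{L^2(\Sigma)}^2$'' is precisely this failure.

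The paper closes this gap by a different, and essential, use of the integral equation: one rewrites it on $\Sigma$ as $\sigma Y=\mathbf{S}_{\kappa_0}|_\Sigma^{-1}\big[h^2(u^I-Y)\big]$ and invokes the invertibility $\mathbf{S}_{\kappa_0}|_\Sigma^{-1}:L^2(\Sigma)\to H^{-1}_\Sigma(\Gamma)$ (Theorem~\ref{Sing-inv} in the appendix), which requires exactly that $\kappa_0^2$ is not a Dirichlet eigenvalue of $D$. This yields $\|\sigma Y\|_{H^{-1}(\Sigma)}\le Ch^2\|Y\|_{L^2(\Sigma)}+O(h^2)$ and hence, via $\mathbf{S}_{\kappa_0}:H^{-1}(\Sigma)\to H^{1/2}(B_R)$, the crucial bound $\|Y\|_{L^2(B_R)}\lesssim \|Y\|_{L^2(\Sigma)}+O(1)$ \emph{without any negative power of $h$}. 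This is where the eigenvalue hypothesis actually enters; your attribution of it to the bound for $\|Y\|_{H^{1/2}(\partial B_R)}$ is misplaced, since for $x\in\partial B_R$ the kernel $\Phi_{\kappa_0}(x,\cdot)$ is smooth on $\Sigma$ and that estimate is elementary. Once \eqref{e9} is in hand, the two-pass bootstrap you describe goes through exactly as in the paper.
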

\begin{proof}
As a first step, we observe that the scattering problem \eqref{scat-3a}-\eqref{scat-3d} can be transformed into the equivalent boundary value problem 
\begin{equation}
\begin{aligned}
&(\Delta +\kappa_{0}^2)Y =0,\ \text{in} \ B_{R}\setminus \Sigma, \\
&[Y]=0,\ \left[\frac{\partial Y}{\partial \nu} \right]- h^{-2} \sigma Y=0, \ \text{on} \ \Sigma, \\
&\left[\frac{\partial Y}{\partial \nu} \right]-TY= \frac{\partial u^{I}}{\partial \nu} -Tu^{I} ,\ \text{on} \ \partial B_{R},
\end{aligned}
\label{e4}
\end{equation}
where $T: H^{\frac{1}{2}}(\partial B_R) \rightarrow H^{-\frac{1}{2}}(\partial B_R) $ is the Dirichlet to Neumann (D-N) map for the exterior problem on $\mathbb{R}^{3}\setminus B_R $ (see \cite{CK}).\\
Proceeding as in \eqref{e1}-\eqref{e3}, we derive
\begin{equation}
\int_{B_R} \vert \nabla Y \vert^2 -\kappa_{0}^{2} \int_{B_R} \vert Y \vert^2 + h^{-2}\int_{\Sigma} \overline{\sigma}  \vert Y \vert^2 -\langle TY,Y \rangle_{-\frac{1}{2},\frac{1}{2}}=\left\langle  \frac{\partial u^{I}}{\partial \nu}-Tu^{I},Y \right\rangle_{-\frac{1}{2},\frac{1}{2}}.
\label{e5}
\end{equation}
Now the operator $T$ can be decomposed as $T=T_{0}+T_{1} $ (see \cite{CK}) where 
\begin{equation}
\langle -T_{0}Y,Y \rangle_{-\frac{1}{2},\frac{1}{2}} \geq C \norm{Y}^{2}_{\frac{1}{2}},
\label{e6}
\end{equation}
and $T_{1} $ is smoothing and maps from $H^{\frac{1}{2}}(\partial B_R) $ to $H^{\frac{1}{2}}(\partial B_R) $.\\
Also we can write
\begin{equation}
\begin{aligned}
\left\vert \langle T_{1}Y,Y \rangle_{-\frac{1}{2},\frac{1}{2}} \right\vert &= \left\vert \langle T_{1}Y,Y \rangle_{L^2,L^2} \right\vert \\
&\leq \norm{T_{1} Y}_{L^{2}(\partial B_R)} \norm{ Y}_{L^{2}(\partial B_R)}\\
&\leq  \norm{T_{1} Y}_{H^{\frac{1}{2}}(\partial B_R)} \norm{ Y}_{L^{2}(\partial B_R)}\\
&\lesssim  \norm{ Y}_{H^{\frac{1}{2}}(\partial B_R)} \norm{ Y}_{L^{2}(\partial B_R)}\\
&\lesssim \epsilon \norm{ Y}^{2}_{H^{\frac{1}{2}}(\partial B_R)}+\frac{1}{4 \epsilon} \norm{ Y}^{2}_{L^{2}(\partial B_R)} \\
&\lesssim \epsilon \norm{ Y}^{2}_{H^{\frac{1}{2}}(\partial B_R)}+\frac{1}{4 \epsilon} \norm{ Y}^{2}_{H^{s}(\partial B_R)},\ s\in (0,\frac{1}{2})  \\
&\lesssim \epsilon \norm{ Y}^{2}_{H^{1}( B_R)}+\frac{1}{4 \epsilon} \norm{ Y}^{2}_{H^{s+\frac{1}{2}}( B_R)} \\
&\stackrel{interpolation}{\lesssim}  \epsilon \norm{ Y}^{2}_{H^{1}( B_R)}+\frac{1}{4 \epsilon} \left[ \norm{ Y}^{2(1-s-\frac{1}{2})}_{L^{2}( B_R)} \norm{ Y}^{2(s+\frac{1}{2})}_{H^{1}( B_R)} \right]\\
&\lesssim \frac{5 \epsilon}{4} \norm{ Y}^{2}_{H^{1}( B_R)} + C(\epsilon) \norm{ Y}^{2}_{L^{2}( B_R)},
\end{aligned}
\label{e7}
\end{equation}
where $\epsilon \in (0,1) $ is chosen to be sufficiently small.\\
Now using the surface integral equation and the fact that $\mathbf{S}_{\kappa_0}:H^{-1}(\Sigma) \rightarrow H^{\frac{1}{2}}(B_R) $, we can write
\begin{equation}
\begin{aligned}
\norm{Y}_{L^2(B_R)}&\leq \norm{h^{-2} \mathbf{S}_{\kappa_0}\left[\sigma Y\right]}_{L^{2}(B_R)} +\mathcal{O}(1)\\
&\leq h^{-2} \norm{ \mathbf{S}_{\kappa_0}\left[\sigma Y\right]}_{H^{\frac{1}{2}}(B_R)} +\mathcal{O}(1)\\
&\lesssim h^{-2} \norm{\sigma Y}_{H^{-1}(\Sigma)} +\mathcal{O}(1).
\end{aligned}
\label{e8}
\end{equation}
Again we can re-write \eqref{main-blow-1} as $\sigma Y=\mathbf{S}_{\kappa_0}^{-1} \left[-h^2 Y+h^2 u^I \right] $ and therefore using the fact (see theorem \ref{Sing-inv}) that \[\mathbf{S}_{\kappa_0}^{-1}: L^2(\Sigma) \rightarrow H^{-1}(\Sigma) ,\] from \eqref{e8}, we obtain
\begin{equation}
\begin{aligned}
\norm{Y}_{L^2(B_R)}&\lesssim  \norm{ Y}_{L^2(\Sigma)} +\mathcal{O}(1).
\end{aligned}
\label{e9}
\end{equation}
Next using \eqref{e7} and \eqref{e9} in \eqref{e5}, we can write
\begin{equation}
\begin{aligned}
&\int_{B_R} \vert \nabla Y \vert^2 -\kappa_{0}^{2} \int_{B_R} \vert Y \vert^2 + h^{-2}\int_{\Sigma} \overline{\sigma}  \vert Y \vert^2 +\langle -T_{0}Y,Y \rangle_{-\frac{1}{2},\frac{1}{2}}\\
&\qquad =\left\langle  \frac{\partial u^{I}}{\partial \nu}-Tu^{I},Y \right\rangle_{-\frac{1}{2},\frac{1}{2}}+\langle T_{1}Y,Y \rangle_{-\frac{1}{2},\frac{1}{2}}\\
&\qquad \leq C \norm{Y}_{H^{\frac{1}{2}}(\partial B_R)}+\frac{5C\epsilon}{4} \norm{Y}_{H^{1}(B_R)}^{2}+C(\epsilon) \norm{Y}_{L^2(B_R)}^{2}\\
&\qquad \lesssim C \norm{Y}_{H^{\frac{1}{2}}(\partial B_R)}+\frac{5C\epsilon}{4} \norm{Y}_{H^{1}(B_R)}^{2}+C(\epsilon) \norm{Y}_{L^2(\Sigma)}^{2}+\mathcal{O}(1).
\end{aligned}
\label{e10}
\end{equation}
Also using \eqref{e6}, we can write
\begin{equation}
\begin{aligned}
&\int_{B_R} \vert \nabla Y \vert^2 -\kappa_{0}^{2} \int_{B_R} \vert Y \vert^2 + h^{-2}\int_{\Sigma} \overline{\sigma}  \vert Y \vert^2 +\langle -T_{0}Y,Y \rangle_{-\frac{1}{2},\frac{1}{2}} \\
&\qquad \geq \int_{B_R} \vert \nabla Y \vert^2 -\kappa_{0}^{2} \int_{B_R} \vert Y \vert^2 + h^{-2} \int_{\Sigma} \overline{\sigma}  \vert Y \vert^2 +C \norm{Y}_{H^{\frac{1}{2}}(\partial B_R)}^{2}.
\end{aligned}
\label{e11}
\end{equation}
From \eqref{e9}-\eqref{e11}, we obtain
\begin{equation}
\begin{aligned}
\left(1-\frac{5C \epsilon}{4} \right) \norm{\nabla Y}_{L^{2}(B_R)}^{2}&+ \left(-\kappa_{0}^2+Ch^{-2}-\frac{5C\epsilon}{4}-C(\epsilon) \right) \norm{Y}_{L^2(\Sigma)}^{2} + C \norm{Y}_{H^{\frac{1}{2}}(\partial B_R)}^{2} \\
& \leq C \norm{Y}_{H^{\frac{1}{2}}(\partial B_R)}+\mathcal{O}(1) \leq C \norm{Y}_{H^{1}(B_R)}+\mathcal{O}(1).
\end{aligned}
\label{e12}
\end{equation}
Now from the fact $\mathbf{S}_{\kappa_0}:H^{-\frac{1}{2}}(\Sigma) \rightarrow H^{1}(B_R) $, it follows that
\begin{equation}
\begin{aligned}
\norm{Y}_{ H^{1}(B_R)} \leq h^{-2} \norm{\mathbf{S}_{\kappa_0}[\sigma Y]}_{ H^{1}(B_R)} +\mathcal{O}(1) \leq h^{-2} \norm{\sigma Y}_{H^{-\frac{1}{2}}(\Sigma)}+\mathcal{O}(1) &\leq h^{-2} \norm{\sigma Y}_{L^{2}(\Sigma)}+\mathcal{O}(1) \\
&\lesssim h^{-2} \norm{ Y}_{L^{2}(\Sigma)}+\mathcal{O}(1).
\end{aligned}
\notag
\end{equation} 
Using this in \eqref{e12}, we deduce that $\norm{ Y}_{L^{2}(\Sigma)}=\mathcal{O}(1) $ and therefore using \eqref{e9}, we obtain $\norm{ Y}_{L^{2}(B_R)}=\mathcal{O}(1) $.\\
Also from \eqref{e12}, we can write
\begin{equation}
\norm{\nabla Y}_{L^{2}(B_R)}^{2}\leq C \norm{Y}_{L^{2}(B_R)} + C \norm{\nabla Y}_{L^{2}(B_R)} +\mathcal{O}(1) \leq C \norm{\nabla Y}_{L^{2}(B_R)} + \mathcal{O}(1),
\notag
\end{equation}
whence it follows that $\norm{\nabla Y}_{L^{2}(B_R)}=\mathcal{O}(1) $.\\
Now using the fact that $\norm{ Y}_{H^{1}(B_R)}=\mathcal{O}(1) $ in \eqref{e12}, we deduce that $\norm{ Y}_{L^{2}(\Sigma)}=\mathcal{O}(h) $.
\end{proof}
Using the estimate $\eqref{est-sur-Y} $, it is easy to see that 
\begin{equation}
 Y^{\infty}(\hat{x},\theta)-Y^{\infty}_{D}(\hat{x},\theta)=\mathcal{O}(h),
 \label{sur-est-2000}
 \end{equation} 
 where 
\begin{itemize}
\item if $\Sigma $ is an open surface, $Y_{D}$ is the unique solution to the Dirichlet crack problem 
\begin{equation}
\left(\Delta+\kappa_{0}^{2} \right)Y_{D}=0, \ \text{in}\ \mathbb{R}^{3}\setminus \Sigma,
\label{scat-4a}
\end{equation}
\begin{equation}
Y_{D}=0,\ \text{on}\ \Sigma,
\label{scat-4b}
\end{equation}
with the Sommerfeld radiation conditions satisfied by $Y_{D}-u^{I} $, and
\item if $\Sigma=\partial D $ for some connected open subset $D \subset \mathbb{R}^3$, 
then $Y_{D}$ is the unique solution to the exterior Dirichlet problem 
\begin{equation}
\left(\Delta+\kappa_{0}^{2} \right)Y_{D}=0, \ \text{in}\ \mathbb{R}^{3}\setminus \overline{D},
\label{scat-5a}
\end{equation}
\begin{equation}
Y_{D}=0,\ \text{on}\ \partial D,
\label{scat-5b}
\end{equation} 
with the Sommerfeld radiation conditions satisfied by $Y_{D}-u^{I} $.
\end{itemize}
Next using lemma \ref{Lipp-sur} and lemma \ref{est-sur}, we deduce the following estimates for the solution $Y$ and its gradient $\nabla Y $.
\begin{proposition}\label{blow-est-sur}
The solution $Y$ to the surface integral equation \eqref{main-blow-1} satisfies the following estimates.\\
\begin{itemize}
\item If $h_{*}=\mathcal{O}(1),\ a \rightarrow 0 $, then
\begin{equation}
\norm{Y}_{L^{\infty}(\Sigma)}=\mathcal{O}(1),\ \norm{Y}_{W^{1,p}(\Sigma)}=\mathcal{O}(1). 
\label{Y-non-blow}
\end{equation}
\item If $h_{*}=a^{1-h_1-s}$, $s+h_1>1 $, then
\begin{equation}
\norm{Y}_{L^{\infty}(\Sigma)}=\mathcal{O}(a^{\frac{3}{2}(1-s-h_1)}),\ \norm{Y}_{W^{1,p}(\Sigma)}=\mathcal{O}(a^{\frac{3}{2}(1-s-h_{1})} ). 
\label{Y-blow}
\end{equation}
\end{itemize}
\end{proposition}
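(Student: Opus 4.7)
The plan is a two–step bootstrap on the surface integral equation
\[
Y(z)+h_{*}\int_{\Sigma}\Phi_{\kappa_0}(z,y)\sigma(y)Y(y)\,ds(y)=u^{I}(z),\qquad z\in\Sigma ,
\]
using two ingredients that have already been established: the $L^{2}(\Sigma)$ estimate on $Y$ (the trivial estimate $\|Y\|_{L^{2}(\Sigma)}=\mathcal{O}(1)$ in Case (i), obtained from the Fredholm invertibility proved in Lemma \ref{Lipp-sur}; the estimate $\|Y\|_{L^{2}(\Sigma)}=\mathcal{O}(h)$ in Case (ii), which is exactly Lemma \ref{est-sur}), together with the continuity of the single layer potential $\mathbf{S}_{\kappa_0}\colon L^{p}(\Sigma)\to W^{1,p}(\Sigma)$ for every $p\in(1,\infty)$ (\cite{MM}, already recalled in the proof of Lemma \ref{Lipp-sur}) and the two-dimensional Sobolev embeddings on $\Sigma$: $W^{1,2}(\Sigma)\hookrightarrow L^{q}(\Sigma)$ for every $q<\infty$ and $W^{1,p}(\Sigma)\hookrightarrow L^{\infty}(\Sigma)$ for every $p>2$.

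Case (i), $h_{*}=\mathcal{O}(1)$. The first bootstrap step rewrites the integral equation as $Y=u^{I}-h_{*}\mathbf{S}_{\kappa_0}[\sigma Y]$ and uses $\sigma Y\in L^{2}(\Sigma)$ with uniform $\mathcal{O}(1)$ norm to conclude $\mathbf{S}_{\kappa_0}[\sigma Y]\in W^{1,2}(\Sigma)$ with the same $\mathcal{O}(1)$ bound; Sobolev embedding then gives $\|Y\|_{L^{p}(\Sigma)}=\mathcal{O}(1)$ for every $p<\infty$. A second application, now with $\sigma Y\in L^{p}(\Sigma)$ and $p>2$ chosen, yields $\mathbf{S}_{\kappa_0}[\sigma Y]\in W^{1,p}(\Sigma)\hookrightarrow L^{\infty}(\Sigma)$ and hence both claimed bounds $\|Y\|_{L^{\infty}(\Sigma)}=\mathcal{O}(1)$ and $\|Y\|_{W^{1,p}(\Sigma)}=\mathcal{O}(1)$.

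Case (ii), $h_{*}=a^{1-h_1-s}=h^{-2}$ with $h=a^{(s+h_1-1)/2}\ll 1$. The same bootstrap is run, but we must now track the powers of $h$ that come from the multiplicative factor $h^{-2}$ in front of the single layer. Starting from $\|\sigma Y\|_{L^{2}(\Sigma)}=\mathcal{O}(h)$, the mapping $\mathbf{S}_{\kappa_0}\colon L^{2}\to W^{1,2}\hookrightarrow L^{q}$ $(q<\infty)$ gives $\|\mathbf{S}_{\kappa_0}[\sigma Y]\|_{L^{q}(\Sigma)}=\mathcal{O}(h)$ and the integral equation then yields $\|Y\|_{L^{q}(\Sigma)}=\mathcal{O}(1)+h^{-2}\mathcal{O}(h)=\mathcal{O}(h^{-1})$. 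Fixing some $p>2$ and applying $\mathbf{S}_{\kappa_0}\colon L^{p}\to W^{1,p}\hookrightarrow L^{\infty}$ to $\sigma Y\in L^{p}$ with norm $\mathcal{O}(h^{-1})$ then produces $\|\mathbf{S}_{\kappa_0}[\sigma Y]\|_{L^{\infty}(\Sigma)}=\mathcal{O}(h^{-1})$, whence $\|Y\|_{L^{\infty}(\Sigma)}=\mathcal{O}(1)+h^{-2}\mathcal{O}(h^{-1})=\mathcal{O}(h^{-3})=\mathcal{O}(a^{\tfrac{3}{2}(1-s-h_1)})$. The very same chain, but taking $W^{1,p}$ norms throughout in the last step (using $\mathbf{S}_{\kappa_0}\colon L^{p}\to W^{1,p}$), delivers $\|Y\|_{W^{1,p}(\Sigma)}=\mathcal{O}(h^{-3})=\mathcal{O}(a^{\tfrac{3}{2}(1-s-h_1)})$.

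The only delicate point—and essentially the crux of the argument—is the need for the two-stage bootstrap in Case (ii): a one-shot use of $\mathbf{S}_{\kappa_0}\colon L^{2}\to W^{1,2}$ does \emph{not} suffice, since on a two-dimensional surface $W^{1,2}$ fails to embed into $L^{\infty}$. We therefore must first upgrade the $L^{2}$ information on $Y$ to an $L^{p}$ bound for some $p>2$ (losing one factor $h^{-1}$ in the process) before a second application of the smoothing property of $\mathbf{S}_{\kappa_0}$ can be combined with the embedding $W^{1,p}\hookrightarrow L^{\infty}$. Carrying out this two-step loop gives exactly the exponent $-3$ in $h$, i.e.\ the power $\tfrac{3}{2}(1-s-h_1)$ in $a$, announced in \eqref{Y-blow}.
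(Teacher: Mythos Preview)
Your proof is correct and follows essentially the same route as the paper's: in both cases one starts from the $L^{2}(\Sigma)$ control on $Y$ (trivial in Case~(i) via Lemma~\ref{Lipp-sur}, and $\mathcal{O}(h)$ in Case~(ii) via Lemma~\ref{est-sur}), then bootstraps through the integral equation using $\mathbf{S}_{\kappa_0}\colon L^{2}\to H^{1}(\Sigma)$ followed by $\mathbf{S}_{\kappa_0}\colon L^{p}\to W^{1,p}(\Sigma)$ together with the two-dimensional Sobolev embeddings, picking up one factor $h^{-2}$ at each pass so as to arrive at the exponent $h^{-3}$. The only cosmetic difference is that the paper records the intermediate step as an $H^{1}(\Sigma)$ bound on $Y$ before embedding into $L^{p}$, whereas you phrase it directly as an $L^{q}$ bound; the content is identical.
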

\begin{proof}
\begin{itemize}
\item If $h_{*}=\mathcal{O}(1) $, the estimates \eqref{Y-non-blow} follow directly from the invertibility of the surface integral equation \eqref{main-blow-1} established in lemma \ref{Lipp-sur}. 
\item If $h_{*}=a^{1-h_1-s}$, $s+h_1>1 $, we use the $L^2$ estimate for $Y$ established in lemma \ref{est-sur} as follows. Recall that $h=a^{\frac{s+h_1-1}{2}} $. \\
Now using \eqref{main-blow-1} and \eqref{est-sur-Y}, we can deduce
\begin{equation}
\begin{aligned}
\norm{Y}_{H^{1}(\Sigma)}&\leq \norm{u^{I}}_{H^{1}(\Sigma)}+h^{-2} \norm{\mathbf{S}_{\kappa_0}[\sigma Y]}_{H^{1}(\Sigma)}\\
&\leq \norm{u^{I}}_{H^{1}(\Sigma)}+C h^{-2} \norm{Y}_{L^2(\Sigma)}= \mathcal{O}(h^{-1}),
\end{aligned}
\notag
\end{equation}
and therefore for $p \in (1,\infty) $,
\begin{equation}
\begin{aligned}
\norm{Y}_{W^{1,p}(\Sigma)} &\leq \norm{u^{I}}_{W^{1,p}(\Sigma)}+h^{-2} \norm{\mathbf{S}_{\kappa_0}[\sigma Y]}_{W^{1,p}(\Sigma)}\\
&\leq \norm{u^{I}}_{W^{1,p}(\Sigma)}+C h^{-2} \norm{\sigma Y}_{L^p(\Sigma)}\\
&\leq \norm{u^{I}}_{W^{1,p}(\Sigma)}+C h^{-2} \norm{Y}_{L^p(\Sigma)},\ \text{since} \ \sigma \ \text{is bounded}\\
&\leq \norm{u^{I}}_{W^{1,p}(\Sigma)}+C h^{-2} \norm{Y}_{H^1(\Sigma)}=\mathcal{O}(h^{-3})=\mathcal{O}(a^{\frac{3}{2}(1-s-h_{1})}).
\end{aligned}
\notag
\end{equation}
Since $Y \in W^{1,p}(\Sigma) $ for $p>2 $, it follows that $Y \in C^{0,\eta}(\bar{\Sigma}), \eta=1-\frac{2}{p} $  and hence we can derive the estimate 
\[\norm{Y}_{L^{\infty}(\Sigma)}\leq C \norm{Y}_{W^{1,p}(\Sigma)}=\mathcal{O}(a^{\frac{3}{2}(1-s-h_1)}) . \]
\end{itemize}
\end{proof}
\subsection{The asymptotic approximations}
As in the case of volumetric distributions, we now describe the proof of the results in the following regime where $\gamma=1, 1<s+h_1<\min \{ \frac{3}{2}-t,2-h_1\} $ and when the frequency is near the resonance with $l_M >0 $. The proofs of the other cases follow similarly.\\
In this case, as in \eqref{est-vol-0}, the algebraic system \eqref{LAS-1-theorem} can be rewritten as
\begin{equation}
Y_m+\sum_{{j=1} \atop {j\neq m}}^{M} \Phi_{\kappa_0}(z_m,z_j) \overline{\mathbf{C}} Y_j a^{1-h_1}= u^{I}(z_m),
\label{est-sur-0}
\end{equation}
where $Y_j=-\mathbf{C}^{-1} Q_j,\ \mathbf{C}=\overline{\mathbf{C}} a^{1-h_1},\ j=1,\dots,M $.\\
To compare this with the surface integral equation
\begin{equation}
Y(z)+a^{1-h_1-s} \int_{\Sigma} \Phi_{\kappa_0}(z,y) K^{M}(y) \overline{\mathbf{C}} Y(y) ds(y)=u^{I}(z),
\label{est-sur-0a}
\end{equation}
for $m=1,\dots,M $, we rewrite \eqref{est-sur-0a} as
\begin{equation}
\begin{aligned}
&Y(z_m)+a^{1-s-h_1}\sum_{{j=1} \atop {j\neq m}}^{M} \Phi_{\kappa_0}(z_m,z_j) \overline{\mathbf{C}} Y(z_j) a^{s}\\
&= u^{I}(z_m)+a^{1-s-h_1} \underbrace{ \left[ \sum_{{j=1} \atop {j \neq m}}^{M}  \Phi_{\kappa_0}(z_m,z_j) \overline{\mathbf{C}} Y(z_j) a^{s}
 - \sum_{{j=1} \atop {j \neq m}}^{[a^{-s}]}  \Phi_{\kappa_0}(z_m,z_j) K^M(z_j) \overline{\mathbf{C}} Y(z_j) \vert \Sigma_{j} \vert \right]}_{A_2}\\
&\quad +a^{1-s-h_1} \underbrace{ \left[ \sum_{{j=1} \atop {j \neq m}}^{[a^{-s}]}  \Phi_{\kappa_0}(z_m,z_j) K^M(z_j) \overline{\mathbf{C}} Y(z_j) \vert \Sigma_{j} \vert - \int_{{\cup_{{j=1} \atop {j \neq m}}^{[a^{-s}]}}\Sigma_j}  \Phi_{\kappa_0}(z_m,y) K^M(y) \overline{\mathbf{C}} Y(y) ds(y) \right]}_{B_2} \\
&\quad -a^{1-s-h_1}\underbrace{\int_{\Sigma_m}  \Phi_{\kappa_0}(z_m,y) K^M(y) \overline{\mathbf{C}} Y(y) ds(y)}_{C_2} -a^{1-s-h_1}\underbrace{\int_{\Sigma \setminus \cup_{j=1 }^{[a^{-s}]} \Sigma_j}  \Phi_{\kappa_0}(z_m,y) K^M(y) \overline{\mathbf{C}} Y(y) ds(y)}_{D_2} .
\end{aligned}
\label{est-sur-1}
\end{equation}
\begin{remark}
We note that in case $\Sigma $ is parametrized by more than one chart, we would need to additionally estimate the integral over the part (image of the chart) that doesn't contain the point $z_m $. But this part being away from $z_m $ would imply that the fundamental solution is smooth and therefore the error estimate for this integral would only be better than the ones mentioned above in the splitting. \qed
\end{remark}
The terms $A_2,B_2,C_2 $ and $D_2$ can be estimated by closely following the arguments in the case of volumetric distributions.\\
Let us begin with the term $C_2$. We recall that by our assumption, $\Sigma_m $ is contained in a single chart and hence it is easy to observe (using the local co-ordinates, if necessary) that for $r<\frac{1}{2} a^{\frac{s}{2}} $, the image of the ball $B(z_m,r) $ of radius $r$ is contained in $\Sigma_m $. By an abuse of notation, we shall identify between $B(z_m,r) $ and its representation in the local chart. Therefore using \eqref{Y-blow} and continuing as in the proof of \eqref{est-vol-3}, we can write
\begin{equation}
\begin{aligned}
\vert C_2 \vert &=\left\vert \int_{\Sigma_m} \Phi_{\kappa_0}(z_m,y) K^{M}(y) \overline{\mathbf{C}} Y(y) ds(y) \right\vert \\
&\qquad \leq \norm{K^M}_{L^{\infty}(\Sigma)} \vert \overline{\mathbf{C}} \vert \norm{Y}_{L^{\infty}(\Sigma)} \left\vert \int_{\Sigma_m} \Phi_{\kappa_0} (z_m,y) ds(y) \right\vert \\
&\qquad \leq \frac{C}{4 \pi} a^{\frac{3}{2}(1-s-h_{1})} \Big(\int_{B(z_m,r)} \frac{1}{\vert z_m-y \vert} ds(y) + \int_{\Sigma_m \setminus B(z_m,r)} \frac{1}{\vert z_m-y \vert} ds(y) \Big), \ \text{where}\ r<\frac{1}{2} a^{\frac{s}{2}}\\
&\qquad \leq \frac{C}{4 \pi} a^{\frac{3}{2}(1-s-h_{1})} \left(2 \pi r+\frac{1}{r} \left[a^s- \pi r^2 \right] \right) \leq \frac{C}{4 \pi} a^{\frac{3}{2}(1-s-h_{1})} a^{\frac{s}{2}},
\end{aligned}
\notag
\end{equation}
Hence, we deduce that
\begin{equation}
 C_2  = \mathcal{O}(a^{\frac{3}{2}(1-s-h_1)+\frac{s}{2}}).
\label{est-sur-2}
\end{equation}
Let us now estimate the term $B_2$. To do so, we split it into two parts $B_{2,1} $ and $B_{2,2} $ given by
\begin{equation}
\begin{aligned}
B_{2} &=\sum_{{j=1} \atop {j \neq m}}^{[a^{-s}]}  \Phi_{\kappa_0}(z_m,z_j) K^M(z_j) \overline{\mathbf{C}} Y(z_j) \vert \Sigma_j \vert -\int_{{\cup_{{j=1} \atop {j \neq m}}^{[a^{-s}]}}\Sigma_j}  \Phi_{\kappa_0}(z_m,y) K^M(y) \overline{\mathbf{C}} Y(y) ds(y) \\
&=\underbrace{\sum_{{j=1} \atop {j \neq m}}^{[a^{-s}]} \int_{\Sigma_j} \Phi_{\kappa_0}(z_m,z_j) K^M(z_j) \overline{\mathbf{C}} Y(z_j) ds(y) - \sum_{{j=1} \atop {j \neq m}}^{[a^{-s}]} \int_{\Sigma_j} \Phi_{\kappa_0}(z_m,y) K^M(z_j) \overline{\mathbf{C}} Y(y) ds(y)}_{B_{2,1}}\\
&\quad +\underbrace{\sum_{{j=1} \atop {j \neq m}}^{[a^{-s}]} \int_{\Sigma_j} \Phi_{\kappa_0}(z_m,y) K^M(z_j) \overline{\mathbf{C}} Y(y) ds(y)-\sum_{{j=1} \atop {j \neq m}}^{[a^{-s}]} \int_{\Sigma_j} \Phi_{\kappa_0}(z_m,y) K^M(y) \overline{\mathbf{C}} Y(y) ds(y)}_{B_{2,2}}.
\end{aligned}
\label{est-sur-4}
\end{equation}
As in the case of volumetric distributions, relative to each $\Sigma_m $, we distinguish the other bubbles as near and far ones using squares (or quadrilaterals). In such an arrangement, the number of squares upto the $n^{th}$ layer is $(2n+1)^2, \ n=0,\dots,\left[a^{-\frac{s}{2}}\right] $ and $\Sigma_m $ is located at the centre. Hence the number of bubbles in the $n^{th} $ layer will be atmost $\left[(2n+1)^{2}-(2n-1)^{2} \right]$ and their distance from $D_m$ is more than $n \left(a^{\frac{s}{2}}-\frac{a}{2} \right) $. \\
To estimate the term $B_{2,1} $, we notice that 
\begin{equation}
\begin{aligned}
\vert B_{2,1} \vert 
&\leq \norm{K^M}_{L^{\infty}(\Sigma)} \vert \overline{\mathbf{C}} \vert \sum_{{j=1} \atop {j \neq m}}^{[a^{-s}]} \left[\int_{\Sigma_j} \left\vert \left[\Phi_{\kappa_0}(z_m,z_j)-\Phi_{\kappa_0}(z_m,y) \right] Y(z_j)\right\vert+  \int_{\Sigma_j} \vert \Phi_{\kappa_0}(z_m,y) \vert \vert Y(z_j)-Y(y)\vert\right] \\
&\leq \norm{K^M}_{L^{\infty}(\Sigma)} \vert \overline{\mathbf{C}} \vert \left[\underbrace{\sum_{{j=1} \atop {j \neq m}}^{[a^{-s}]}\int_{\Sigma_j} \left\vert \left[\Phi_{\kappa_0}(z_m,z_j)-\Phi_{\kappa_0}(z_m,y) \right] Y(z_j)\right\vert}_{I}+ \underbrace{\sum_{{j=1} \atop {j \neq m}}^{[a^{-s}]} \int_{\Sigma_j} \vert \Phi_{\kappa_0}(z_m,y) \vert \vert Y(z_j)-Y(y)\vert}_{II}\right]. 
\end{aligned}
\label{est-sur-5}
\end{equation}
Now using Morrey's inequality, $II $ can be treated as
\begin{equation}
\begin{aligned}
II &\leq \sum_{n=1}^{[a^{-\frac{s}{2}}]}  \left[(2n+1)^2-(2n-1)^2 \right] \frac{C}{n \left(a^{\frac{s}{2}}-\frac{a}{2} \right)} \int_{\Sigma_j} \vert Y(z_j)-Y(y) \vert \\
&\leq \sum_{n=1}^{[a^{-\frac{s}{2}}]}  \left[(2n+1)^2-(2n-1)^2 \right] \frac{C}{n \left(a^{\frac{s}{2}}-\frac{a}{2} \right)} \int_{\Sigma_j} \vert z_j-y \vert^{\eta}\ [Y]_{C^{0,\eta}(\overline{\Sigma})} \\
&\leq \sum_{n=1}^{[a^{-\frac{s}{2}}]}  \left[(2n+1)^2-(2n-1)^2 \right] \frac{C}{n \left(a^{\frac{s}{2}}-\frac{a}{2} \right)}  \int_{\Sigma_j} \vert z_j-y \vert^{\eta} \norm{Y}_{W^{1,p}(\Sigma)} \\
&\leq \sum_{n=1}^{[a^{-\frac{s}{2}}]}  \left[(2n+1)^2-(2n-1)^2 \right] \frac{C}{n \left(a^{\frac{s}{2}}-\frac{a}{2} \right)} a^{\frac{s}{2}\eta} a^{s} \norm{Y}_{W^{1,p}(\Sigma)}.
\end{aligned}
\label{est-sur-6}
\end{equation}
Similarly, we see that $I $ satisfies the estimate
\begin{equation}
\begin{aligned}
I &\leq \norm{Y}_{L^{\infty}(\Sigma)} \sum_{{j=1} \atop {j \neq m}}^{[a^{-s}]} \int_{\Sigma_j} \left\vert \Phi_{\kappa_0}(z_m,y)-\Phi_{\kappa_0}(z_m,z_j) \right\vert \\
&\leq C \norm{Y}_{L^{\infty}(\Sigma)} a^{\frac{s}{2}} a^{s} \sum_{n=1}^{[a^{-\frac{s}{2}}]}  \left[(2n+1)^2-(2n-1)^2 \right] \frac{C}{n \left(a^{\frac{s}{2}}-\frac{a}{2} \right)} \left[\frac{1}{n \left(a^{\frac{s}{2}}-\frac{a}{2} \right)} + \kappa_{0} \right].
\end{aligned}
\label{est-sur-7}
\end{equation}
Therefore using \eqref{Y-blow}, we can infer that
\begin{equation}
\begin{aligned}
\vert B_{2,1} \vert &\leq \norm{K^M}_{L^{\infty}(\Sigma)} \vert \overline{\mathbf{C}} \vert \sum_{n=1}^{[a^{-\frac{s}{2}}]}  \left[(2n+1)^2-(2n-1)^2 \right] a^{\frac{s}{2}} a^{s} \frac{C}{n \left(a^{\frac{s}{2}}-\frac{a}{2} \right)} \left[\frac{1}{n \left(a^{\frac{s}{2}}-\frac{a}{2} \right)} + \kappa_{0} \right] \norm{Y}_{L^{\infty}(\Sigma)} \\
&\quad +\norm{K^M}_{L^{\infty}(\Sigma)} \vert \overline{\mathbf{C}} \vert \sum_{n=1}^{[a^{-\frac{s}{2}}]}  \left[(2n+1)^2-(2n-1)^2 \right] \frac{C}{n \left(a^{\frac{s}{2}}-\frac{a}{2} \right)} a^{\frac{s}{2}\eta} a^{s} \norm{Y}_{W^{1,p}(\Sigma)} \\
&=\mathcal{O}\left(\sum_{n=1}^{[a^{-\frac{s}{2}}]} a^{\frac{s}{2}} a^{\frac{3}{2}(1-s-h_1)} \left[\frac{8}{n}+\frac{2}{n^2} \right] +a^{\frac{s}{2}+\frac{s\eta}{2}} a^{\frac{3}{2}(1-s-h_1)} \left[8+\frac{2}{n} \right]\right)\\
& =\mathcal{O}(a^{\frac{3}{2}(1-s-h_1)+\frac{s}{2}} log \ a)+\mathcal{O}(a^{\frac{3}{2}(1-s-h_1)+\frac{s}{2}})+\mathcal{O}\left(a^{\frac{s \eta}{2}+\frac{3}{2}(1-s-h_1)} \right)+\mathcal{O}(a^{\frac{s}{2}+\frac{s\eta}{2}+\frac{3}{2}(1-s-h_1)} log \ a).
\end{aligned}
\label{est-sur-8}
\end{equation}
Similarly using the fact the function $K \in C^{0,\lambda}(\overline{\Sigma}) $, we can deduce 
\begin{equation}
\begin{aligned}
\vert B_{2,2} \vert &\leq  \norm{Y}_{L^{\infty}(\Sigma)} \vert \overline{\mathbf{C}} \vert \sum_{{j=1} \atop {j \neq m}}^{[a^{-s}]} \int_{\Sigma_j}  \left\vert \Phi_{\kappa_0}(z_m,y) \right\vert \left\vert K^M(z_j)-  K^M(y) \right\vert dy \\
&\leq \norm{Y}_{L^{\infty}(\Sigma)} \vert \overline{\mathbf{C}} \vert \sum_{n=1}^{[a^{-\frac{s}{2}}]}  \left[(2n+1)^2-(2n-1)^2 \right] \frac{C}{n \left(a^{\frac{s}{2}}-\frac{a}{2} \right)} \int_{\Sigma_j} \vert z_j-y \vert^{\lambda} [K]_{C^{0,\lambda}(\overline{\Sigma})} dy \\
&\leq \norm{Y}_{L^{\infty}(\Sigma)} \vert \overline{\mathbf{C}} \vert [K]_{C^{0,\lambda}(\overline{\Sigma})} \sum_{n=1}^{[a^{-\frac{s}{2}}]}  \left[(2n+1)^2-(2n-1)^2 \right] \frac{C}{n \left(a^{\frac{s}{2}}-\frac{a}{2} \right)} a^{s} a^{\frac{s}{2}\lambda} \\
&=\mathcal{O}\left(a^{\frac{3}{2}(1-s-h_1)+\frac{s\lambda}{2}} \right)+\mathcal{O}\left(a^{\frac{s}{2}+\frac{s\lambda}{2}+\frac{3}{2}(1-s-h_1)} log \ a\right),
\end{aligned}
\label{est-sur-8a}
\end{equation}
and hence
\begin{equation}
\begin{aligned}
B_{2}&=\mathcal{O}\left(a^{\frac{3}{2}(1-s-h_1)+\frac{s}{2}} log \ a \right)+\mathcal{O}\left(a^{\frac{3}{2}(1-s-h_1)+\frac{s}{2}}\right)+\mathcal{O}\left(a^{\frac{s \eta}{2}+\frac{3}{2}(1-s-h_1)} \right)+\mathcal{O}\left(a^{\frac{s}{2}+\frac{s\eta}{2}+\frac{3}{2}(1-s-h_1)} log \ a \right) \\
&\quad +\mathcal{O}\left(a^{\frac{3}{2}(1-s-h_1)+\frac{s\lambda}{2}} \right)+\mathcal{O}\left(a^{\frac{s}{2}+\frac{s\lambda}{2}+\frac{3}{2}(1-s-h_1)} log \ a\right).
\end{aligned}
\label{est-sur-8b}
\end{equation}
Next we estimate the term $D_2$. Just as in the case of the term $D_1$ in the volumetric distributions, for points $z_m$ located near the boundary $\partial \Sigma $ of $\Sigma $, we split the integral into two parts denoted by $F_m $ and $N_m $. Since $F_m \subset \Sigma   \setminus \cup_{j=1}^{[a^{-s}]} \Sigma_j$, it follows that $\vert F_m \vert $ is of the order $a^{\frac{s}{2}} $ as $a \rightarrow 0 $. To estimate the integral over $N_m$, we divide this part into concentric layers using squares. In this case, we have at most $(2n+1) $ squares intersecting the boundary $\partial \Sigma $, for $n=0,\dots,[a^{-\frac{s}{2}}] $. Therefore the number of bubbles in the $n^{th} $ layer will be at most $[(2n+1)-(2n-1)] $ and their distance from $D_m $ is atleast $n \left(a^{\frac{s}{2}}-\frac{a}{2} \right) $. \\
Keeping this in mind, using \eqref{Y-blow} we can write
\begin{equation}
\begin{aligned}
\vert D_2 \vert &= \left\vert \int_{\Sigma \setminus \cup_{j=1 }^{[a^{-s}]} \Sigma_j}  \Phi_{\kappa_0}(z_m,y) K^M(y) \overline{\mathbf{C}} Y(y) ds(y) \right\vert\\
 &=\left\vert \int_{N_m}  \Phi_{\kappa_0}(z_m,y) K^M(y) \overline{\mathbf{C}} Y(y) ds(y) \right\vert+\left\vert \int_{F_m}  \Phi_{\kappa_0}(z_m,y) K^M(y) \overline{\mathbf{C}} Y(y) ds(y) \right\vert \\
&\leq \mathcal{O}(a^{\frac{3}{2}(1-s-h_{1})}) \norm{K^M}_{L^{\infty}(\Sigma)}  \vert \overline{\mathbf{C}} \vert a^{s} \sum_{l=1}^{[a^{-\frac{s}{2}}]} \frac{1}{d_{ml}} +C a^{\frac{3}{2}(1-s-h_{1})} a^{\frac{s}{2}} \\
&\leq \mathcal{O}(a^{\frac{3}{2}(1-s-h_{1})}) \norm{K^M}_{L^{\infty}(\Sigma)}  \vert \overline{\mathbf{C}} \vert a^{s} \sum_{l=1}^{[a^{-\frac{s}{2}}]} \Big[(2n+1)-(2n-1) \Big] \left(\frac{1}{n \Big( a^{\frac{s}{2}}-\frac{a}{2} \Big)} \right) +C a^{\frac{3}{2}(1-s-h_{1})} a^{\frac{s}{2}}\\
&= \mathcal{O}(a^{\frac{3}{2}(1-s-h_{1})}) \norm{K^M}_{L^{\infty}(\Sigma)} \vert \overline{\mathbf{C}} \vert a^{\frac{s}{2}} \mathcal{O}(log\ a)+\mathcal{O}(a^{\frac{3}{2}(1-s-h_{1})}) \mathcal{O}(a^{\frac{s}{2}}),
\end{aligned}
\notag
\end{equation}
and hence 
\begin{equation}
 D_2 =\mathcal{O}(a^{\frac{3}{2}(1-s-h_{1})+\frac{s}{2}} log\ a).
\label{est-sur-3}
\end{equation}
To estimate the term $A_2$, we write it as
\begin{equation}
\begin{aligned}
A_{2}&=\sum_{{j=1} \atop {j \neq m}}^{M}  \Phi_{\kappa_0}(z_m,z_j) \overline{\mathbf{C}} Y(z_j) a^{s} -\sum_{{j=1} \atop {j \neq m}}^{[a^{-s}]}  \Phi_{\kappa_0}(z_m,z_j) K^M(z_j) \overline{\mathbf{C}} Y(z_j) \vert \Sigma_{j} \vert\\
&=\sum_{{l=1} \atop {{l \neq m} \atop {z_l \in \Sigma_m}}}^{[K^{M}(z_m)]} \Phi_{\kappa_0}(z_m,z_l) \overline{\mathbf{C}} Y(z_l) a^{s}  +\sum_{{j=1} \atop {j \neq m}}^{[a^{-s}]} \sum_{{l=1} \atop {z_l \in \Sigma_j}}^{[K^M(z_j)]} \Phi_{\kappa_0}(z_m,z_l) \overline{\mathbf{C}} Y(z_l) a^{s}\\
&\quad - \sum_{{j=1} \atop {j \neq m}}^{[a^{-s}]}  \Phi_{\kappa_0}(z_m,z_j) K^M(z_j) \overline{\mathbf{C}} Y(z_j) \vert \Sigma_{j} \vert\\
&=\overline{\mathbf{C}} a^{s} \underbrace{\sum_{{l=1} \atop {{l \neq m} \atop {z_l \in \Sigma_m}}}^{[K^{M}(z_m)]} \Phi_{\kappa_0}(z_m,z_l) Y(z_l)}_{E_{3}}+\sum_{{j=1} \atop {j \neq m}}^{[a^{-s}]} \overline{\mathbf{C}} a^{s}  \underbrace{\left[\left(\sum_{{l=1} \atop {z_l \in \Sigma_j}}^{[K^M(z_j)]}\Phi_{\kappa_0}(z_m,z_l) Y(z_l)  \right)-\Phi_{\kappa_0}(z_m,z_j) \left[K^M(z_j)\right] Y(z_j) \right]}_{E_{4}^{j}}.
\end{aligned}
\label{est-sur-9}
\end{equation}
Now the terms $E_3 $, $E^{j}_{4} $ can be estimated just as in the case of terms $E_1 $ and $E^{j}_{2} $ for volumetric distributions and we can deduce that
\begin{equation}
A_2=\mathcal{O}\left(a^{s-t}\right)+\mathcal{O}\left(a^{\frac{3}{2}(1-s-h_1)+\frac{s}{2}} log \ a \right)+\mathcal{O}\left(a^{\frac{s \eta}{2}+\frac{3}{2}(1-s-h_1)}\right)+\mathcal{O}\left(a^{\frac{s}{2}+\frac{s\eta}{2}+\frac{3}{2}(1-s-h_1)} log \ a \right).
\label{est-sur-12a}
\end{equation}
Using the estimates \eqref{est-sur-2},\eqref{est-sur-8b}, \eqref{est-sur-3} and \eqref{est-sur-12a} in \eqref{est-sur-1}, we can write 
\begin{equation}
\begin{aligned}
&Y(z_m)+\sum_{{j=1} \atop {j \neq m}}^{M} \Phi_{\kappa_0}(z_m,z_j) \overline{\mathbf{C}} Y(z_j) a^{1-h_1}\\
&\quad  =u^{I}(z_m)+\mathcal{O}(a^{1-s-h_1}a^{s-t})
+\mathcal{O}\left(a^{\frac{s \lambda}{2}+\frac{5}{2}(1-s-h_1)} \right)+\mathcal{O}(a^{\frac{s}{2}+\frac{s\lambda}{2}+\frac{5}{2}(1-s-h_1)} log \ a)\\
&\quad \quad + \mathcal{O}(a^{\frac{5}{2}(1-s-h_1)+\frac{s}{2}} log \ a)+\mathcal{O}\left(a^{\frac{s \eta}{2}+\frac{5}{2}(1-s-h_1)} \right)+\mathcal{O}(a^{\frac{s}{2}+\frac{s\eta}{2}+\frac{5}{2}(1-s-h_1)} log \ a),
\end{aligned}
\label{est-sur-13}
\end{equation}
and hence
\begin{equation}
\begin{aligned}
&(Y_m-Y(z_m))+\sum_{{j=1} \atop {j \neq m}}^{M} \Phi_{\kappa_0}(z_m,z_j) \overline{\mathbf{C}} (Y_j-Y(z_j)) a^{1-h_1}\\
&\qquad=\mathcal{O}(a^{1-h_1-t})+\mathcal{O}(a^{\frac{5}{2}(1-s-h_1)+\frac{s}{2}} log \ a)
+\mathcal{O}\left(a^{\frac{s \eta}{2}+\frac{5}{2}(1-s-h_1)} \right)+\mathcal{O}(a^{\frac{s}{2}+\frac{s\eta}{2}+\frac{5}{2}(1-s-h_1)} log \ a)\\
&\qquad \quad+\mathcal{O}\left(a^{\frac{s \lambda}{2}+\frac{5}{2}(1-s-h_1)} \right)+\mathcal{O}(a^{\frac{s}{2}+\frac{s\lambda}{2}+\frac{5}{2}(1-s-h_1)} log \ a).
\end{aligned}
\label{est-sur-14}
\end{equation}
Since $Y_{m}-Y(z_m) $ satisfies \eqref{est-sur-14}, from the invertibility of the algebraic system \eqref{est-sur-14}, we deduce
\begin{equation}
\begin{aligned}
\sum_{m=1}^{M} \vert Y_{m}-Y(z_m) \vert &= \mathcal{O}\left(M (a^{1-h_1-t}+a^{\frac{5}{2}(1-s-h_1)+\frac{s}{2}} log \ a+a^{\frac{s \eta}{2}+\frac{5}{2}(1-s-h_1)} +a^{\frac{s}{2}+\frac{s\eta}{2}+\frac{5}{2}(1-s-h_1)} log \ a) \right)\\
&\quad +\mathcal{O}\left(M(a^{\frac{s \lambda}{2}+\frac{5}{2}(1-s-h_1)} +a^{\frac{s}{2}+\frac{s\lambda}{2}+\frac{5}{2}(1-s-h_1)} log \ a) \right).
\end{aligned}
\label{est-sur-15}
\end{equation}
Using the above estimates, we can now compare the far-field values.
Let us denote
\begin{equation}
u^{\infty}_{a}(\hat{x},\theta)=-a^{1-h_1-s}\int_{\Sigma} e^{-i\kappa_{0} \hat{x} \cdot y} K^{M}(y) \overline{\mathbf{C}} Y(y) ds(y).
\notag
\end{equation}
Therefore using \eqref{Near-resonance}, we can write
\begin{equation}
\begin{aligned}
&u^{\infty}(\hat{x},\theta)-u_{a}^{\infty}(\hat{x},\theta)\\
&=a^{1-h_1-s}\left[\int_{\Sigma} e^{-i\kappa_{0} \hat{x} \cdot y} K^{M}(y) \overline{\mathbf{C}} Y(y) ds(y)-\sum_{j=1}^{M} e^{-i \kappa_{0} \hat{x} \cdot z_{j}} \overline{\mathbf{C}} Y_{j} a^{s} \right] +\mathcal{O}(a^{2-s-h_1}+a^{3-2t-2s-2h_1})\\
&=a^{1-h_1-s}\int_{\Sigma \setminus \cup_{j=1}^{[a^{-s}]} \Sigma_j} e^{-i\kappa_{0} \hat{x} \cdot y} K^{M}(y) \overline{\mathbf{C}} Y(y) ds(y) +a^{1-h_1-s}\sum_{j=1}^{[a^{-s}]} \int_{\Sigma_j} e^{-i\kappa_{0} \hat{x} \cdot y} K^{M}(y) \overline{\mathbf{C}} Y(y) ds(y)\\
&\quad -a^{1-h_1-s}\sum_{j=1}^{M} e^{-i \kappa_{0} \hat{x} \cdot z_{j}} \overline{\mathbf{C}} Y_{j} a^{s} +\mathcal{O}(a^{2-s-h_1}+a^{3-2t-2s-2h_1}).
\end{aligned}
\notag
\end{equation}
%
%
Now using \eqref{est-sur-15} and the fact that $\vert \Sigma\setminus \cup_{j=1}^{M} \Sigma_j \vert=\mathcal{O}\left(a^\frac{s}{2} \right) $, by proceeding as in the case of volumetric distributions we can deduce
\begin{equation}
\begin{aligned}
&u^{\infty}(\hat{x},\theta)-u_{a}^{\infty}(\hat{x},\theta)\\
&= a^{1-h_1-s}\sum_{j=1}^{[a^{-s}]} K^{M}(z_j) \overline{\mathbf{C}} \int_{\Sigma_j} \left[e^{-i\kappa_{0} \hat{x} \cdot y}  Y(y)-e^{-i\kappa_{0} \hat{x} \cdot z_j}  Y(z_j)  \right] ds(y) \\
&\quad +a^{1-h_1-s}\left[\sum_{j=1}^{[a^{-s}]} \int_{\Sigma_j} e^{-i\kappa_{0} \hat{x} \cdot y} K^{M}(y) \overline{\mathbf{C}} Y(y) ds(y)-\sum_{j=1}^{[a^{-s}]} \int_{\Sigma_j} e^{-i\kappa_{0} \hat{x} \cdot y} K^{M}(z_j) \overline{\mathbf{C}} Y(y) ds(y)\right]\\
&\quad +\sum_{j=1}^{[a^{-s}]}\overline{\mathbf{C}} a^{1-h_1} \sum_{{l=1} \atop {z_{l} \in \Sigma_j}}^{[K^{M}(z_j)]} \left(e^{-i \kappa_{0} \hat{x} \cdot z_{j}} Y(z_j) -e^{-i \kappa_{0} \hat{x} \cdot z_{l}} Y(z_l)\right) \\
&\quad +\mathcal{O}\left(a^{\frac{5}{2}(1-h_{1}-s)+\frac{s}{2}}+a^{2-s-2h_{1}}+a^{3-2t-2s-2h_{1}}\right)\\
&\quad +\mathcal{O}\left(M a^{1-h_1} \left[a^{1-h_1-t}+a^{\frac{5}{2}(1-s-h_1)+\frac{s}{2}} log \ a+a^{\frac{s \eta}{2}+\frac{5}{2}(1-s-h_1)} +a^{\frac{s}{2}+\frac{s\eta}{2}+\frac{5}{2}(1-s-h_1)} log \ a\right] \right)\\
&\quad +\mathcal{O}\left(M a^{1-h_1} \left[a^{\frac{s \lambda}{2}+\frac{5}{2}(1-s-h_1)} +a^{\frac{s}{2}+\frac{s\lambda}{2}+\frac{5}{2}(1-s-h_1)} log \ a\right] \right).
\end{aligned}
\label{est-sur-16}
\end{equation}
Now proceeding as in the case of $B_2$, it can be seen that
\begin{equation}
\begin{aligned}
&\sum_{j=1}^{[a^{-s}]}\overline{\mathbf{C}} a^{1-h_1} \sum_{{l=1} \atop {z_{l} \in \Sigma_j}}^{[K^{M}(z_j)]} \left(e^{-i \kappa_{0} \hat{x} \cdot z_{j}} Y(z_j) -e^{-i \kappa_{0} \hat{x} \cdot z_{l}} Y(z_l)\right)
=\mathcal{O}(a^{1-h_1-s} a^{\frac{s \eta}{2}} a^{\frac{3}{2}(1-h_1-s)})+\mathcal{O}(a^{1-h_1-s} a^{\frac{s }{2}} a^{\frac{3}{2}(1-h_1-s)}),
\end{aligned}
\label{est-sur-17}
\end{equation}
\begin{equation}
\begin{aligned}
&a^{1-h_1-s}\sum_{j=1}^{[a^{-s}]} K^{M}(z_j) \overline{\mathbf{C}} \int_{\Sigma_j} \left[e^{-i\kappa_{0} \hat{x} \cdot y}  Y(y)-e^{-i\kappa_{0} \hat{x} \cdot z_j}  Y(z_j)  \right] ds(y) 
 =\mathcal{O}\left(a^{1-h_1-s} a^{\frac{3}{2}(1-h_1-s)} a^{\frac{s}{2}}\right)+\mathcal{O}(a^{\frac{s\eta}{2}+\frac{5}{2}(1-s-h_1)}),
\end{aligned}
\label{est-sur-18}
\end{equation}
and
\begin{equation}
\begin{aligned}
&a^{1-h_1-s}\left[\sum_{j=1}^{[a^{-s}]} \int_{\Sigma_j} e^{-i\kappa_{0} \hat{x} \cdot y} K^{M}(y) \overline{\mathbf{C}} Y(y) ds(y)-\sum_{j=1}^{[a^{-s}]} \int_{\Sigma_j} e^{-i\kappa_{0} \hat{x} \cdot y} K^{M}(z_j) \overline{\mathbf{C}} Y(y) ds(y)\right]
=\mathcal{O}\left(a^{\frac{s\lambda}{2}+\frac{5}{2}(1-s-h_1)}\right).
\end{aligned}
\label{est-sur-18a}
\end{equation}
Using \eqref{est-sur-17}-\eqref{est-sur-18a} in \eqref{est-sur-16}, we obtain
\begin{equation}
\begin{aligned}
&u^{\infty}(\hat{x},\theta)-u_{a}^{\infty}(\hat{x},\theta)\\
&=\mathcal{O}\left(a^{\frac{5}{2}(1-h_{1}-s)+\frac{s}{2}}+a^{\frac{5}{2}(1-h_{1}-s)+\frac{s\eta}{2}}+a^{\frac{5}{2}(1-h_{1}-s)+\frac{s\lambda}{2}}+a^{2-s-2h_{1}}+a^{3-2t-2s-2h_{1}} \right)\\
&\quad +\mathcal{O}\left( a^{1-h_1-s} \left[a^{1-h_1-t}+a^{\frac{5}{2}(1-s-h_1)+\frac{s}{2}} log \ a+a^{\frac{s\eta}{2}+\frac{5}{2}(1-s-h_1)}+a^{\frac{s}{2}+\frac{s\eta}{2}+\frac{5}{2}(1-s-h_1)} log \ a\right] \right)\\
&\quad +\mathcal{O}\left( a^{1-h_1-s} \left[a^{\frac{s \lambda}{2}+\frac{5}{2}(1-s-h_1)} +a^{\frac{s}{2}+\frac{s\lambda}{2}+\frac{5}{2}(1-s-h_1)} log \ a\right] \right)\\
&=\mathcal{O}\left(a^{\frac{5}{2}(1-h_{1}-s)+\frac{s}{2}}+a^{\frac{5}{2}(1-h_{1}-s)+\frac{s\lambda}{2}}+a^{2-s-2h_{1}}+a^{3-2t-2s-2h_{1}} \right)\\
&\quad +\mathcal{O}\left( a^{1-h_1-s} \left[a^{1-h_1-t}+a^{\frac{5}{2}(1-s-h_1)+\frac{s}{2}} log \ a+a^{\frac{s\lambda}{2}+\frac{5}{2}(1-s-h_1)}+a^{\frac{s}{2}+\frac{s\lambda}{2}+\frac{5}{2}(1-s-h_1)} log \ a\right] \right),
\end{aligned}
\label{est-sur-19}
\end{equation}
since we can choose $\eta $ such that $\lambda < \eta $.\\
We already know that $2-s-2h_{1}>0 $ and $3-2t-2s-2h_{1}>0 $.
\begin{itemize}
\item Note that $a^{1-h_1-s} \cdot a^{1-h_1-t}=a^{2-2h_{1}-s-t} $. Now if $h_{1}+t<\frac{1}{2} $, then
\[2-2h_{1}-s-t> 2-h_{1}-s-\frac{1}{2}=\frac{3}{2}-h_{1}-s>0, \] since we are in the regime $1<s+h_{1}<\min \{\frac{3}{2}-t, 2-h_1\} $.\\
Hence a sufficient condition, in this case, can be written as
\begin{equation}
0<1-h_{1}<s \leq 3t< \min \left\{\frac{3}{2}-t-h_{1}, 2-2h_1 \right\},\ h_1 < \frac{1}{6}. 
\label{cond-1}
\end{equation}
\item We now want conditions to guarantee $\frac{5}{2}(1-h_1-s)+\frac{s}{2}>0 $. Note that $\frac{5}{2}(1-h_1-s)+\frac{s}{2}=\frac{5}{2}-\frac{5h_{1}}{2}-2s $.\\
Now if $s<\frac{5}{4}-\frac{5h_1}{4} $, then we can guarantee $\frac{5}{2}-\frac{5h_{1}}{2}-2s>0 $.\\
Hence a sufficient condition, in this case, can be written as
\begin{equation}
0<1-h_{1}<s<\frac{5}{4}-\frac{5h_1}{4}. 
\label{cond-2}
\end{equation}

\item We next look for conditions to guarantee that $\frac{5}{2}(1-h_1-s)+\frac{s\lambda}{2} $ is greater than $0$. Note that $\frac{5}{2}(1-h_1-s)+\frac{s\lambda}{2}=\frac{5}{2}-\frac{5 h_1}{2}-\frac{5s}{2}+\frac{s \lambda}{2} $.\\
Now if $s+h_{1}<1+\frac{s\lambda}{5} $, then we can guarantee $\frac{5}{2}-\frac{5 h_1}{2}-\frac{5s}{2}+\frac{s \lambda}{2}>0 $. \\
Hence a sufficient condition, in this case, can be written as
\begin{equation}
\begin{aligned}
&1<s+h_{1}<1+\frac{s\lambda}{5}.
\end{aligned}
\label{cond-3}
\end{equation}

\item Next we deal with the term $a^{\frac{s}{2}+\frac{s\lambda}{2}+\frac{7}{2}(1-s-h_1)} log \ a $. Note that $a^{\frac{s\lambda}{2}}\ log\ a  \xrightarrow[a \rightarrow 0]{}  0 $. Therefore if $a^{\frac{s}{2}+\frac{7}{2}(1-s-h_1)}  \xrightarrow[a \rightarrow 0]{} 0 $, then $a^{\frac{s}{2}+\frac{s\lambda}{2}+\frac{7}{2}(1-s-h_1)} log \ a \xrightarrow[a \rightarrow 0]{} 0 $ hold true. Hence we need to ensure that $\frac{s}{2}+\frac{7}{2}(1-s-h_1)=\frac{7}{2}-3s-\frac{7 h_1}{2}>0 $.\\
Now if $s<\frac{7}{6}-\frac{7h_1}{6} $, then we can guarantee that $\frac{7}{2}-3s-\frac{7 h_1}{2}>0  $. \\
Hence a sufficient condition, in this case, can be written as
\begin{equation}
1-h_1<s<\frac{7}{6}-\frac{7h_1}{6}. 
\label{cond-4}
\end{equation}
\item Next we consider the term $a^{\frac{7}{2}(1-s-h_1)+\frac{s}{2}}log \ a $. Since $a^{\alpha}log \ a \xrightarrow[a \rightarrow 0]{} 0 $ for any $\alpha>0 $, it is sufficient that $\frac{7}{2}(1-h_1-s)+\frac{s}{2}>0 $. As seen in the previous case, the condition \eqref{cond-4} is sufficient for this to be true.
\item Finally we consider the term $a^{\frac{7}{2}(1-s-h_1)+\frac{s\lambda}{2}} $. Note that if $s+h_1<1+\frac{s\lambda}{7} $, then $\frac{7}{2}-\frac{7h_1}{2}-\frac{7s}{2}+\frac{s \lambda}{2} >0 $. \\
Therefore a sufficient condition, in this case, can be written as
\begin{equation}
\begin{aligned}
&1<s+h_1<1+\frac{s\lambda}{7}.
\end{aligned}
\label{cond-5}
\end{equation}
\end{itemize}
From \eqref{cond-1}-\eqref{cond-5}, we can derive the following set of sufficient conditions:
\begin{equation}
\begin{aligned}
&0<1-h_{1}<s \leq 3t< \min \left\{\frac{3}{2}-t-h_{1}, 2-2h_1 \right\},\ h_1 < \frac{1}{6}, \\
&0<1-h_1<s<\frac{7}{6}-\frac{7h_1}{6}, \\
&1<s+h_1<1+\frac{s\lambda}{7} .
\end{aligned}
\label{cond-6}
\end{equation}
Now note that if $s+h_1 $ further satisfies the condition $s+h_1<1+\frac{(1-h_1)\lambda}{7}  $, then $s+h_1<1+\frac{s\lambda}{7} $ as well. \\
Also since $\lambda \in (0,1) $, it follows that $1+\frac{(1-h_1)\lambda}{7}-h_1=\frac{7+\lambda}{7}(1-h_1)<\frac{7}{6}(1-h_1) $.\\
Therefore if $s<\frac{7+\lambda}{7}(1-h_1) $, then $s<\frac{7}{6}-\frac{7h_1}{6} $ as well. \\
Hence we can replace the set of conditions \eqref{cond-6} by the following set of sufficient conditions:
\begin{equation}
\begin{aligned}
&0<1-h_{1}<s \leq 3t< \min \left\{\frac{3}{2}-t-h_{1}, 2-2h_1 \right\},\ h_1 < \frac{1}{6}, \\
&0<1-h_1<s<\frac{7+\lambda}{7}(1-h_1) .
\end{aligned}
\label{cond-7}
\end{equation}
which can be further replaced by the condition\footnote{
Note that if $\frac{\lambda}{7+\lambda}<h_1<\frac{1}{6}  $, we have $\frac{3}{2}-t-h_1>\left(1+\frac{\lambda}{7}\right) (1-h_1), \ 2-2h_1> \left(1+\frac{\lambda}{7}\right) (1-h_1) $ and we can replace the conditions by the sufficient condition
\begin{equation}
\begin{aligned}
&0<1-h_1<s\leq 3t<\left(1+\frac{\lambda}{7}\right)(1-h_1).
\end{aligned}
\notag
\end{equation}
}
\begin{equation}
\begin{aligned}
&0<1-h_{1}<s \leq 3t< \min \left\{\frac{3}{2}-t-h_{1}, \left(1+\frac{\lambda}{7}\right)(1-h_1) \right\},\ h_1 < \frac{1}{6}.
\end{aligned}
\label{cond-8}
\end{equation}
%
%
Finally from \eqref{sur-est-2000} and \eqref{est-sur-19}, we derive
\begin{equation}
\begin{aligned}
&u^{\infty}(\hat{x},\theta)-u^{\infty}_{D}(\hat{x},\theta)\\
&=\mathcal{O}\left(a^{\frac{s+h_1-1}{2}}+a^{2-s-2h_{1}}+a^{3-2t-2s-2h_{1}} 
 + a^{2-2h_1-s-t}+a^{\frac{7}{2}(1-s-h_1)+\frac{s}{2}} log \ a \right)\\
 &\qquad +\mathcal{O}\left(a^{\frac{s\lambda}{2}+\frac{7}{2}(1-s-h_1)}+a^{\frac{s}{2}+\frac{s\lambda}{2}+\frac{7}{2}(1-s-h_1)} log \ a \right).
\end{aligned}
\label{final-sur-blow}
\end{equation}
\begin{remark}
When ($\gamma <1,\ \gamma+s=2 $) or ($\gamma=1, \gamma+s=2 $ with the frequency $\omega $ away from the Minnaert resonance), the estimates can be deduced similarly by using \eqref{Y-non-blow} instead of \eqref{Y-blow}. Also arguing similarly as in the case of volumetric distributions, we can further compare the far-fields corresponding to $\overline{\mathbf{C}} $ to that of $\overline{\mathbf{C}}_{lead} $.  In particular, when $\gamma <1,\ \gamma+s=2 $, we obtain
\begin{equation}
\begin{aligned}
u^{\infty}(\hat{x},\theta)-u^{\infty}_{lead}(\hat{x},\theta)
&=\mathcal{O}\left(a^{1-\gamma}+a^{\frac{s\eta}{2}}+a^{\frac{s\lambda}{2}}+a^{2-s}+a^{3-\gamma-2t-s}+a^{s-t}+a^{\frac{s}{2}} log\ a \right),
\end{aligned}
\label{gamma-small-sur}
\end{equation}
and when  $\gamma=1, \gamma+s=2 $ with the frequency $\omega $ away from the Minnaert resonance, we obtain
\begin{equation}
\begin{aligned}
u^{\infty}(\hat{x},\theta)-u^{\infty}_{lead}(\hat{x},\theta)
&=\mathcal{O}\left(a^{2}+a^{\frac{s\eta}{2}}+a^{\frac{s\lambda}{2}}+a^{2-s}+a^{3-\gamma-2t-s}+a^{s-t}+a^{\frac{s}{2}} log\ a \right)\\
&=\mathcal{O}\left(a^{\frac{s\eta}{2}}+a^{\frac{s\lambda}{2}}+a^{2-s}+a^{3-\gamma-2t-s}+a^{s-t}+a^{\frac{s}{2}} log\ a \right),
\end{aligned}
\label{gamma-away-sur}
\end{equation}
where 
\begin{equation}
u^{\infty}_{a}(\hat{x},\theta)=-\int_{\Sigma} e^{-i\kappa_{0} \hat{x} \cdot y} K^{M}(y) \overline{\mathbf{C}}_{lead} Y(y) ds(y).
\notag
\end{equation}
Now suppose that $\gamma=1$ and $\omega $ is near the Minnaert resonance,i.e. $1-\frac{\omega^2_M}{\omega^2}=l_M a^{h_1}$, with $l_M \neq 0$ and $h_1 \in (0, 1)$ where $s$ and $t$ satisfying the conditions \[\ s=1-h_1 \mbox{ and } \frac{s}{3} \leq t<\min\{1-h_1,\frac{1}{2}\}.\] 
Then if we use the fact that $s+h_1=1 $ in \eqref{est-sur-19}, combined with the fact that
\begin{equation}
\omega^2-\overline{\omega}_{M}^{2}=\omega^2 l_{M} a^{h_1}+ \underbrace{(\omega^2-\omega_{M}^{2})}_{\mathcal{O}(a^2)},
\notag
\end{equation}
\end{remark}
we can derive
\begin{equation}
\begin{aligned}
u^{\infty}(\hat{x},\theta)-u^{\infty}_{a}(\hat{x},\theta)
&=\mathcal{O}\left(a^{h_1}+a^{\frac{(1-h_1)\eta}{2}}+a^{\frac{(1-h_1)\lambda}{2}}+a^{1-h_1}+a^{1-2t}+a^{1-h_1-t}+a^{\frac{1-h_1}{2}} log\ a \right).
\end{aligned}
\label{gamma-near-sur}
\end{equation}
\appendix
\section{}\label{scale}
In this appendix, we derive expansions for the scattering coefficients $\mathbf{C}$ in the regimes $\gamma<1 $ and $\gamma=1 \ \text{but when the frequency is away from the resonance} $.\\
Let us recall that 
\begin{equation}
\mathbf{C}= \frac{\kappa^{2} \vert D \vert}{\frac{\rho}{\rho-\rho_0}-\frac{1}{8\pi}\kappa^{2} \hat{A}}
\notag
\end{equation}
\begin{itemize}

\item[$\gamma<1$:] 
We rewrite $ \mathbf{C}$ as 
\begin{equation}
\begin{aligned}
\mathbf{C}&= \kappa^{2} \vert D \vert \frac{1}{\frac{\rho}{\rho-\rho_0} \left[1-\frac{1}{8\pi} \kappa^{2} \hat{A} \frac{\rho-\rho_0}{\rho} \right]} \\
&=-\kappa^{2} \vert D \vert  \frac{\rho_0-\rho}{\rho \left[ 1-\underbrace{\frac{1}{8\pi} \kappa^{2} \hat{A} \frac{\rho-\rho_0}{\rho}}_{X}  \right]}
=-\kappa^{2} \vert D \vert \frac{\rho_0}{\rho} \left[1-\frac{\rho}{\rho_0} \right] \left[1-X\right]^{-1}.
\end{aligned}
\notag
\end{equation}
Note that 
\begin{equation}
X=\frac{1}{8\pi} \kappa^{2} \hat{A} \frac{\rho-\rho_0}{\rho}= \underbrace{\frac{1}{8 \pi} \kappa^{2} \hat{A}}_{\mathcal{O}(a^2)} -  \underbrace{\frac{1}{8 \pi} \kappa^{2} \hat{A} \frac{\rho_0}{\rho}}_{\mathcal{O}(a^2 \cdot a^{-1-\gamma})} = \mathcal{O}(a^{1-\gamma}).
\notag
\end{equation}
Therefore
\begin{equation}
\begin{aligned}
\mathbf{C}&= -\kappa^{2} \vert D \vert \frac{\rho_0}{\rho} \left[1-\frac{\rho}{\rho_0} \right] \left[1+X+X^{2}+ \dots \right]\\
& =-\kappa^{2} \vert D \vert \frac{\rho_0}{\rho} \left[1-\frac{\rho}{\rho_0} \right]  -\kappa^{2} \vert D \vert \frac{\rho_0}{\rho} \left[1-\frac{\rho}{\rho_0} \right] X -\kappa^{2} \vert D \vert \frac{\rho_0}{\rho} \left[1-\frac{\rho}{\rho_0} \right] X^{2}+\dots \\
&= \underbrace{ -\kappa^{2} \vert D \vert \frac{\rho_0}{\rho}}_{\mathcal{O}(a^{2-\gamma})}-\underbrace{ \kappa^{2} \vert D \vert \frac{\rho_0}{\rho} X}_{\mathcal{O}(a^{1-\gamma} \cdot a^{2-\gamma})}+\underbrace{ \kappa^{2} \vert D \vert}_{\mathcal{O}(a^{1+\gamma} \cdot a^{2-\gamma})}+\underbrace{ \kappa^{2} \vert D \vert X}_{\mathcal{O}(a^{2} \cdot a^{2-\gamma})}+\dots\\
&= -\kappa^{2} \vert D  \vert \frac{\rho_0}{\rho} +\mathcal{O}\left(a^{1-\gamma} \cdot a^{2-\gamma} \right).
\end{aligned}
\notag
\end{equation}
\item[$\gamma=1$ ](and away from resonance):
Proceeding as in the earlier case, we write
\begin{equation}
\begin{aligned}
\mathbf{C}&= \kappa^{2} \vert D \vert \frac{1}{\frac{\rho}{\rho-\rho_0} \left[1-\frac{1}{8\pi} \kappa^{2} \hat{A} \frac{\rho-\rho_0}{\rho} \right]} 
=-\kappa^{2} \vert D \vert  \frac{\rho_0-\rho}{\rho \left[ 1-\underbrace{\frac{1}{8\pi} \kappa^{2} \hat{A} \frac{\rho-\rho_0}{\rho}}_{X}  \right]}.
\end{aligned}
\notag
\end{equation}
Now we can write $1-X $ as 
\begin{equation}
\begin{aligned}
1-X&=1+\underbrace{\frac{1}{8\pi} \kappa^{2} \hat{A} \frac{\rho_0}{\rho}}_{\sim 1} -\underbrace{\frac{1}{8\pi} \kappa^{2} \hat{A} }_{\mathcal{O}(a^2)} \\
&=\underbrace{\left[1+\frac{1}{8\pi} \kappa^{2} \hat{A} \frac{\rho_0}{\rho} \right]}_{X_1} \left[ 1-\underbrace{\frac{\frac{1}{8\pi} \kappa^{2} \hat{A}}{1+\frac{1}{8\pi} \kappa^{2} \hat{A} \frac{\rho_0}{\rho}}}_{X_2} \right]
\end{aligned}
\notag
\end{equation}
which implies 
\begin{equation}
\left[1-X \right]^{-1}=X_{1}^{-1} \left[1-X_2 \right]^{-1},\ \text{where}\ X_1\sim 1, X_2=\mathcal{O}(a^2) .
\notag
\end{equation}
Therefore
\begin{equation}
\begin{aligned}
\mathbf{C}&= -\kappa^{2} \vert D \vert \frac{\rho_0}{\rho} \left[1-\frac{\rho}{\rho_0} \right] X_{1}^{-1} \left[1-X_2 \right]^{-1}\\
&= -\kappa^{2} \vert D \vert \frac{\rho_0}{\rho} \left[1-\frac{\rho}{\rho_0} \right] X_{1}^{-1} \left[1+X_{2}+X_{2}^{2}+ \dots \right]\\
& =-\kappa^{2} \vert D \vert \frac{\rho_0}{\rho} \left[1-\frac{\rho}{\rho_0} \right] X_{1}^{-1} -\kappa^{2} \vert D \vert \frac{\rho_0}{\rho} \left[1-\frac{\rho}{\rho_0} \right] X_{1}^{-1} X_{2} -\kappa^{2} \vert D \vert \frac{\rho_0}{\rho} \left[1-\frac{\rho}{\rho_0} \right] X_{1}^{-1} X_{2}^{2}+\dots \\
&= \underbrace{ -\kappa^{2} \vert D \vert \frac{\rho_0}{\rho} X_{1}^{-1}}_{\mathcal{O}(a)}-\underbrace{ \kappa^{2} \vert D \vert \frac{\rho_0}{\rho} X_{1}^{-1} X_2}_{\mathcal{O}(a \cdot a^{2})}+\underbrace{ \kappa^{2} \vert D \vert X_{1}^{-1}}_{\mathcal{O}(a^{3}) }+\underbrace{ \kappa^{2} \vert D \vert X_{1}^{-1} X_2}_{\mathcal{O}(a^{3} \cdot a^{2})}+\dots\\
&= -\kappa^{2} \vert D \vert \frac{\rho_0}{\rho} X_{1}^{-1}+\mathcal{O}\left(a^{3}  \right).
\end{aligned}
\notag
\end{equation}
\end{itemize}

\section{}\label{Sing}
In this appendix, we outline a proof for the invertibility of the single layer potential $\mathbf{S}_{\kappa_0} $ when restricted to functions defined in an open subset $\Sigma$ of $\Gamma $, where $\Gamma=\partial D  $ for some open connected subset $D$ of  $\mathbb{R}^{3}$.\\
Let us recall that
\begin{equation}
\begin{aligned}
H^{s}(\Sigma)&:=\{f \vert_{\Sigma} : f \in H^{s}(\Gamma) \} , \ H^{s}_{\overline{\Sigma}}(\Gamma):=\{f \in H^{s}(\Gamma): \text{supp}\ f \subseteq \overline{\Sigma} \} , \\
H^{-s}_{\Sigma}(\Gamma)&:=\{\phi \in H^{-s}(\Gamma): \langle \phi, \psi \rangle_{-s,s}=0,\ \text{for any}\  \psi \in H^{s}_{\Gamma \setminus \Sigma}(\Gamma)\} . 
\end{aligned}
\notag
\end{equation}
It can be seen that
\begin{equation}
\left(H^{s}(\Sigma) \right)' \simeq H_{\overline{\Sigma}}^{-s}(\Gamma), \ \left(H^{s}_{\overline{\Sigma}}(\Gamma) \right)' \simeq H^{-s}(\Sigma) .
\notag
\end{equation}
The following property for $\mathbf{S}_{\kappa_0}\Big\vert_{\Sigma} $ has been proved in (theorem $2.4$, \cite{CS}):
\begin{equation}
\mathbf{S}_{\kappa_0}\Big\vert_{\Sigma} \phi \in H^{s+1}(\Sigma) \Longleftrightarrow \phi \in H^{s}_{\overline{\Sigma}}(\Gamma), \ -1<s<0. 
\label{res-1}
\end{equation}
Using \eqref{res-1}, we next study the invertibility of the single layer potential when restricted to $H_{\Sigma}^{-1}(\Gamma) $.
\begin{theorem}\label{Sing-inv}
Let $\Sigma$ be an open subset of $\Gamma $, where $\Gamma=\partial D  $ for some open connected subset $D$ of  $\mathbb{R}^{3}$. Assume that $\kappa_{0}^{2} $ is not an eigenvalue for the Dirichlet Laplacian in $D$. Then the mapping
\begin{equation}
\mathbf{S}_{\kappa_0}\Big\vert_{\Sigma}: H_{\Sigma}^{-1}(\Gamma) \rightarrow L^2(\Sigma) 
\notag
\end{equation}
is invertible.
\end{theorem}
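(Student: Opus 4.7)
The plan is to establish boundedness, injectivity, and surjectivity separately, combining the characterization (\ref{res-1}) from \cite{CS} with the Rellich uniqueness arguments already deployed elsewhere in the paper. Boundedness is immediate from (\ref{e107}) at $s=0$ (giving $\mathbf{S}_{\kappa_0}\colon H^{-1}(\Gamma)\to L^2(\Gamma)$) followed by the continuous restriction $L^2(\Gamma)\to L^2(\Sigma)$, since the inclusion $H_\Sigma^{-1}(\Gamma)\hookrightarrow H^{-1}(\Gamma)$ is continuous by definition.

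For injectivity, I would take $\phi\in H_\Sigma^{-1}(\Gamma)$ with $\mathbf{S}_{\kappa_0}|_\Sigma\phi=0$ and set $u:=\mathbf{S}_{\kappa_0}\phi$ on $\mathbb{R}^3$. Then $u$ solves the Helmholtz equation away from $\Gamma$, satisfies the Sommerfeld condition, is continuous across $\Gamma$ (so in particular has a common trace $u|_\Gamma\in L^2(\Gamma)$ vanishing on $\Sigma$), and $[\partial_\nu u]_\Gamma=-\phi$. The annihilation property defining $H_\Sigma^{-1}(\Gamma)$ forces $\phi$ to be supported in $\overline\Sigma$, so $[\partial_\nu u]_{\Gamma\setminus\overline\Sigma}=0$ and $u$ is in fact a classical Helmholtz solution in a neighbourhood of $\Gamma\setminus\overline\Sigma$. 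When $\Sigma\subsetneq\Gamma$, the set $\mathbb{R}^3\setminus\overline\Sigma$ is open and connected; an integration by parts on $B_R\setminus\overline\Sigma$ (justified by approximating $\overline\Sigma$ from outside so $u$ is $H^1$ on the approximating domains, and using $u|_\Sigma=0$ to pass to the limit) kills the $\Sigma$-boundary terms, and taking imaginary parts of the surviving $\partial B_R$ contribution together with the Sommerfeld condition and Rellich's lemma yields $u\equiv 0$ outside a large ball; analytic unique continuation propagates this to all of $\mathbb{R}^3\setminus\overline\Sigma$, whence $\phi=-[\partial_\nu u]_\Gamma=0$. When instead $\Sigma=\Gamma=\partial D$, the restriction $u|_D$ solves the interior Dirichlet Helmholtz problem with zero boundary data; the no-Dirichlet-eigenvalue hypothesis gives $u\equiv 0$ in $D$, and uniqueness of the radiating exterior Dirichlet problem gives $u\equiv 0$ in $\mathbb{R}^3\setminus\overline D$, so again $\phi=0$.

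Surjectivity will follow from a Fredholm argument: decompose $\mathbf{S}_{\kappa_0}|_\Sigma=\mathbf{S}_0|_\Sigma+(\mathbf{S}_{\kappa_0}-\mathbf{S}_0)|_\Sigma$, where the difference is smoothing by the argument of Lemma \ref{map-prop} and therefore compact $H_\Sigma^{-1}(\Gamma)\to L^2(\Sigma)$, while the static operator $\mathbf{S}_0|_\Sigma\colon H_\Sigma^{-1}(\Gamma)\to L^2(\Sigma)$ is an isomorphism (this being the $s\to -1^+$ endpoint of (\ref{res-1}), combined with the classical coercivity of $\mathbf{S}_0$ on $H^{-1/2}_{\overline\Sigma}(\Gamma)$ and the duality identification $(H^1(\Sigma))'\simeq H_\Sigma^{-1}(\Gamma)$). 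Hence $\mathbf{S}_{\kappa_0}|_\Sigma$ is Fredholm of index zero, and the injectivity just established promotes it to an isomorphism by the open mapping theorem. The main technical obstacle is the endpoint $s=-1$ in (\ref{res-1}), which is stated only for $-1<s<0$: passing to the limiting regularity requires uniform control of the constants, an interpolation/duality argument, and care in interpreting the trace of $u=\mathbf{S}_{\kappa_0}\phi$ on $\Sigma$ when $\phi$ is only $H^{-1}$ so that $u$ is a priori merely $H^{1/2}_{loc}$ near $\overline\Sigma$; this is where the cleanest step is to first prove the analogous isomorphism at $s=-\tfrac12$ (where standard screen theory applies), then extend by density using the uniform bounds extracted from (\ref{res-1}).
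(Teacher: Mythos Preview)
Your route and the paper's are genuinely different, and the gap you yourself flag is real. For surjectivity you reduce to showing that $\mathbf{S}_0|_\Sigma:H_\Sigma^{-1}(\Gamma)\to L^2(\Sigma)$ is an isomorphism, but this is essentially the theorem you are trying to prove (at $\kappa_0=0$): the coercivity of $\mathbf{S}_0$ only gives the isomorphism $H^{-1/2}_{\overline\Sigma}(\Gamma)\to H^{1/2}(\Sigma)$, and the Costabel--Stephan characterization (\ref{res-1}) is stated for $-1<s<0$ and for $\mathbf{S}_{\kappa_0}$, not at the endpoint $s=-1$. Your proposed fix (prove at $s=-\tfrac12$, then pass to $s=-1$ by density using ``uniform bounds extracted from (\ref{res-1})'') is not convincing as written, because (\ref{res-1}) is a regularity equivalence, not a norm estimate with constants tracked in $s$; you would need an independent argument to bound the inverse uniformly as $s\to -1^+$.

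The paper sidesteps this entirely. It first assumes $\kappa_0^2$ is not a Neumann eigenvalue for $D$ and, given $g\in L^2(\Sigma)$, \emph{constructs} a preimage: extend $g$ by zero to $\tilde g$ on $\Gamma$, solve $(-\tfrac12 I+\mathbf{K}_{\kappa_0})f=\tilde g$, set $v=\mathbf{K}_{\kappa_0}f$, then solve $(-\tfrac12 I+\mathbf{K}_{\kappa_0}^*)h=\partial_\nu v$ and set $w=\mathbf{S}_{\kappa_0}h$; the Neumann hypothesis forces $v=w$, so $\mathbf{S}_{\kappa_0}h=\tilde g$. For $g\in H^s(\Sigma)$ with $0<s<\tfrac12$ one then invokes (\ref{res-1}) in its \emph{open} range to conclude $\mathrm{supp}\,h\subset\overline\Sigma$, and a density argument (together with the global invertibility $\mathbf{S}_{\kappa_0}:H^{-1}(\Gamma)\to L^2(\Gamma)$) extends this to all of $L^2(\Sigma)$. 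Only afterwards is a Fredholm argument used, and then to compare $\mathbf{S}_{\kappa_0}|_\Sigma$ with $\mathbf{S}_k|_\Sigma$ for some $k$ that is neither a Dirichlet nor a Neumann eigenvalue---for which the isomorphism has just been established---thereby covering the case $\kappa_0^2=\omega_{N,D}$. Your injectivity argument via Rellich and unique continuation is in fact more complete than the paper's (which simply cites global invertibility of $\mathbf{S}_{\kappa_0}$ on $\Gamma$ without explaining why $\mathbf{S}_{\kappa_0}\phi$ vanishes on $\Gamma\setminus\overline\Sigma$), but for surjectivity the paper's constructive route is what makes the proof close.
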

\begin{proof}
We first prove the invertibility in the case when $\kappa_{0}^{2} \neq \omega_{N,D} $, where $\omega_{N,D} $ is an eigenvalue for the Neumann Laplacian in $ D $.
\begin{itemize}
\item {\textit{Surjectivity}}: Let $g \in L^{2}(\Sigma) $ and we define \[\tilde{g}:= \begin{cases}
g, \ \text{in}\ \Sigma,\\
0, \ \text{in}\ \Gamma \setminus \overline{\Sigma}.
\end{cases} \] 
Also let $f \in L^{2}(\Gamma) $ be such that $\left(-\frac{1}{2}Id+\mathbf{K}_{\kappa_0} \right)f=\tilde{g} $, and we define \[v :=\mathbf{K}_{\kappa_0}f. \]
Then the function $v$ satisfies $\left(\Delta+\kappa_{0}^{2} \right)v=0 \ \text{in}\ D $. Also since $f \in L^{2}(\Gamma) $, it follows that $\frac{\partial v}{\partial \nu}\Big\vert_{\Gamma} \in H^{-1}(\Gamma) $.\\
Now let $h \in H^{-1}(\Gamma) $ be such that $\left(-\frac{1}{2}Id+\mathbf{K}_{\kappa_0}^{*} \right)h=\frac{\partial v}{\partial \nu} $ and we set \[w:=\mathbf{S}_{\kappa_0}h. \]
Then the function $v-w$ satisfies
\begin{equation}
\begin{aligned}
\left(\Delta+\kappa_{0}^{2} \right)(v-w)&=0, \ \text{in}\ D,\\
\frac{\partial (v-w)}{\partial \nu}&=0, \ \text{on}\ \partial D.
\end{aligned}
\notag
\end{equation}
Since $\kappa_{0}^{2} \neq \omega_{N,D} $, it follows that $v-w=0 $ and hence $\tilde{g}=\mathbf{S}_{\kappa_0}h $.\\
Let us further assume that $g \in  H^{s}(\Sigma), \ 0<s<\frac{1}{2}$. Then from \eqref{res-1} and the fact that $\tilde{g}=\mathbf{S}_{\kappa_0}h $, we can conclude that $\text{supp}(h)\subset \overline{\Sigma} $ since $\text{supp}(\tilde{g})\subset \overline{\Sigma} $.\\
By a density argument and using the invertibility of $\mathbf{S}_{\kappa_0}: H^{-1}(\Gamma)\rightarrow L^{2}(\Gamma) $, we can now deduce that $L^{2}(\Sigma) \subset \mathbf{S}_{\kappa_0}\big\vert_{\Sigma}\left(H^{-1}_{\Sigma}(\Gamma) \right) $, that is, $\mathbf{S}_{\kappa_0}\Big\vert_{\Sigma} $ is surjective.
\item {\textit{Injectivity}}: Let $\mathbf{S}_{\kappa_0}\Big\vert_{\Sigma} f=0 $ for $f \in H_{\Sigma}^{-1}(\Gamma) $.\\
Now by definition, $f \in H_{\Sigma}^{-1}(\Gamma) $ implies that $f \in H^{-1}(\Gamma) $.
Therefore using the fact that the mapping $\mathbf{S}_{\kappa_0}: H^{-1}(\Gamma) \rightarrow L^{2}(\Gamma)  $ is invertible, we immediately have $f =0 $ whence injectivity follows.
\end{itemize}
Hence if $\kappa_{0}^{2} \neq \omega_{N,D} $, the above argument shows that 
\[\mathbf{S}_{\kappa_0}\Big\vert_{\Sigma}: H_{\Sigma}^{-1}(\Gamma) \rightarrow L^2(\Sigma) \]
is a bijection. Since it is continuous, it follows that $ \mathbf{S}_{\kappa_0}\Big\vert_{\Sigma}$ is an isomorphism. \\
Now if $\kappa_{0}^{2} = \omega_{N,D} $, we write 
\[\mathbf{S}_{\kappa_0}\Big\vert_{\Sigma}=\mathbf{S}_{k}\Big\vert_{\Sigma}+\left(\mathbf{S}_{\kappa_0}\Big\vert_{\Sigma}-\mathbf{S}_{k}\Big\vert_{\Sigma} \right), \]
where $k^2 $ is not an eigenvalue for the Neumann or Dirichlet Laplacian in $D $. From the previous step, we know that $\mathbf{S}_{k}\Big\vert_{\Sigma} $ is an isomorphism. Also it is easy to see that $\mathbf{S}_{\kappa_0}\Big\vert_{\Sigma}-\mathbf{S}_{k}\Big\vert_{\Sigma}  $ is compact. Therefore $\mathbf{S}_{\kappa_0}\Big\vert_{\Sigma} $ is Fredholm with index zero. The injectivity (and hence invertibility) of $\mathbf{S}_{\kappa_0}\Big\vert_{\Sigma} $ follows as in the earlier case as the proof holds for any $\kappa_{0}^2 $ which is not an eigenvalue for the Dirichlet Laplacian in $D$.   
\end{proof}

\end{document}